\newcommand\blfootnote[1]{%
  \begingroup
  \renewcommand\thefootnote{}\footnote{#1}%
  \addtocounter{footnote}{-1}%
  \endgroup
}
\def\tr{\mathop{\text{tr}}\kern.2ex}
\def\rmH{\mathrm{H}}
\def\rmA{\mathrm{A}}
\def\rmI{\mathrm{I}}
\def\rmv{\mathrm{v}}
\def\cL{\mathcal{L}}
\def\sign{\mathop{\text{sign}}}
\def\supp{\mathop{\text{supp}}}
\long\def\comment#1{}
\def\tr{\mathop{\text{Tr}}}
\def\cS{{\mathcal{S}}}
\def\dr{\displaystyle \rm}
\newcommand{\bel}{\begin{eqnarray}\label}
\newcommand{\eel}{\end{eqnarray}}
\newcommand{\bes}{\begin{eqnarray*}}
\newcommand{\ees}{\end{eqnarray*}}
\let\emptyset\varnothing
\let\hat\widehat
\let\tilde\widetilde
\def\EE{{\mathbb E}}
\def\supp{\mathop{\text{supp}\kern.2ex}}
\def\tr{{\rm{Tr}}}
\def\sign{\mathrm{sign}}
\def\supp{\mathop{\text{supp}}}
\def\tr{\mathrm{Tr}}
\def \mA {\mathscr{A}}
\def \mG {\mathscr{G}}
\def\##1\#{\begin{align}#1\end{align}}
\def\$#1\${\begin{align*}#1\end{align*}}
\theoremstyle{plain}
\theoremstyle{mytheoremstyle}
\begin{document}

\title{\huge High-Temperature Structure Detection in Ferromagnets}

\author{
Yuan Cao\thanks{Department of Computer Science
University of California, Los Angeles, CA 90095, USA; e-mail: {\tt yuancao@cs.ucla.edu}}~~~~
Matey Neykov\thanks{Department of Statistics \& Data Science, Carnegie Mellon University,
Pittsburgh, PA 15213, USA; e-mail: {\tt mneykov@stat.cmu.edu}}~~~~Han Liu\thanks{Department of Electrical Engineering and Computer Science, Northwestern University, Evanston, IL 60208, USA; e-mail: {\tt 	
hanliu.cmu@gmail.com}}
}

\maketitle

\begin{abstract}
{This paper studies structure detection problems in high temperature ferromagnetic  (positive interaction only) Ising models. The goal is to distinguish whether the underlying graph is empty, i.e., the model consists of independent Rademacher variables, versus the alternative that the underlying graph contains a subgraph of a certain structure. We give matching upper and lower minimax bounds under which testing this problem is possible/impossible respectively. Our results reveal that a key quantity called graph arboricity drives the testability of the problem. On the computational front, under a conjecture of the computational hardness of sparse principal component analysis, we prove that, unless the signal is strong enough, there are no polynomial time tests which are capable of testing this problem. In order to prove this result we exhibit a way to give sharp inequalities for the even moments of sums of i.i.d. Rademacher random variables which may be of independent interest. \blfootnote{Published in \textit{Information and Inference: A Journal of the IMA.}}}
\end{abstract}

\section{Introduction}\label{section:introduction}


Graphical models are a powerful tool in high dimensional statistical inference. The graph structure of a graphical model gives a simple way to visualize the dependency among the variables in multivariate random vectors. The analysis of graph structures plays a fundamental role in a wide variety of applications, including information retrieval, bioinformatics, image processing and social networks \citep{besag1993statistical, durbin1998biological,wasserman1994social, grabowski2006ising}. Motivated by these applications, theoretical results on graph estimation \citep{meinshausen2006high,liu2009nonparanormal,bento2009graphical,ravikumar2011high,cai2011constrained}, single edge inference \citep{jankova2015confidence,ren2015asymptotic,neykov2015unified,gu2015local} and combinatorial inference \citep{neykov2016combinatorial, neykov2017property} have been studied in the literature. 

In this paper we are concerned with the distinct problem of \textit{structure detection}. In structure detection problems one is interested in testing whether the underlying graph is empty, (i.e., the random variables are independent) versus the alternative that the graph contains a subgraph of a certain structure.  A variety of detection problems have been previously considered in the literature \citep[see for example][]{addario2010combinatorial,arias2012detection,arias2015detectinga,arias2015detectingb}. These works mainly focus on covariance or precision matrix detection problems and establish minimax lower and upper bounds. 

While covariance and precision matrix detection problems are inherently related to the Gaussian graphical model, in this paper we focus on  detection problems under the \textit{zero-field ferromagnetic Ising model}. The Ising model is a probability model for binary data originally developed in statistical mechanics \citep{ising1925beitrag} and has wide range of modern applications including image processing \citep{geman1984stochastic}, social networks and bioinformatics \citep{ahmed2009recovering}. Below we formally introduce the model and problems of interest.\\

\noindent\textbf{Zero-field ferromagnetic Ising model}.
Under a zero-field Ising model, the binary vector $\bX\in \{\pm 1\}^d$ follows a distribution with probability mass function given by
\begin{align*}
\PP_{\Theta}(\bX) = \frac{1}{Z_{\Theta}}\exp\Bigg( \sum_{i,j = 1}^d \theta_{ij}X_i X_j \Bigg),
\end{align*}
where $\Theta = (\theta_{ij})_{d\times d}$ is a symmetric interaction matrix with zero diagonal entries and $Z_{\Theta}$ is the partition function defined as
\begin{align*}
Z_{\Theta} = \sum_{\bX\in \{\pm 1\}^d} \exp\Bigg( \sum_{i,j=1}^d \theta_{ij}X_i X_j  \Bigg).
\end{align*}
The non-zero elements of the symmetric matrix $\Theta$ specify a graph $G(\Theta) = G = (\overline{V},E)$ with vertex set $\overline{V}=\{1,\ldots,d\}$ and edge set $E=\{(i,j):\theta_{ij}\neq 0\}$. We will refer to the graph $G(\Theta)$ as $G$ whenever it is clear what the underlying matrix $\Theta$ is. It is not hard to check that by the definition of $G$, the vector $\bX$ is Markov with respect to $G$, that is, each two elements $X_i$ and $X_j$ are independent given the remaining values of $\bX_{-(i,j)}$ if and only if $(i,j) \not \in E$. 

Here, the term \textit{zero-field} specifies that there is no external magnetic field affecting the system, meaning that the energy function $\sum_{i,j=1}^d \theta_{ij}X_i X_j$ consists purely the terms of degree $2$ (i.e., there are no main effects). In this paper, we further focus on zero-field \textit{ferromagnetic} models, where we also assume that $\theta_{ij}\geq 0$, $i,j \in \{1,\ldots,d\}$. In addition, our analysis is under the high-temperature setting, where the magnitudes of $\theta_{ij}$'s are under a certain level. More specifically, throughout this paper we assume that $\|\Theta \|_F \leq \frac{1}{2}$, where $\|\Theta \|_F = \bigr[\sum_{i,j = 1}^d \theta^2_{ij}\bigr]^{1/2}$ is the Frobenius norm of $\Theta$. \\

\noindent\textbf{Structure detection problems}.
As described in the previous paragraph, a zero-field ferromagnetic Ising model specifies a graph $G=(\overline{V},E)$.
In a structure detection problem, we are interested in testing whether the underlying graph $G$ is an empty graph versus the alternative that $G$ belongs to a set of graphs with a certain structure. Specifically, let $G_\varnothing = (\overline{V},\emptyset)$ be the empty graph, and let $\cG_1$ be a class of graphs not containing $G_\varnothing$. The following hypothesis testing problem is an example of a detection problem. Given a sample of $n$ independent observations $\bX_1,\ldots,\bX_n\in \RR^d$ from a zero-field ferromagnetic Ising model we aim to test
\begin{align}\label{eq:testdef}
\rmH_0: G = G_\varnothing ~~\text{ versus }~~\rmH_1:G\in \cG_1.
\end{align}

The term ``detection'' here is used in the sense that if one rejects the null hypothesis, the presence of a non-null graph has been detected. 
In \eqref{eq:testdef} the graph class $\cG_1$ can be arbitrary, which makes the hypothesis testing problem \eqref{eq:testdef} a very general problem. We now give a specific instance of this problem which is of  particular importance. Let $G_*$ be a fixed graph with $s=o(\sqrt{d})$\footnote{For two positive sequences $a_n$ and $b_n$ we write $a_n = o(b_n)$ if $\lim_{n\rightarrow \infty} a_n/b_n = 0.$} 
non-isolated vertices which represents some specific graph structure. The structure detection problem that considers all possible ``positions'' of $G_*$ is of the following form:
\begin{align}\label{eq:detectiontestdef}
\rmH_0: G = G_\varnothing  \text{ versus }\rmH_1:G\in \cG_1(G_*),
\end{align}
where $\cG_1(G_*)$ is the class of all graphs that contain a subgraph isomorphic to $G_*$.


While problems \eqref{eq:testdef} and \eqref{eq:detectiontestdef} give a good intuition what a detection problem is, in order to facilitate testing we need to impose certain assumptions on the matrix $\Theta$, as otherwise even with graphs vastly different from the empty graph there might not be enough ``separation'' between the null and the alternative hypothesis. Since the underlying graph $G$ is specified by the matrix $\Theta$, we can reformulate problems \eqref{eq:testdef} and \eqref{eq:detectiontestdef} into testing problems on $\Theta$. Given a class of graphs $\cG_1$, we define the corresponding parameter space with minimum signal strength $\theta>0$ as
\begin{align}\label{def:cS:cG1}
\cS(\cG_1,\theta) = \Big\{ \Theta= (\theta_{ij})_{d\times d}: \Theta = \Theta^T, G(\Theta)\in \cG_1, 
\|\Theta \|_F \leq 1/2, \min_{(i,j)\in E[G(\Theta)]} \theta_{ij}\geq \theta \Big\}.
\end{align}
We now reformulate the hypothesis testing problems \eqref{eq:testdef} and \eqref{eq:detectiontestdef} as follows:
\begin{align}
&\rmH_0: \Theta = \mathbf{0} ~~\text{ versus }~~\rmH_1:\Theta\in \cS(\cG_1,\theta),\label{eq:testdefS1}\\
&\rmH_0: \Theta = \mathbf{0} ~~\text{ versus }~~\rmH_1:\Theta\in \cS[\cG_1(G_*),\theta].\label{eq:testdefS2}
\end{align}
The results of our paper cover the following examples.\\

\noindent \textbf{Empty graph versus non-empty graph.} We consider testing whether the underlying graph of the Ising model is empty or not. Clearly, since our null hypothesis is that the graph is empty, this is a detection problem. We have $\cG_1 = \{G: E(G)\neq \emptyset\}$.\\

\noindent \textbf{Clique detection.} A clique is a set of vertices such that every two distinct vertices are adjacent. We consider detecting graphs that contain a clique of size $s$. We have $\cG_1 = \{ G=(\overline{V},E): \exists V\subseteq \overline{V} \text{ such that } |V| = s\text{ and } (i,j)\in E \text{ for all }i,j\in V\}$. This is a more general version of the previous example, since one can think of a non-empty graph as a graph containing a clique of size $s = 2$. \\

\noindent \textbf{Star detection.} A star is a tree in which all leaves are connected to the same node. We consider detecting graphs that contain an $s-1$ star. In this example, we have $\cG_1 = \{ G=(\overline{V},E):$ there exist distinct  $i_0,i_1,\ldots, i_{s-1}\in \overline{V} \text{ such that } (i_0,i_1),(i_0,i_2),\ldots,(i_0,i_{s-1})\in E \}$.\\

\noindent \textbf{Community structure detection.} In this example we consider a class of graphs with more complex structure. Let $k$ and $l$ be positive integers. A community $\cC$ is represented by a $k$-clique, which means that every two members in the same community are connected. For a community $\cC$, we select one fixed representative vertex and denote it as $v(\cC)$. We consider the class of graphs $\cG_1$ that contains graphs with at least $l$ disjoint communities, such that for every two different communities $\cC$ and $\cC'$, there exists an edge connecting $v(\cC)$ and $v(\cC')$. In this example we set $s =k l$.\\

All of the above examples are of the type \eqref{eq:testdefS2}. We show examples of these detection problems in Figure~\ref{fig:prop-example}. In the following section we outline the main contributions of our work. 
\begin{figure}[t]
	\begin{center}
		\begin{tabular}{cccc}
			\includegraphics[width=.15\textwidth,angle=0]{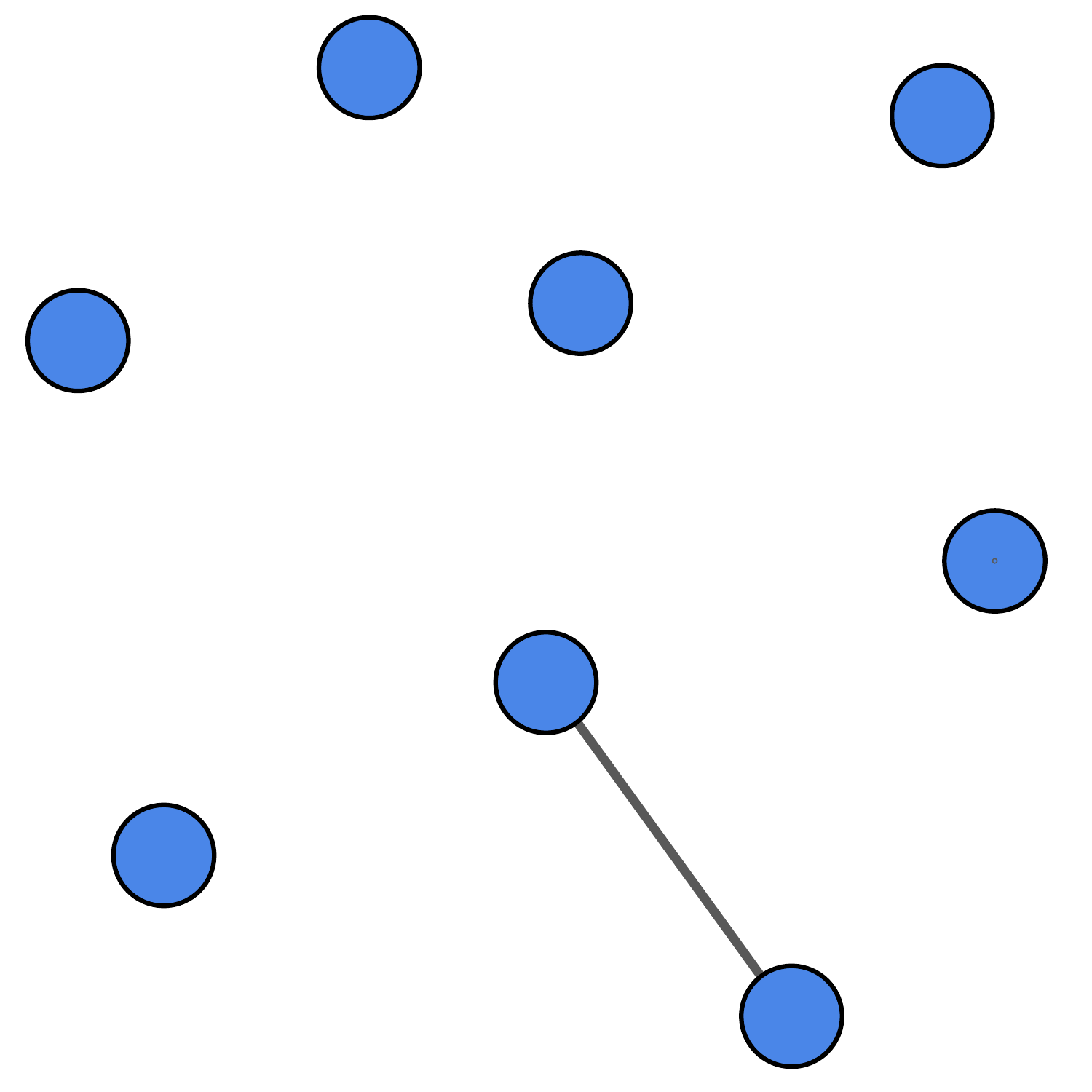}
			&  
			\includegraphics[width=.15\textwidth,angle=0]{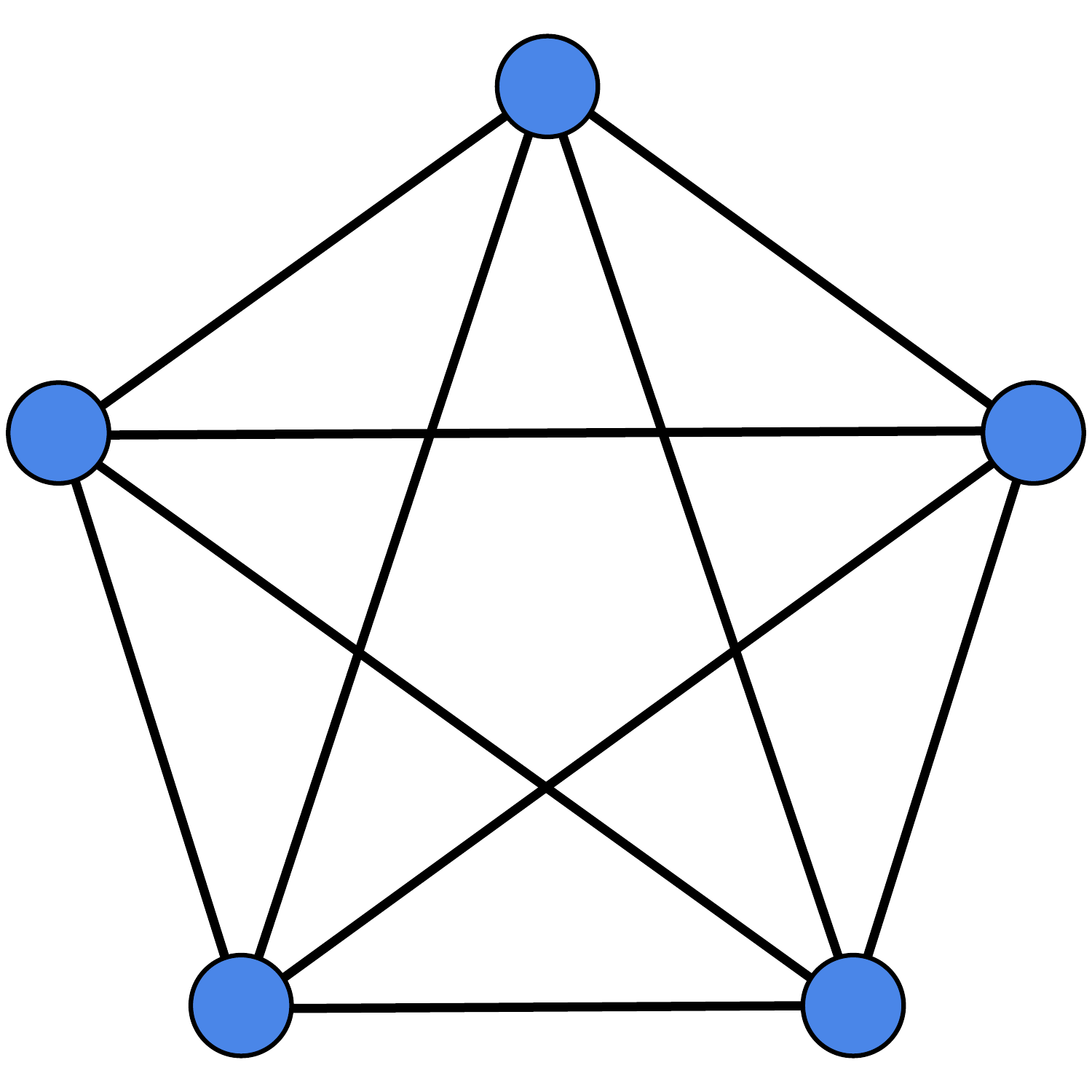}
			& \includegraphics[width=.15\textwidth,angle=0]{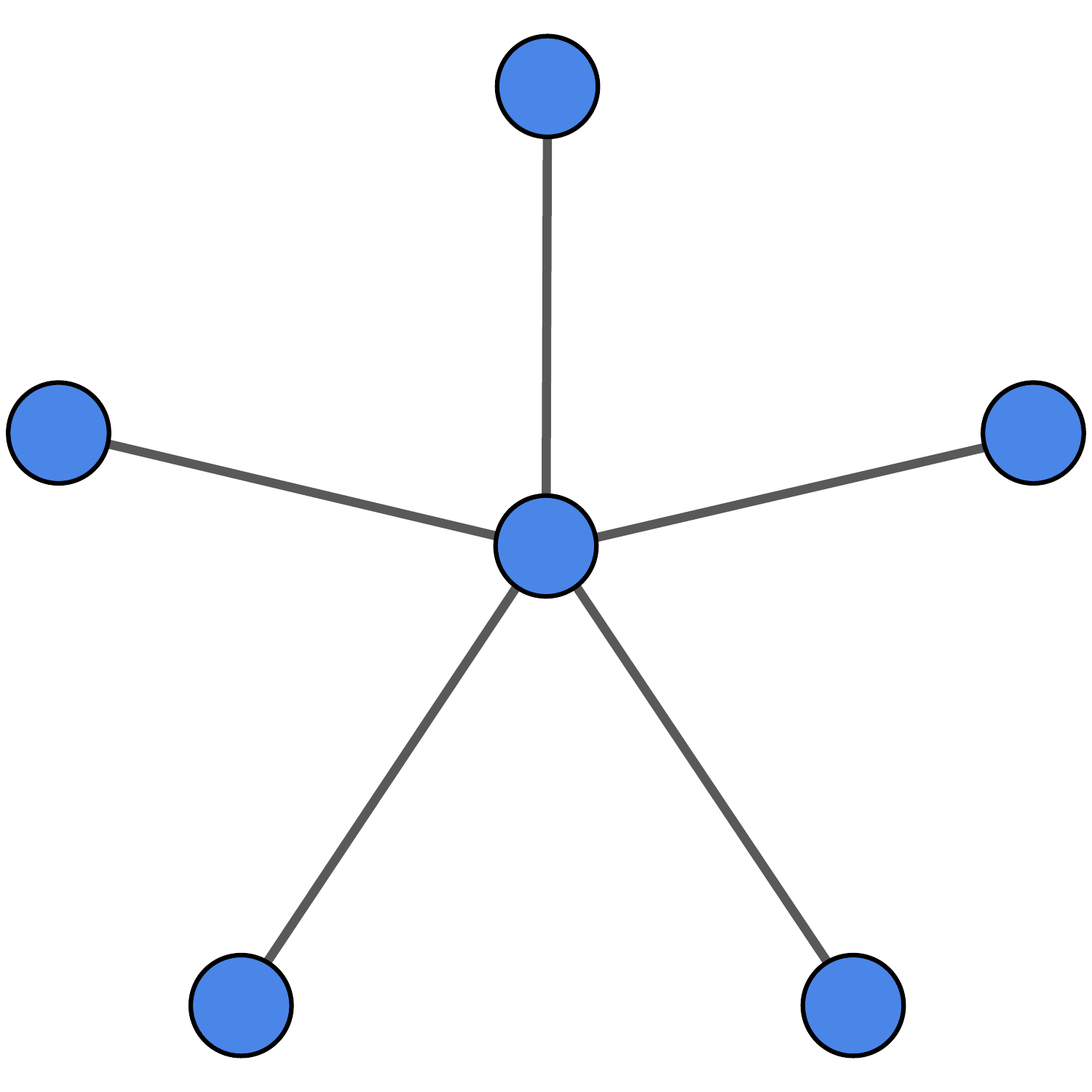}
			& \includegraphics[width=.15\textwidth,angle=0]{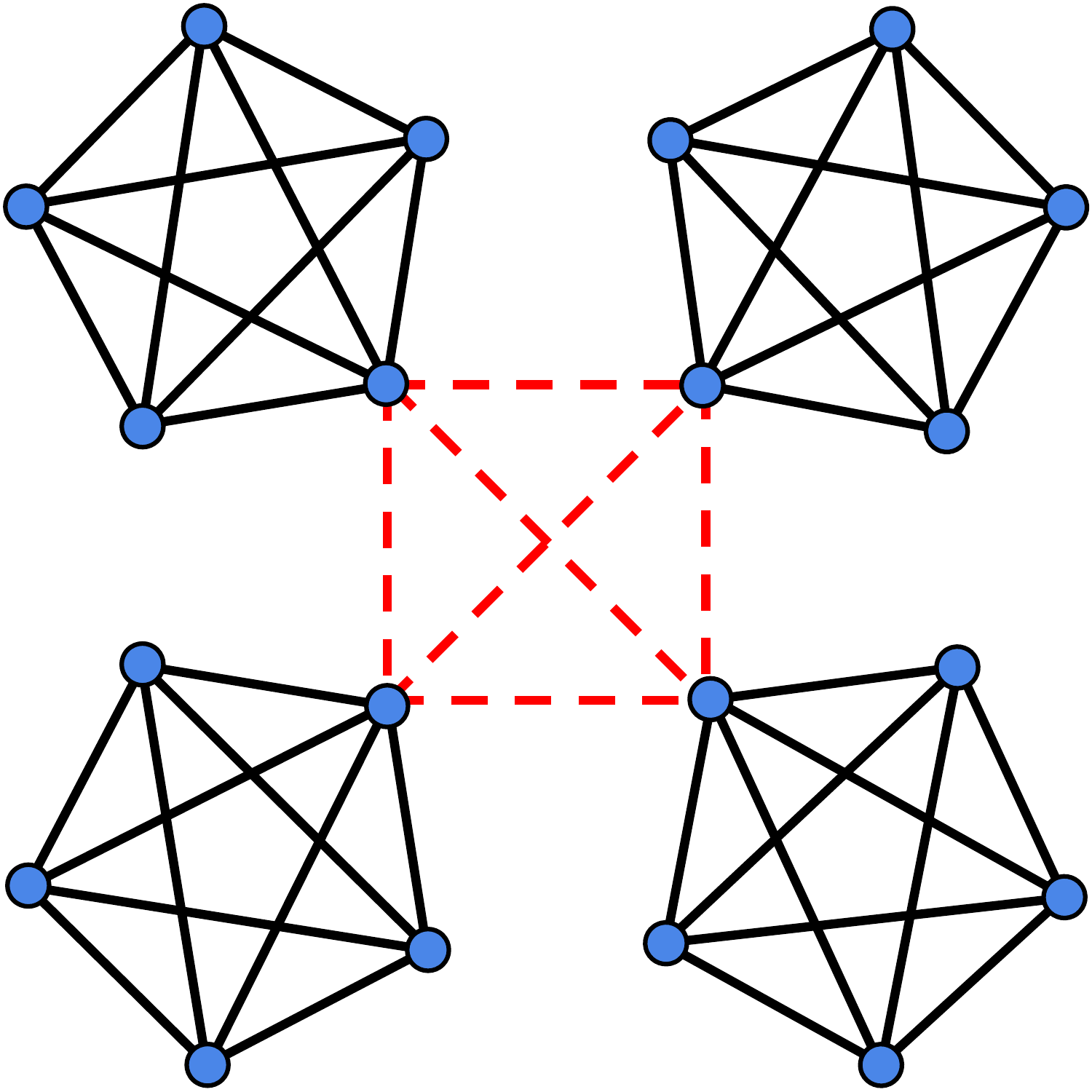}\\
			(a) non-empty graph & (b) $5$-clique & (c) $5$-star & (d) community
			\end{tabular}
	\end{center}
	\vskip-10pt
	\caption{Illustration of the examples considered in this paper. (a) shows a single-edge graph; (b) is a $5$-clique; (c) is a $5$-star; (d) is an example of a graph that has community structure with $k=5$ and $l=4$. We can write the detection problems as  \eqref{eq:testdefS2} by defining the corresponding shown graphs as $G_*$.}
	\label{fig:prop-example}
	\vspace{-10pt}
\end{figure}

\subsection{Main Contributions}
There are three major contributions of this paper.

First, we develop a novel technique to derive minimax lower bounds of structure detection problems in Ising models. Our proof technique relates the Ising model probability mass function and the $\chi^2$-divergence between two distributions to the number of certain Eulerian subgraphs of the underlying graph. With this technique, we are able to obtain a general information-theoretic lower bound for arbitrary alternative hypothesis, which can be immediately applied to examples including any of the four examples described in the previous section. 

Second, we propose a linear scan test on the sample covariance matrix that matches our minimax lower bound for arbitrary structure detection problems in certain regimes. Along with our general minimax lower bound result, this procedure reveals the fact that a quantity called \textit{arboricity}, (i.e., a certain maximum edge to vertex ratio of graphs in the alternative hypothesis) essentially determines the information-theoretic limit of the testing problem. This matches the intuition that in order to distinguish a graph with small signal strength from the empty graph, one need to examine the densest part of the graph. Furthermore, the denser the graph is, the easier it is to detect it, where the precise measurement of graph density turns out to be graph arboricity.

In addition, we also study the computational lower bound of structure detection problems. Based on a conjecture on the computational hardness of sparse Principal Component Analysis (PCA), which has been studied by recent works \citep{berthet2013optimal,berthet2013complexity,gao2014sparse, brennan2019optimal, brennan2018reducibility}, we prove that no polynomial time test can detect structures successfully unless there is a sufficiently large signal strength. In order to prove this result we exhibit a way to give sharp inequalities for the even moments of sums of i.i.d. Rademacher random variables which may be of independent interest. 
Furthermore, in addition to this result, we also derive another computational lower bound result under the oracle computational model studied by \cite{feldman2015statistical,feldman2018complexity,wang2015sharp}.

\subsection{Related Work}

Plenty of work has been done on graph estimation (also known as graph selection) in Ising models. \cite{santhanam2012information} gave the first information-theoretic lower bounds of graph selection problems for bounded edge cardinality and bounded vertex degree models. Later, \cite{tandon2014information} proposed a general framework for obtaining information-theoretic lower bounds for graph selection in ferromagnetic Ising models, and showed that the lower bound is specified by certain structural conditions. On the other hand, \cite{ravikumar2010high} proposed an algorithm for structure learning based on $l_1$-regularized logistic regression that works in the high temperature regime \citep{bento2009graphical}. \cite{bresler2015efficiently} gave a polynomial time algorithm that works for both low and high temperature regimes. In addition see \cite{vuffray2016interaction, lokhov2018optimal} for best polynomial time structure learning algorithms. Compared to graph estimation, structure detection is a statistically easier problem. As a consequence, the limitations on signal strength that we exhibit in this paper are weaker than the corresponding requirements used in the graph estimation literature. 

Structure detection problems have been studied in \cite{addario2010combinatorial,arias2012detection,arias2015detectinga,arias2015detectingb}. However, all these works focus on Gaussian random vectors. Specifically, \cite{addario2010combinatorial} study testing the existence of specific subsets of components in a Gaussian vector whose means are non-zero based on a single observation. \cite{arias2012detection} consider the correlation graph of a Gaussian random vector and establish upper and lower bounds for detecting certain classes of fully connected cliques based on one sample. In a follow up work, \cite{arias2015detectinga} generalize the result to multiple i.i.d. samples. \cite{arias2015detectingb} give another related result on detecting a region of a Gaussian Markov random field against a background of white noise. 
The major difference between these existing works and our work is that we focus on detection in the Ising model, and our results not only work for cliques, but also for general graph structures. Recently, \citep{neykov2016combinatorial, lu2017adaptive, neykov2017property} proposed a novel problem where one considers testing whether the underlying graph obeys certain combinatorial properties. We stress that while related to structure detection, these problems are fundamentally different as structure detection is a statistically simpler task. It is not surprising therefore that the algorithms we develop are very different from those in the aforementioned works, and the proofs of our lower bounds use different techniques. 
 

Our result on computational lower bound follows the recent line of work on computational barriers for statistical models \citep{berthet2013optimal,berthet2013complexity,ma2015computational,gao2014sparse,brennan2018reducibility, brennan2019optimal} based on the planted clique conjecture. \cite{berthet2013optimal} focus on the testing method based on Minimum Dual Perturbation (MDP) and semidefinite programming (SDP) and prove that such polynomial time testing methods cannot attain the minimax optimal rate for sparse PCA. \cite{berthet2013complexity} prove the computational lower bound on a generalized sparse PCA problem which includes all multivariate distributions with certain tail probability assumptions on the quadratic form. \cite{ma2015computational} consider the Gaussian submatrix detection problem and propose a framework to analyze computational limits of continuous random variables via constructing a sequence of asymptotically
equivalent discretized models. Inspired by the results in \cite{ma2015computational}, \cite{gao2014sparse} consider the computational lower bound for Gaussian sparse Canonical Correlation Analysis (CCA) as well as sparse PCA problems. Our computational lower bound result is based on the previous studies on the sparse PCA problem. We summarize these results and directly base our result for Ising models on a sparse PCA conjecture. 

Other related works on Ising models include the following.  \cite{berthet2016exact} study the Ising block model by providing efficient methods for block structure recovery as well as information-theoretic lower bounds. \cite{mukherjee2018global} study the upper and lower bounds for detection of a sparse external magnetic field in Ising models. \cite{daskalakis2018testing} consider goodness-of-fit and independence testing in Ising models using pairwise correlations. \cite{gheissari2017concentration} establish concentration inequalities for polynomials of a random vector in contracting Ising models.
\subsection{Notation}
We use the following notations in our paper. For a vector $\rmv=(\rmv_1,\ldots,\rmv_d)^T\in \RR^d$ and a number $1\leq p < \infty$, let $\|\rmv\|_p = (\sum_{i=1}^d |\rmv_i|^p)^{1/p}$. We also define $\|\rmv\|_\infty = \max_i|\rmv_i|$. For a matrix $\rmA$, we denote $\|\rmA\|_{\max} = \max_{j,k} |\rmA_{jk}|$, $\| \rmA \|_F = (\sum_{i,j=1}^d \rmA_{ij}^2)^{1/2}$ and $\|\rmA\|_p = \max_{\|\rmv\|_p = 1} \|\rmA\rmv\|_p$ for $p\geq 1$. 

We also use the standard asymptotic notations $O(\cdot)$ and $o(\cdot)$. Let $a_n$ and $b_n$ be two sequences and assume that $b_n$ is non-zero for large enough $n$. We write $a_n = O(b_n)$ if $\limsup_{n\rightarrow \infty} |a_n/b_n| < \infty$ and $a_n = o(b_n)$ if $\lim_{n\rightarrow \infty} a_n/b_n =0$.

Let $\overline{V} = \{ 1,\ldots, d \}$ be the complete vertex set. In this paper we consider graphs with $d$ vertices over the vertex set $\overline{V}$. For a graph $G$, let $E(G) = \{ (i,j): G\text{ has  an edge connecting vertex }\allowbreak i \text{ and }j \}$, where $(i,j)=(j,i)$ are undirected pairs. Moreover, we denote by $V(G) = \{ i\in \overline{V}: G \text{ has an edge connecting vertex } i \}$ the set of non-isolated vertices of $G$. 


\subsection{Organization of the Paper}
Our paper is organized as follows. In Section~\ref{section:lowerbound}, we present our main information-theoretic lower bound result as well as its applications to various detection problems. In Section~\ref{section:upperbounds} we develop a general procedure to construct optimal linear scan tests on the sample covariance matrix. In Section~\ref{section:complowerbound} we examine the computational limit of the polynomial time tests by comparing the the Ising and sparse PCA models. Sections \ref{section:five} and \ref{section:six} contain the proofs of the main results of Sections \ref{section:lowerbound} and \ref{section:upperbounds} respectively. The remaining detailed proofs are all placed in Section~\ref{proofs:mainsection}. In Section~\ref{section:oralowerbound} we provide an additional proof of a computational lower bound under the oracle computational model.

\section{Lower Bounds}\label{section:lowerbound}
The minimax risk of detection problem \eqref{eq:testdefS1} is defined as
\begin{align}\label{minimax:risk}
\gamma[\cS(\cG_1, \theta)] := \inf_{\psi}\bigg[\PP_{0,n}(\psi = 1) + \max_{\Theta\in\cS(\cG_1,\theta)}\PP_{\Theta,n}(\psi = 0)\bigg],
\end{align}
where $\PP_{0,n}$ and $\PP_{\Theta,n}$ are the joint probability measures of $n$ i.i.d. samples under null and alternative hypotheses respectively. The infimum in \eqref{minimax:risk} is taken over all measurable test functions $\psi: \{\bX_1, \ldots, \bX_n\} \mapsto \{0,1\}$. 
If $\liminf_{n\rightarrow \infty} \gamma[\cS(\cG_1, \theta)] = 1 $, we say that any test is asymptotically powerless. 

In this section, we derive necessary conditions on the signal strength $\theta$ required for detection problems to admit tests which are not asymptotically powerless. Our results will show that the difficulty of testing an empty graph against $\cG_1$ is determined by a quantity called arboricity, which was originally introduced in graph theory by \cite{nash1961edge} to quantify the minimum number of disjoint forests into which the edges of a given graph can be partitioned.

For a graph $G\in \cG_1$ and a vertex set $V\subseteq \overline{V}$, let $G_V$ be the graph obtained by restricting $G$ on the vertices in $V$ (i.e., removing all edges which are connected to vertices $\overline{V}\setminus V$). The arboricity of $G$ is defined as follows:
\begin{align}\label{arboricity:def}
\cR(G) := \bigg\lceil \max_{V\subseteq \overline{V}} \frac{|E(G_V)|}{|V| - 1} ~\bigg\rceil,
\end{align}
where $\lceil \cdot \rceil$ is the ceiling function, and $0/0$ is understood as $0$. 
The arboricity of a graph measures how dense the graph is. For an illustration of arboricity see Figure \ref{fig:example_arboricity}. Let $G_\emptyset = (\overline V, \emptyset)$ denote the empty graph. By definition $\cR(G_\emptyset) = 0$. 
\begin{figure}[t]
	\begin{center}
		\includegraphics[width=.6\textwidth,angle=0]{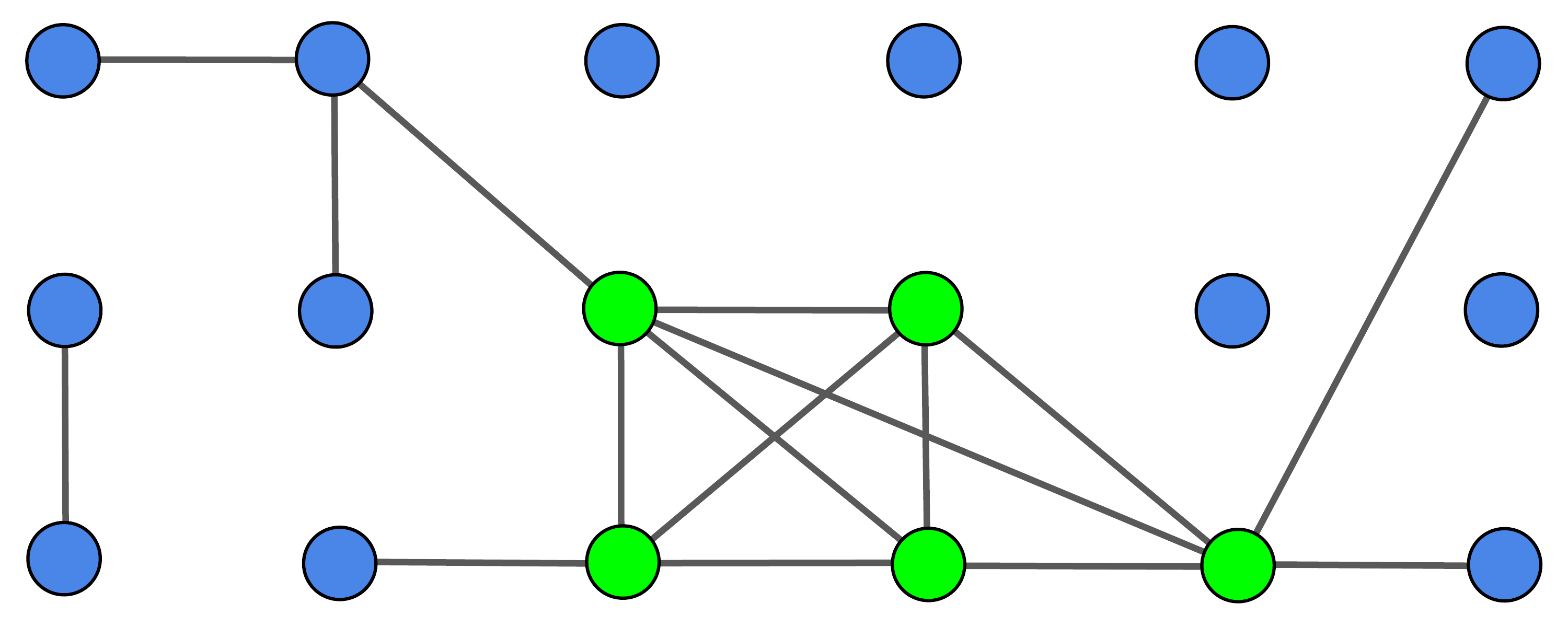}
	\end{center}
	\vskip-10pt
	\caption{Illustration of arboricity. Here the nodes and black lines represent the vertices and edges of graph $G$ respectively. The vertex set $V$ that maximizes $|E(G_V)|/ (|V|-1)$ is denoted by green nodes, which also gives the densest subgraph of $G$. We have $\cR(G) = 3$.}
	\label{fig:example_arboricity}
	\vspace{-10pt}
\end{figure}
For a given graph $G$ the larger $\cR(G)$ is, the more different $G_\emptyset$ and $G$ are. We further define
\begin{align*}
\cR := \min_{G\in \cG_1} \cR(G)
\end{align*}
to measure the difference in graph density between $G_\emptyset$ and $\cG_1$ in a worst case sense. Let $\cG^*$ be a nonempty subset of $\cG_1$ such that all graphs in $\cG^*$ have arboricity $\cR$. By the definition of $\cR$, such nonempty $\cG^*$ exists, and may not be unique. 
Our analysis works for arbitrary choices of $\cG^*$ which satisfy the incoherence condition \citep{neykov2016combinatorial} defined as follows.
\begin{definition}(Negative association and incoherence condition) 
For $k\geq 0$, we say the random variables $Y_1,\ldots,Y_k$ are negatively associated if for any $k_1,k_2\geq 0$ with $k_1 + k_2 \leq k$, any distinct indices $i_1,i_2,\ldots, i_{k_1},j_1,\ldots,j_{k_2}$, and any coordinate-wise non-decreasing functions $f$ and $g$, we have
\begin{align*}
\Cov[ f(Y_{i_1},\ldots, Y_{i_{k_1}}),g(Y_{j_1},\ldots, Y_{j_{k_2}})] \leq 0.
\end{align*}
We say that the graph set $\cG^*$ is incoherent if for any fixed graph $G$, the binary random variables 
$$ \{\ind[i \in V(G')]\}_{i\in V(G)}$$ 
are negatively associated with respect to uniformly sampling $G'\in \cG^*$.	
\end{definition}
For a graph $G$, we denote by $\rmA_G$  the adjacency matrix of $G$. Then 
given $\cG^*$, we define the corresponding parameter set with minimal signal strength $\theta$ as
\begin{align*}
\cS^* = \{ \Theta = \theta \rmA_G: G \in \cG^*\}.
\end{align*}
Let $V_{\max} = \max_{G\in \cG^*} |V(G)|$, $\Lambda = \max_{G\in \cG^*} \|\rmA_{G}\|_F$, $\Gamma = \max_{G\in \cG^*} \|\rmA_{G}\|_1$ and $\cB = 512 \{ \Lambda^4  \land [ V_{\max}(\Gamma \lor \Lambda)^2 ]\}$. Then for $\theta \leq (2\Lambda)^{-1}$, by definition (recall \eqref{def:cS:cG1}) we have $\cS^* \subseteq \cS(\cG_1,\theta)$, and therefore
\begin{align}\label{eq:riskSstardef}
\gamma[\cS(\cG_1, \theta)] \geq \gamma(\cS^*) := \inf_{\psi}\bigg[\PP_{0,n}(\psi = 1) + \max_{\Theta\in\cS^*}\PP_{\Theta,n}(\psi = 0)\bigg].
\end{align}
By \eqref{eq:riskSstardef}, it follows that to give a lower bound on $\gamma[\cS(\cG_1, \theta)]$ it suffices to lower bound $\gamma(\cS^*)$. We are ready to introduce our main theorem.

\begin{theorem}\label{thm:lowerbound}
	Let $\cG^*$ be a non-empty subset of $\cG_1$ such that all graphs in $\cG^*$ have arboricity $\cR$. Define 
    $N(\cG^*) := \max_{G\in\cG^*}\EE_{G'\sim U(\cG^*)} |V(G)\cap V(G')|$, where $U(\cG^*)$ is the uniform distribution over $\cG^*$. If $\cG^*$ is incoherent,  $N(\cG^*) = o(1)$, and
	\begin{align}\label{eq:informationlowerboundsignalstrength}
	\theta \leq \sqrt{\frac{\log [N^{-1}(\cG^*)]}{6n\cR}} \land \sqrt{\frac{\cR}{\cB}} \land \frac{1}{8(\Lambda \vee \Gamma)},
	\end{align}
	then we have
	\begin{align*}
	\liminf _{n \rightarrow \infty} \gamma(\cS^*) = 1.
	\end{align*}
\end{theorem}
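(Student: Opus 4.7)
The plan is to use Le Cam's Bayesian method with a uniform prior on $\cG^*$. Letting $\overline{\PP}_n := \EE_{G\sim U(\cG^*)}\,\PP_{\theta\rmA_G,n}$ denote the mixture alternative, the standard bound $\gamma(\cS^*) \geq 1 - \tfrac12\sqrt{\chi^2(\overline{\PP}_n, \PP_{0,n})}$ reduces the problem to showing the chi-square divergence tends to zero. Because the samples are i.i.d., a routine calculation gives
\begin{align*}
\chi^2(\overline{\PP}_n,\PP_{0,n}) + 1 \;=\; \EE_{G,G'\sim U(\cG^*)}\bigl[\rho(G,G')^{n}\bigr], \qquad \rho(G,G') := \EE_{\bX\sim\PP_0}\!\left[\frac{\PP_{\theta\rmA_G}(\bX)\,\PP_{\theta\rmA_{G'}}(\bX)}{\PP_0(\bX)^2}\right],
\end{align*}
with $G,G'$ independent uniform draws from $\cG^*$. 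Since $\PP_0$ is the uniform measure on $\{\pm 1\}^d$, $\rho(G,G')$ reduces to a ratio of Ising partition functions, which I analyse directly.

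The heart of the proof is a Taylor expansion of these partition functions combined with the Rademacher moment identity
\begin{align*}
\EE_{\bX\sim\PP_0}\!\Bigl[\prod_{(i,j)\in S} X_i X_j\Bigr] \;=\; \mathbf{1}\{\text{every vertex has even degree in the multiset }S\}.
\end{align*}
Only multisets forming Eulerian subgraphs survive, and after cancelling the single-graph partition functions $Z(\theta\rmA_G)$ and $Z(\theta\rmA_{G'})$ in the denominator against $Z[\theta(\rmA_G+\rmA_{G'})]$ in the numerator, the leading contributions are indexed by Eulerian subgraphs whose edges lie in $G\cap G'$ and whose vertices lie in $V(G)\cap V(G')$. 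The crucial combinatorial input is that the number of Eulerian subgraphs with $k$ edges in a graph of arboricity at most $\cR$ is bounded by $(c\cR)^k$ for a universal constant $c$: every Eulerian subgraph decomposes into edge-disjoint cycles, which in turn can be enumerated using the Nash--Williams forest decomposition of arboricity $\cR$. Combined with the high-temperature restriction $\theta \leq 1/[8(\Lambda\vee\Gamma)]$ and $\theta^2 \leq \cR/\cB$, which control the Taylor remainders via the norms entering $\cB$, this yields a bound of the schematic form
\begin{align*}
\rho(G,G') \;\leq\; \exp\!\bigl[\,C\,\theta^2\,\cR\,\cdot\,|V(G)\cap V(G')|\,\bigr].
\end{align*}

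To finish, I raise this to the $n$-th power and integrate against $U(\cG^*)\otimes U(\cG^*)$. The incoherence assumption becomes indispensable here: by negative association of the indicators $\{\mathbf{1}[i\in V(G')]\}_{i\in V(G)}$, the moment generating function of $|V(G)\cap V(G')|$ is dominated by that of a sum of independent Bernoullis whose total mean equals $N(\cG^*)$. Since $N(\cG^*)=o(1)$, this gives $\EE[\exp(\lambda|V(G)\cap V(G')|)] = 1 + O(N(\cG^*)\cdot e^{\lambda})$ for moderate $\lambda$; choosing $\lambda \asymp n\theta^2\cR$ the hypothesis $\theta^2 \leq \log[N^{-1}(\cG^*)]/(6n\cR)$ is exactly the threshold that forces $\chi^2(\overline{\PP}_n,\PP_{0,n})\to 0$. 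The main obstacle is the middle paragraph: tracking the cancellations between numerator and denominator partition functions so that only intersection edges contribute, bounding the resulting weighted Eulerian-subgraph counts by a sharp power of $\cR$, and most importantly converting the edge-level count into a bound involving $|V(G)\cap V(G')|$ rather than $|E(G)\cap E(G')|$. This last step is what lets the vertex-level incoherence hypothesis suffice, and it is also where all three separate conditions on $\theta$ in \eqref{eq:informationlowerboundsignalstrength} must be carefully coordinated.
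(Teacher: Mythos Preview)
Your overall architecture matches the paper's exactly: Le Cam's method with the uniform prior on $\cG^*$, the $\chi^2$ reduction to the single-sample cross-moment $\rho(G,G')$, the high-temperature/Eulerian-subgraph expansion of the partition functions, and the final appeal to negative association to control the moment generating function of $|V(G)\cap V(G')|$. Those pieces are correct.

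The gap is in the combinatorial middle step, and it is not just a missing detail. First, the cancellation does not leave only ``Eulerian subgraphs whose edges lie in $G\cap G'$''. Writing $f_{G,G'}(t)-f_G(t)f_{G'}(t)=\sum_k u_k t^k$, the coefficient $u_k$ counts $k$-edge Eulerian subgraphs of the \emph{multigraph} $G\oplus G'$ that do not split as an Eulerian subgraph of $G$ plus one of $G'$; for instance $u_3$ counts triangles using at least one edge from each of $G$ and $G'$, with possibly no edge in $E(G)\cap E(G')$. The correct localization is vertex-level: any non-splittable Eulerian subgraph must have a connected component containing two vertices of $V(G)\cap V(G')$. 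Second, your ``crucial combinatorial input'' that a graph of arboricity $\cR$ has at most $(c\cR)^k$ Eulerian subgraphs with $k$ edges is false (a disjoint union of $M$ triangles has arboricity $2$ but $M$ Eulerian $3$-subgraphs). The paper never bounds Eulerian counts by arboricity; it uses the trace bound $|\cE(k,G)|\leq 2^k\|\rmA_G\|_F^k$ and the analogous estimate on $q_k$, which is why the constant $\cB$ is built from $\Lambda,\Gamma,V_{\max}$. Arboricity enters only through the low-order terms: $u_2=|E(G)\cap E(G')|\leq\cR\,|V(G)\cap V(G')|$ directly from the definition (intersection edges sit inside $G_{V(G)\cap V(G')}$), and $u_3$ similarly. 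The hypothesis $\theta^2\leq\cR/\cB$ is then precisely what forces the norm-controlled tail $\sum_{k\geq 4}u_k\theta^k$ to be dominated by this leading $\cR\theta^2|V(G)\cap V(G')|$ term, not an independent arboricity bound on higher-order counts.
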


The following is a proof sketch of Theorem~\ref{thm:lowerbound}. 
The full proof is given in Section~\ref{section:five}.

\begin{proof}[Proof Sketch] The proof of Theorem~\ref{thm:lowerbound} follows the following three steps. 

\textbf{Step 1. Lower bounding the minimax risk using Le Cam's method.} We put uniform prior on the graphs in the alternative hypothesis and use Le Cam's method of fuzzy hypotheses to derive a lower bound of $\gamma(\cS^*)$. The resulting lower bound of the minimax risk is inversely related to the $\chi^2$-divergence between the null distribution and the uniform mixture of the distributions characterized by $\cS^*$. This converts the minimax lower bound problem into the problem of upper bounding the $\chi^2$-divergence.

\textbf{Step 2. Connecting the minimax risk lower bound to an Eulerian subgraphs counting problem.} We translate the $\chi^2$-divergence obtained in \textbf{Step 1} to a combinatorial quantity to ease the calculation. Specifically, based on the \textit{high temperature expansion} \citep{fisher1967critical} of Ising model densities, we show that the $\chi^2$-divergence can be bounded by a polynomial of $t = \tanh(\theta)$, whose coefficients are related to the number of certain Eulerian graphs defined by $\cG^*$.

\textbf{Step 3. Finalizing the proof using the incoherence condition of $\cG^*$.} In this step we further bound the polynomial derived in \textbf{Step 2} by neglecting the higher degree terms and increasing the coefficients in front of the lower degree terms (namely only the terms of degree $2, 3$ and $4$ survive). In addition, using that $\theta \leq \sqrt{\cR/\cB} \land [8(\Lambda \vee \Gamma)]^{-1}$ we can further fold the third and fourth degree terms into the second degree term. It turns out that the coefficient in front of the second degree term is bounded by $\cR | V(G)\cap V(G') |$, where the quantity $| V(G)\cap V(G') |$ is the number of overlapping vertices between graphs $G,G'$ in $\cG^*$. We then invoke the incoherence property of $\cG^*$, and show that under the conditions of Theorem~\ref{thm:lowerbound}, $\liminf_{n\rightarrow \infty} \gamma(\cS^*) = 1$.
\end{proof}

\begin{remark}
    The goal of this remark is to explain the intuition behind the first term on the right hand side of \eqref{eq:informationlowerboundsignalstrength}. It is intuitive that some sort of quantity measuring the density of the graph appears in our bound, since the denser a graph is, the easier it is to detect it. Moreover, it is clear that a single test examining the overlapping part of different graphs in $\cG^*$ can simultaneously distinguish these overlapped graphs from the null. As an extreme example, consider the case where all graphs in $\cG^*$ are perfectly overlapped, then $\cG^*$ is essentially a singleton, which is easy to detect. Therefore the degree of overlaps among graphs in $\cG^*$ also affects the difficulty of the test. In our bound in Theorem~\ref{thm:lowerbound}, the density of the graph is measured by the arboricity $\cR$, and the degree of overlaps is measured by the quantity $N(\cG^*)$.
\end{remark}

\begin{remark}
	Inequality \eqref{eq:informationlowerboundsignalstrength} shows that the necessary signal strength of detection problems is determined by the minimum of three terms. While the first term $\sqrt{\frac{\log [N^{-1}(\cG^*)]}{6n\cR}}$ is related to both the structural properties of graphs in $\cG_1$ and the sample size $n$, the second term $\sqrt{\frac{\cR}{\cB}}$ and third term $\frac{1}{8(\Lambda \vee \Gamma)}$ are independent of $n$. Therefore when the sample size is large enough, $\sqrt{\frac{\log [N^{-1}(\cG^*)]}{6n\cR}}$ is the leading term determining the necessary signal strength, and the other two terms mainly serve as scaling conditions of $\theta$. 
\end{remark}

\begin{remark}
    The condition \eqref{eq:informationlowerboundsignalstrength} given by Theorem \ref{thm:lowerbound} is comparable to the ``multi-edge'' results given in \cite{neykov2016combinatorial}, where the authors give minimax lower bounds of combinatorial inference problems in Gaussian graphical models. Unlike our results in Theorem \ref{thm:lowerbound}, the necessary signal strength for Gaussian graphical models given by \cite{neykov2016combinatorial} does not explicitly involve graph arboricity. It is also not very clear under what condition the lower bound given by \cite{neykov2016combinatorial} is sharp. In comparison, 
    in this paper we show that graph arboricity is an appropriate quantity that gives sharp lower bounds for \textit{any} structure detection problems under the incoherence condition and the sparsity assumption $s=O(d^{1/2-c})$ for some $c>0$. 
    It is also worth comparing Theorem \ref{thm:lowerbound} to the results of \cite{neykov2017property}. The lower bounds on the signal $\theta$ of \cite{neykov2017property}, typically involve the quantity $\sqrt{\frac{\log d}{n}}$ which is generally much larger than the right hand side of  \eqref{eq:informationlowerboundsignalstrength} when $\cR$ is large enough. This is intuitively clear since detection problems are statistically easier than graph property testing. Our proof strategy is also completely different than the one used by \cite{neykov2017property}, and relies on high temperature expansions rather than  Dobrushin's comparison theorem. 
\end{remark}

In Theorem~\ref{thm:lowerbound}, the incoherence condition of $\cG^*$ is not always easy to check. However, it is known that this condition is satisfied by a various discrete distributions including the multinomial and hypergeometric distributions \citep{joag1983negative,dubhashi1998balls}. In particular, Theorem~2.11 in \cite{joag1983negative} states that negative association holds for all permutation distributions. Therefore, for detection problems of the form \eqref{eq:testdefS2}, incoherence condition is always satisfied by picking $\cG^*$ to be the set of all graphs isomorphic to $G_*$. This leads to the following corollary (recall that we are assuming $s = o(\sqrt{d})$).

\begin{corollary}\label{cor:lowerbound}
	Let $G_*$ be a graph with $s$ vertices and $\cG_1(G_*)$ be the class of all graphs that contain a size-$s$ subgraph isomorphic to $G_*$. Let $\cB(G_*) = 512 \{ \|\rmA_{G_*}\|_F^4  \land [ (\|\rmA_{G_*}\|_1 \lor \|\rmA_{G_*}\|_F)^2s ]\}$. If
	\begin{align*}
	\theta \leq \sqrt{\frac{\log (d/s^2)}{6n\cR(G_*)}} \land \sqrt{\frac{\cR(G_*)}{\cB(G_*)}} \land \frac{1}{8(\|\rmA_{G_*}\|_F \vee \|\rmA_{G_*}\|_1)},
	\end{align*}
	then we have
	\begin{align*}
	    \liminf _{n \rightarrow \infty} \gamma\{ \cS[\cG_1(G_*),\theta] \} = 1.
	\end{align*}
\end{corollary}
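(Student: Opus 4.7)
The plan is to derive Corollary~\ref{cor:lowerbound} as an immediate consequence of Theorem~\ref{thm:lowerbound} by choosing $\cG^*$ to be the set of all graphs on the vertex set $\overline V$ that are isomorphic to $G_*$, and then verifying each of the hypotheses of the theorem in turn.

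First, I would check that $\cG^* \subseteq \cG_1(G_*)$ and that every $G \in \cG^*$ has arboricity $\cR(G_*)$, since arboricity is an isomorphism invariant. Since all $G \in \cG^*$ differ only by a relabeling of the vertices, we also have $V_{\max} = s$, $\Lambda = \|\rmA_{G_*}\|_F$, $\Gamma = \|\rmA_{G_*}\|_1$, and so $\cB = \cB(G_*)$ exactly. This identifies the second and third terms in \eqref{eq:informationlowerboundsignalstrength} with the corresponding terms in the corollary.

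Next, I would establish two symmetry facts about the uniform distribution $U(\cG^*)$. Because relabeling vertices acts transitively on $\cG^*$, for $G' \sim U(\cG^*)$ the random vertex set $V(G')$ is uniformly distributed over all $s$-element subsets of $\overline V$. This has two consequences. First, by linearity of expectation, for any fixed $G \in \cG^*$,
\$
\EE_{G' \sim U(\cG^*)} |V(G) \cap V(G')| \;=\; \sum_{i \in V(G)} \PP(i \in V(G')) \;=\; s \cdot \tfrac{s}{d},
\$
so $N(\cG^*) = s^2/d$. Under the assumption $s = o(\sqrt d)$ this is $o(1)$, and $\log[N^{-1}(\cG^*)] = \log(d/s^2)$, recovering the first term in \eqref{eq:informationlowerboundsignalstrength}. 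Second, the indicators $\{\ind[i \in V(G')]\}_{i \in V(G)}$ are obtained from a uniformly random $s$-subset of $\overline V$, so they form a permutation distribution on $\{0,1\}^d$ with exactly $s$ ones; by Theorem~2.11 of \cite{joag1983negative} (explicitly cited in the paragraph preceding the corollary) this distribution is negatively associated, which is precisely the incoherence condition for $\cG^*$.

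With incoherence, $N(\cG^*) = o(1)$, and the identifications of $\Lambda$, $\Gamma$, $\cB$ and $\cR$ in place, the hypothesis \eqref{eq:informationlowerboundsignalstrength} of Theorem~\ref{thm:lowerbound} reduces exactly to the bound on $\theta$ in the corollary. Applying $\gamma[\cS(\cG_1(G_*), \theta)] \geq \gamma(\cS^*)$ (which is immediate from $\cS^* \subseteq \cS(\cG_1(G_*), \theta)$ for $\theta \leq (2\Lambda)^{-1}$, itself implied by the third term in the bound) and Theorem~\ref{thm:lowerbound} then yields the conclusion. I expect the only nontrivial step to be the verification of incoherence, i.e.\ the reduction of the membership indicators of $V(G')$ to a permutation distribution; once this is invoked, everything else is a direct substitution into Theorem~\ref{thm:lowerbound}.
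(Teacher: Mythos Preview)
Your proposal is correct and follows essentially the same approach as the paper's own proof: choose $\cG^*$ to be the set of all graphs isomorphic to $G_*$, verify incoherence via the permutation-distribution result of \cite{joag1983negative}, compute $N(\cG^*)=s^2/d$ by linearity of expectation, identify $\cR,V_{\max},\Lambda,\Gamma,\cB$ with the corresponding quantities of $G_*$, and invoke Theorem~\ref{thm:lowerbound}. The paper's argument is slightly terser but otherwise identical in structure and content.
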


\subsection{Examples}\label{section:lowerboundexample}
In this section we apply Corollary~\ref{cor:lowerbound} to specific detection problems. 

\begin{example}[Empty graph versus non-empty graph]
	Consider testing empty graph versus non-empty graph defined in Section~\ref{section:introduction}. If
	\begin{align}\label{ex1:bound}
	\theta \leq \sqrt{\frac{\log (d/4)}{6n}}\land \frac{1}{32\sqrt{2}},
	\end{align}
	we have $\liminf_{n\rightarrow \infty} \gamma[\cS(\cG_1,\theta)] = 1$.
\end{example}
\begin{proof}
	In this example $s=2$, $\cG_1 = \cG_1(G_*)$, where $G_*$ is a single-edge graph, and we have $\cR(G_*) = 1$. By a  direct calculation we have $\| \rmA_{G_*} \|_F = \sqrt{2}$, $\| \rmA_{G_*} \|_1 = 1$, and $\cB(G_*) = 2048$.  By Corollary~\ref{cor:lowerbound}, if \eqref{ex1:bound} holds 
	we have $\liminf_{n\rightarrow \infty} \gamma(\cS^*) = 1$.
\end{proof}

\begin{example}[Clique Detection]
For the clique detection problem defined in Section~\ref{section:introduction}, if 
\begin{align}\label{ex2:bound}
\theta \leq \sqrt{\frac{\log (d/s^2)}{6ns}} \land \frac{1}{32s},
\end{align}
we have $\liminf_{n\rightarrow \infty} \gamma[\cS(\cG_1,\theta)] = 1$.
\end{example}
\begin{proof}
	In this example $\cG_1 = \cG_1(G_*)$ with $G_*$ being an $s$-clique graph. We have $\cR(G_*) = \lceil s/2 \rceil$ and therefore $s/2 \leq \cR(G_*) \leq s$. By direct calculation we have $\| \rmA_{G_*} \|_F = \sqrt{s(s-1)}\leq s$, $\| \rmA_{G_*} \|_1 = s-1 \leq s$, and therefore $\cB(G_*) \leq 512 s^3$. By Corollary~\ref{cor:lowerbound}, if \eqref{ex2:bound} holds 
	we have $\liminf_{n\rightarrow \infty} \gamma(\cS^*) = 1$.
\end{proof}

\begin{example}[Star Detection]
	For the star detection problem defined in Section~\ref{section:introduction}, if $s\geq 4$ and
	\begin{align}\label{ex3:bound}
	\theta \leq \sqrt{\frac{\log (d/s^2)}{6n}} \land \frac{1}{32\sqrt{2}s},
	\end{align}
	we have $\liminf_{n\rightarrow \infty} \gamma[\cS(\cG_1,\theta)] = 1$.
\end{example}
\begin{proof}
	In this example $G_*$ is a star graph and we have $\cR(G_*) =1$. By direct calculation we have $\| \rmA_{G_*} \|_F = \sqrt{2(s-1)}\leq \sqrt{2s}$, $\| \rmA_{G_*} \|_1 = s-1 \leq s$. If $s\geq 4$, we have $\cB(G_*) = 2048 s^2$. By Corollary~\ref{cor:lowerbound}, if \eqref{ex3:bound} holds 
	we have $\liminf_{n\rightarrow \infty} \gamma(\cS^*) = 1$.
\end{proof}

\begin{example}[Community structure detection]
	For the community structure detection problem defined in Section~\ref{section:introduction}, if $k\geq 4$, $l\geq 2$ and
	\begin{align}\label{ex4:bound}
	\theta \leq \sqrt{\frac{\log (d/s^2)}{6n(l \lor k)}} \land \frac{1}{32\sqrt{2} s},
	\end{align}
	we have $\liminf_{n\rightarrow \infty} \gamma[\cS(\cG_1,\theta)] = 1$.
\end{example}
\begin{proof} We have $\cG_1 = \cG_1(G_*)$, where $G_*$ is a community structure graph defined in Section~\ref{section:introduction}. To calculate $\cR(G_*)$, we utilize the fact that arboricity equals the minimum number of forests into which the edges of a given graph can be partitioned \citep{nash1961edge}. Let $\cC_1,\ldots,\cC_l$ be the communities. For $i=1,\ldots,l$, we know that $\cC_i$ is a $k$-clique, and the arboricity is $\lceil k/2 \rceil$. Therefore inside $\cC_i$, we can partition the graph into $\lceil k/2 \rceil$ forests. There is also an $l$-clique in $G_*$ consisting of the cross-community edges. This clique can be partitioned into $\lceil l/2 \rceil$ forests. Note that this $l$-clique shares only one vertex $v(\cC_i)$ with the community $\cC_i$. Therefore for any forest in the partition of this $l$-clique and any forest in the partition of $\cC_i$, we can merge them into a single forest because the resulting graph is still acyclic. We can keep merging forests from other communities. Eventually, we can merge $l$ forests from distinct communities to a forest in the $l$-clique, without introducing any cycles. If $\lceil l /2 \rceil \geq \lceil k/2 \rceil$, we will obtain $\lceil l /2 \rceil$ forests that form a partition of $G_*$; if $\lceil l /2 \rceil < \lceil k/2 \rceil$, then the partition will contain $\lceil k/2 \rceil$ forests. Therefore by the equivalent definition of arboricity given in \citep{nash1961edge} we have $\cR(G_*) \leq \lceil (l \lor k)/2 \rceil$. On the other hand, since $G_*$ contains an $l$ clique, obviously $ \cR(G_*) \geq \lceil l/2 \rceil $. Similarly, $ \cR(G_*) \geq \lceil k/2 \rceil $ and hence we have $\cR(G_*) = \lceil (l \lor k)/2 \rceil$. Therefore $(l \lor k)/2 \leq  \cR(G_*)\leq l \lor k$.
	
	By direct calculation, we have $\| \rmA_{G_*} \|_F = \sqrt{l k(k-1) + l(l -1)}\leq \sqrt{l k^2 + l^2}$, $\| \rmA_{G_*} \|_1 = k-1 + l-1 \leq k + l$. We now compare the upper bounds of $\|\rmA_{G_*}\|_F$ and $\|\rmA_{G_*}\|_1$. If $k\geq 4$ and $l\geq 2$, we have $l \geq 1+ l/2$ and
	\begin{align*}
	l k^2 + l^2 \geq (1+l/2)k^2 +l^2 = k^2 + l k^2/2 +l^2 \geq k^2 + 2kl + l^2 = (k+l)^2.
	\end{align*}
	Therefore 
	$$
	\cB(G_*) \leq 512 [ (l k^2 + l^2)^2 \land ( l k^2 + l^2 )lk ] = 512 ( l k^2 + l^2 )lk = 512 (s^2 k + sl^2), 
	$$
	and
	\begin{align*}
	\sqrt{\frac{\cR(G_*)}{\cB(G_*)}} & \geq \sqrt{\frac{l \lor k}{ 1024(s^2 k + sl^2)}} \geq \sqrt{\frac{l \lor k}{ 1024(s^2 k + s^2l)}} \geq \sqrt{\frac{l \lor k}{ 2048s^2( k \lor l)}} = \frac{1}{32\sqrt{2}s}.
	\end{align*}
	Moreover,
	\begin{align*}
	\frac{1}{8(\|\rmA_{G_*}\|_F \vee \|\rmA_{G_*}\|_1)} \geq \frac{1}{8 \sqrt{lk^2 + l^2}}\geq \frac{1}{8\sqrt{2}s}.
	\end{align*}
	Therefore by Corollary~\ref{cor:lowerbound}, if \eqref{ex4:bound} holds 
	we have $\liminf_{n\rightarrow \infty} \gamma(\cS^*) = 1$.
\end{proof}

\section{Upper Bounds}\label{section:upperbounds}
In this section we construct upper bounds for the hypothesis testing problem \eqref{eq:testdef}. We propose a general framework for testing an empty graph $G_\varnothing$ against an arbitrary graph set $\cG_1$. We remind the reader that the arboricity of a graph $G$ is defined in \eqref{arboricity:def} as
\begin{align*}
\cR(G) := \bigg\lceil \max_{V\subseteq \overline{V}} \frac{|E(G_V)|}{|V| - 1} ~\bigg\rceil,
\end{align*}
where $G_V$ is the graph obtained by restricting $G$ on the vertex set $V$. The arboricity $\cR$ of $\cG_1$ is then defined as
\begin{align*}
\cR := \min_{G\in \cG_1} \cR(G).
\end{align*}
We now introduce the concept of witnessing subgraph and witnessing set. Before that, we remind the reader, that in this paper all graphs have $d$ vertices (i.e., all graphs are over the vertex set $\overline V$), unless otherwise specified. Therefore a subgraph $G'$ of a graph $G = (\overline V, E)$ is a graph with $d$ vertices whose edge set is a subset of the edge set of the larger graph, i.e., $G' = (\overline V, E')$ where $E' \subseteq E$. Importantly, the notation $V(G)$ and $V(G')$ refer to the \underline{non-isolated} vertices of $G$ and $G'$ which may be strict subsets of $\overline V$.

\begin{definition}[Witnessing Subgraph] 
For a graph $G\in \cG_1$ we call the graph $H$ a witnessing subgraph of $G$ with respect to $\cG_1$, if $H$ is a subgraph of $G$ and $\lceil  |E(H)|/[|V(H)|-1]  \rceil \geq \cR$.
\end{definition}
Here we remark that for $H$ to be a witnessing subgraph of $G$, it is unnecessary to have $\lceil |E(H)|/[|V(H)|-1]  \rceil = \cR(G)$. Instead, we only require that $\lceil  |E(H)|/[|V(H)|-1]  \rceil \geq \cR$, which is a weaker requirement since by definition we have $\cR \leq \cR(G)$ for any $G\in \cG_1$. This implies that every graph $G \in \cG_1$ has at least one witnessing graph, which may be obtained from the densest subgraph of $G$ (with potential edge pruning). 
\begin{definition}[Witnessing Set]
	We call the collection of graphs $\cH$ a witnessing set of  $\cG_1$, if for every $G\in \cG_1$, there exists  $H\in \cH$ such that $H$ is a witnessing subgraph of $G$.
\end{definition}
By the definition of $\cR$, and as we previously argued, every graph $G\in \cG_1$ must have at least one witnessing subgraph. Therefore at least one witnessing set $\cH$ of $\cG_1$ exists. We define the set of witnessing graphs in order to facilitate the development of \textit{scan tests}. Below we will formalize a test statistic which scans over all graphs in $\cH$. Importantly, in order to match the lower bound result given by Theorem~\ref{thm:lowerbound}, it is not sufficient to scan directly over the graphs from the set $\cG_1$. This is because the graphs in $\cG_1$ may contain non-essential edges which may introduce noise during the testing. In contrast, the graphs from $\cH$ trim down those non-essential edges and focus only on the essential parts of the graphs in $\cG_1$.

We now introduce our general testing procedure. Our test is based on a witnessing set $\cH$. For $H\in \cH$ we define
\begin{align}\label{WH:def}
 \hat{W}_{H} := \frac{1}{n}\cdot \sum_{l=1}^{n} \Bigg( \frac{1}{|E(H)|} \sum_{(i,j)\in E(H)}  X_{l,i} X_{l,j}\Bigg),
\end{align}
where $\bX_l$ is the $l$-th sample and $X_{l,i}$, $X_{l,j}$ are the $i$-th and $j$-th components of $\bX_l$ respectively. Our test then scans over all possible $H\in \cH$ and calculates the corresponding $\hat{W}_{H}$. We define
\begin{align}\label{psi:def}
\psi := \mathds{1} \Bigg[ \max_{H\in \cH} \hat{W}_H > \frac{\kappa}{4} \sqrt{\frac{M(\cH)}{\cR n}}\Bigg],
\end{align}
where $\kappa$ is a large enough absolute constant, and
\begin{align*}
 M(\cH) := \frac{\log(|\cH|)}{m(\cH)}, \text{ with } m(\cH):= \min_{H\in \cH}  |V(H)|. 
\end{align*}
The following theorem justifies the usage of the test defined in \eqref{psi:def}. 
\begin{theorem}\label{thm:upperbound}
	Given any fixed $\alpha\in (0,1)$, suppose that $\log (|\cH|) /n = o(1)$ and $|\cH|\geq 2/\alpha$. If 
	\begin{align*}
	\theta > \kappa \sqrt{\frac{M(\cH)}{\cR n}}
	\end{align*}
	for a large enough absolute constant $\kappa$, 
	when $n$ is large enough we have that the test $\psi$ of \eqref{psi:def} satisfies
	\begin{align*}
	\PP_{0,n}(\psi = 1) + \max_{\Theta\in\cS(\cG_1,\theta)}\PP_{\Theta,n}(\psi = 0) \leq \alpha.
	\end{align*}
\end{theorem}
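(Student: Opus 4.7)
The plan is to bound the Type I and Type II errors of $\psi$ separately, each by $\alpha/2$.

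For the Type I error, I exploit that under $H_0$ the samples $\bX_1,\ldots,\bX_n$ are i.i.d.\ Rademacher, so for each $H\in\cH$ the statistic $\hat W_H = (2n|E(H)|)^{-1}\sum_{l=1}^n \bX_l^\top A_H \bX_l$ is a quadratic form in the $nd$ independent Rademacher entries, where $A_H$ is the adjacency matrix of $H$ with $\|A_H\|_F^2=2|E(H)|$ and $\|A_H\|_{\rm op}\leq \sqrt{2|E(H)|}$. The Hanson--Wright inequality applied to the block-diagonal matrix with $n$ copies of $A_H$ yields $\P_0(\hat W_H>t) \leq 2\exp\bigl(-c\min\{n|E(H)|t^2,\, n|E(H)|t/\|A_H\|_{\rm op}\}\bigr)$. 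Substituting the threshold $t=(\kappa/4)\sqrt{M(\cH)/(\cR n)}$ and using the witnessing bound $|E(H)|\geq \cR(|V(H)|-1)\geq \cR m(\cH)/2$, the sub-Gaussian exponent becomes $\gtrsim \kappa^2 \log|\cH|$, while the sub-exponential exponent is $\gtrsim \kappa\sqrt{n\log|\cH|}$, which dominates the first for $n$ large by the hypothesis $\log|\cH|/n=o(1)$. A union bound over $\cH$ then gives $\P_0(\max_{H\in\cH}\hat W_H>t)\leq\alpha/2$ for $\kappa$ a sufficiently large absolute constant and $|\cH|\geq 2/\alpha$.

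For the Type II error, fix $\Theta\in\cS(\cG_1,\theta)$ with $G:=G(\Theta)\in\cG_1$. By the defining property of the witnessing set, there exists $H^*\in\cH$ that is a witnessing subgraph of $G$, so each edge of $H^*$ lies in $E(G)$. For $(i,j)\in E(H^*)$, Griffiths--Kelly--Sherman monotonicity for zero-field ferromagnetic Ising models lower-bounds $\EE_\Theta[X_i X_j]$ by the two-spin value $\tanh(2\theta_{ij})$, and since $\theta_{ij}\leq\|\Theta\|_F\leq 1/2$ we have $\tanh(2\theta_{ij})\geq 2\theta_{ij}-(2\theta_{ij})^3/3\geq \theta_{ij}\geq \theta$. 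Averaging yields $\EE_\Theta[\hat W_{H^*}]\geq \theta$. Writing $\hat W_{H^*}=n^{-1}\sum_l Y_l$ with i.i.d.\ bounded $Y_l\in[-1,1]$, I then bound $\text{Var}_\Theta(Y_l)\leq C/|E(H^*)|$ via a high-temperature expansion controlling the off-diagonal 4-point correlations $\EE_\Theta[X_a X_b X_c X_d]$ appearing in $\EE_\Theta[Y_l^2]$. Bernstein's inequality then gives $\P_\Theta(\hat W_{H^*}<\theta/4)\leq \exp(-c\, n\theta^2 |E(H^*)|)\leq \exp(-c\kappa^2\log|\cH|/2)\leq \alpha/2$, since $n\theta^2> \kappa^2 M(\cH)/\cR$ and $|E(H^*)|\geq \cR m(\cH)/2$. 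Because the rejection threshold of $\psi$ equals $(\kappa/4)\sqrt{M(\cH)/(\cR n)}<\theta/4$, the test correctly rejects with probability at least $1-\alpha/2$.

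\noindent\textbf{Main obstacles.} The two delicate steps are (i) verifying the sub-Gaussian scale $1/\sqrt{n|E(H)|}$ under the null via Hanson--Wright, since the weaker $1/\sqrt{n}$ scale from plain Hoeffding on $|Y_l|\leq 1$ would not match the witnessing bound $|E(H)|\gtrsim \cR m(\cH)$; and (ii) the $O(1/|E(H^*)|)$ variance bound on $Y_l$ under the Ising measure, which uses the high-temperature hypothesis $\|\Theta\|_F\leq 1/2$ essentially to damp the off-diagonal 4-point correlations entering $\EE_\Theta[Y_l^2]$.
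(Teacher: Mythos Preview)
Your outline is correct in structure, and your Type~I analysis via Hanson--Wright is a valid alternative to the paper's route. Under the null the $nd$ entries are i.i.d.\ Rademacher, so applying Hanson--Wright to the block-diagonal quadratic form with $n$ copies of $A_H$ gives exactly the sub-Gaussian scale $1/\sqrt{n|E(H)|}$ you need, and the sub-exponential term is handled by the assumption $\log|\cH|/n=o(1)$. The paper instead proves a $\psi_1$-norm bound $\|W_H\|_{\psi_1}\leq C|E(H)|^{-1/2}$ and invokes Bernstein-type concentration for sums of sub-exponential variables; the two arguments are essentially equivalent here.

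The real divergence is in the Type~II concentration under $\PP_\Theta$. You propose to bound $\text{Var}_\Theta(Y_l)\leq C/|E(H^*)|$ by controlling the off-diagonal four-point correlations via a high-temperature expansion, then apply Bernstein. This is where your proposal is incomplete: you flag it as an obstacle but give no mechanism, and a direct edge-by-edge bound on $\text{Cov}_\Theta(X_aX_b,X_cX_d)$ summed over $|E(H^*)|^2$ pairs is genuinely delicate. The paper avoids this entirely by proving the same $\psi_1$-norm bound $\|W_H\|_{\psi_1}\leq C|E(H)|^{-1/2}$ \emph{uniformly over all ferromagnetic $\Theta$ with $\|\Theta\|_F\leq 1/2$}, via a short MGF argument: writing
\[
\EE_\Theta\exp\bigl(\lambda W_H\bigr)=\frac{\EE_0\exp(\tfrac12 X^\top J X)}{\EE_0\exp(\tfrac12 X^\top\Theta X)},\qquad J=\Theta+\frac{\lambda}{|E(H)|}A_H,
\]
bounding the numerator by $\log\EE_0\exp(\tfrac12 X^\top J X)\leq -\tfrac12\sum_i\log(1-\lambda_i(J))\leq \|J\|_F^2$ (an inequality of Bhattacharya--Mukherjee), and lower-bounding the denominator by $1$ via ferromagnetism. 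Taking $\lambda\asymp\sqrt{|E(H)|}$ yields the $\psi_1$ bound, which gives both the variance estimate you need and the full Bernstein tail in one stroke. This is the key technical input you are missing; your four-point-correlation plan could in principle recover the variance bound, but it would require substantially more work than the paper's one-line partition-function identity.
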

We give a proof sketch of Theorem~\ref{thm:upperbound} as follows.  
The detailed proof is given in Section~\ref{section:six}.

\begin{proof}[Proof Sketch] The proof of Theorem~\ref{thm:upperbound} follows the following three steps. 

\textbf{Step 1. Bounding the $\psi_1$-norm of the test variable.} The key observation in the proof is to establish that each of the i.i.d. summands of $\hat W_H$ \eqref{WH:def} is a sub-exponential random variable. To prove this we use a result of \cite{bhattacharya2015inference} and our assumption that $\|\Theta\|_F \leq 1/2$.

\textbf{Step 2. Upper/lower bounding $\hat W_H$ under the null/alternative.} Using the result of \textbf{Step 1} and the second Griffith's inequality we establish an upper bound on $\hat W_H$ under the null, and a lower bound on $\hat W_H$ under any graph from the alternative hypothesis. 

\textbf{Step 3. Proof completion.} We show that under the assumptions of Theorem 3.3. there is a sufficient gap between the upper and lower bounds established in \textbf{Step 2}, which renders the final claim of the theorem.
\end{proof}

\begin{remark}
	We can compare our upper bound result with Corollary~\ref{cor:lowerbound}. For testing problems of the form \eqref{eq:testdefS2}, we can always choose a subgraph $H_*$ of $G_*$ as a witnessing subgraph (if there are multiple such subgraphs pick any of them), and construct $\cH$ to be the set consisting of all graphs isomorphic to $H_*$. 
	 For this $\cH$ we have $|\cH| \leq \frac{d!}{(d-|V(H_*)|)!}$. Therefore 
	\begin{align*}
		M(\cH) \leq |V(H_*)|^{-1} \log[d! / (d-|V(H_*)|)!] \leq \log(d).
	\end{align*}
	If $s=O(d^{1/2 - c})$ for some $c>0$, $\log(d/s^2)$ is also of order $\log(d)$. Therefore the rate given by Theorem~\ref{thm:upperbound} matches Corollary~\ref{cor:lowerbound}. When applying Theorem~\ref{thm:upperbound} to specific detection problems, potentially there might exist different choices of $\cH$, which may result in lower values of $M(\cH)$. 
\end{remark}

\subsection{Examples}\label{subsection:upperexample}
In this section we apply Theorem~\ref{thm:upperbound} to the examples we discussed in Section~\ref{section:introduction} and Section~\ref{section:lowerboundexample}. 
\begin{example}[Empty graph versus non-empty graph] 
	Consider testing empty graph versus non-empty graph defined in Section~\ref{section:introduction}. If  $\log(d)/n = o(1)$, $4/[d(d-1)] \leq \alpha$ and 
	\begin{align}\label{ex1:upperbound}
	\theta > \kappa \sqrt{\frac{\log d}{n}}
	\end{align}
	for a large enough constant $\kappa$, then when $n$ is large enough, we have
	\begin{align}\label{ex1:typeItypeII:err}
	\PP_{0,n}(\psi = 1) + \max_{\Theta\in\cS(\cG_1,\theta)}\PP_{\Theta,n}(\psi = 0) \leq \alpha.
	\end{align}
\end{example}
\begin{proof}
	 In this example $\cR = 1$, and hence $\cH = \{\text{single-edge graphs}\}$ is a witnessing set of $\cG_1$. We have $|\cH| = d(d-1)/2$, $m(\cH)=2$ and $M(\cH) = \log(|\cH|)/m(\cH)\leq \log d$. Therefore by Theorem~\ref{thm:upperbound}, if
	 \eqref{ex1:upperbound} holds for a large enough constant $\kappa$, then when $n$ is large enough, we have that \eqref{ex1:typeItypeII:err} holds.
\end{proof}

\begin{example}[Clique Detection] \label{ex:uppercliquedetection}
	For the clique detection problem defined in Section~\ref{section:introduction}, if $s\log (ed/s)/n = o(1)$, $(d/s)^s \geq 2/\alpha$ and 
	\begin{align}\label{ex2:upperbound}
	\theta > \kappa \sqrt{\frac{\log (ed/s)}{sn}}
	\end{align}
	for a large enough constant $\kappa$, then when $n$ is large enough, we have
	\begin{align}\label{ex2:typeItypeII:err}
	\PP_{0,n}(\psi = 1) + \max_{\Theta\in\cS(\cG_1,\theta)}\PP_{\Theta,n}(\psi = 0) \leq \alpha.
	\end{align}
\end{example}
\begin{proof}
	In this example we have $\cR = \lceil s/2 \rceil$, and $\cH = \{s\text{-cliques}\}$ is a witnessing set of $\cG_1$. We have $|\cH| = \binom{d}{s}$, and therefore $(d/s)^s \leq |\cH| \leq (e d/s)^s$. We have $m(\cH) = s$ and $M(\cH) = \log(|\cH|)/m(\cH)\leq \log (ed/s)$. Therefore by Theorem~\ref{thm:upperbound}, if \eqref{ex2:upperbound} holds
	for a large enough constant $\kappa$, then when $n$ is large enough, we have that \eqref{ex2:typeItypeII:err} holds.
\end{proof}

\begin{example}[Star Detection] 
	For the star detection problem defined in Section~\ref{section:introduction}, if $\log (d)/n = o(1)$, $4/[d(d-1)]\leq \alpha$ and 
	\begin{align}\label{ex3:upperbound}
	\theta > \kappa \sqrt{\frac{\log (ed/s)}{n}}
	\end{align}
	for a large enough constant $\kappa$, then when $n$ is large enough, we have
	\begin{align}\label{ex3:typeItypeII:err}
	\PP_{0,n}(\psi = 1) + \max_{\Theta\in\cS(\cG_1,\theta)}\PP_{\Theta,n}(\psi = 0) \leq \alpha.
	\end{align}
\end{example}
\begin{proof}
    In this example we have $\cR = 1$, and $\cH = \{(s-1)\text{-stars}\}$ is a witnessing set of $\cG_1$. We have $|\cH| = s\binom{d}{s}$, and therefore $s(d/s)^s \leq |\cH| \leq s(e d/s)^s$. We have $m(\cH) = s$. When $s = o(\sqrt{d})$ we have $s \leq (ed/s)^s$ and $M(\cH) = \log(|\cH|)/m(\cH)\leq 2\log (ed/s) $. Therefore by Theorem~\ref{thm:upperbound}, if \eqref{ex3:upperbound} holds
	for a large enough constant $\kappa$, then when $n$ is large enough, we have that \eqref{ex3:typeItypeII:err} holds.
\end{proof}

\begin{example}[Community structure detection]
	Consider the community structure detection problem defined in Section~\ref{section:introduction}. If
	$(l\lor k)\log [ed/(l\lor k)]/n = o(1)$, $[d/(l\lor k)]^{(l\lor k)} \geq 2/\alpha$ and 
	\begin{align*}
		\theta > \kappa \sqrt{\frac{\log [ed/(l\lor k)]}{(l\lor k)n}}
	\end{align*}
	for a large enough constant $\kappa$, then when $n$ is large enough, we have
	\begin{align*}
	\PP_{0,n}(\psi = 1) + \max_{\Theta\in\cS(\cG_1,\theta)}\PP_{\Theta,n}(\psi = 0) \leq \alpha.
	\end{align*}
\end{example}
\begin{proof}
	If $l \geq k$, we have $\cR = \lceil l/2 \rceil$, and we can choose $\cH = \{l\text{-cliques}\}$ as a witnessing set of $\cG_1$; if $l < k$, we have $\cR = \lceil k/2 \rceil$, and $\cH = \{k\text{-cliques}\}$ is a witnessing set of $\cG_1$. The rest of proof is identical to the clique detection problem, and we omit the details.
\end{proof}

\section{Computational Lower Bound}\label{section:complowerbound}
In this section we give our main result on the computational lower bound of structure testing problems in Ising models. Our result is based on a sparse PCA conjecture. Denote by $1_{i_1,\ldots, i_s} = e_{i_1}+\cdots+e_{i_s} \in \RR^d$ the vector whose $i_1,\ldots,i_s$-th entries are $1$ and other entries are $0$. Let
\begin{align*}
\cS_\sigma = \big\{\rmI + \sigma 1_{i_1,\ldots, i_s}^{}1_{i_1,\ldots, i_s}^T\in \RR^{d\times d}: i_1,\ldots, i_s \in \{1,\ldots,d\} \text{ are distinct} \big\}
\end{align*}
be the set of covariance matrices from the Gaussian spiked model. In sparse PCA, we consider the hypothesis testing problem for $n$ i.i.d samples $\bZ_1,\ldots, \bZ_n\in \RR^d$:
\begin{align}\label{eq:PCAdef}
\rmH_0^{\mathrm{PCA}}: \bZ_1,\ldots,\bZ_n \sim N(0,\rmI) \text{ versus }\rmH_1^{\mathrm{PCA}}: \bZ_1,\ldots,\bZ_n \sim N(0,\Sigma), \Sigma\in \cS_\sigma.
\end{align}
We denote by $\PP_{\rmI,n}$ and $\PP_{\Sigma,n}$ the probability measure under $\rmH_0^{\mathrm{PCA}}$ and $\rmH_1^{\mathrm{PCA}}$ respectively.
\begin{conjecture}[Computational Hardness of Sparse PCA]\label{conj:PCAassumption}
	Let $\delta > 0$ be any fixed small constant. If $\sigma \leq \eta [n^{-(1/2+\delta)} \land s^{-(1+\delta)}]$ for some small enough constant $\eta$, then for any polynomial time test $\psi$, we have
	\begin{align*}
	\liminf_{n \rightarrow \infty} \bigg[\PP_{\rmI,n}(\psi = 1) + \max_{\Sigma\in \cS_\sigma}\PP_{\Sigma,n}(\psi = 0)\bigg]\geq \frac{1}{4}. 
	\end{align*}
\end{conjecture}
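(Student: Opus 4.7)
The plan is to establish this as a reduction from the Planted Clique (PC) conjecture, following the general framework initiated by \cite{berthet2013complexity} and refined in \cite{gao2014sparse, brennan2018reducibility, brennan2019optimal}. Recall the PC conjecture: no polynomial-time algorithm can distinguish an Erd\H{o}s--R\'enyi graph $G(N,1/2)$ from $G(N,1/2)$ with a planted clique of size $k = N^{1/2-\epsilon}$ for any fixed $\epsilon>0$ with probability bounded away from $1/2$. The goal is to construct, in polynomial time, a map from any instance $(N,k)$ of PC to an instance $(n,d,s,\sigma)$ of the sparse PCA testing problem \eqref{eq:PCAdef}, such that (i) the image of the null PC distribution is close in total variation to $\PP_{\rmI,n}$, (ii) the image of the planted distribution is close in total variation to $\PP_{\Sigma,n}$ for some $\Sigma\in\cS_\sigma$, and (iii) the parameter scaling yields $\sigma \lesssim n^{-(1/2+\delta)} \wedge s^{-(1+\delta)}$ whenever $k \lesssim N^{1/2-\epsilon}$.

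For the construction, I would adopt the Gaussianization-plus-rejection-kernel approach: given a PC adjacency matrix $A\in\{0,1\}^{N\times N}$, produce a data matrix $\bZ\in\RR^{n\times d}$ row-by-row by combining sub-sampled rows of $A$ with appropriate independent Gaussian smoothing so that, entry by entry, the conditional distribution under the null matches $N(0,1)$ up to vanishing TV-distance, while under the planted alternative an $s$-sparse rank-one spike of strength $\sigma$ is implanted on the $s=k$ coordinates indexing the clique. Identifying $d$ with (a function of) $N$ and $n$ with another function of $N$, one tracks how the signal $\sigma$ scales with $k$: the standard computation gives $\sigma \asymp k/N$ (equivalently $s/N$), so to force $\sigma \lesssim s^{-(1+\delta)}$ we take $N \asymp s^{2+\delta}$, which is consistent with $k=s = N^{1/2-\epsilon}$ for $\epsilon = \delta/(2(2+\delta))$. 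To simultaneously obtain $\sigma \lesssim n^{-(1/2+\delta)}$ one tunes $n$ accordingly (e.g., $n \asymp N^{1-2\epsilon'}$ for suitable $\epsilon'$), and one verifies that the total-variation distance between the reduction output and the target sparse-PCA distribution is $o(1)$ in both hypotheses.

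The main obstacle will be controlling the total-variation error of the reduction uniformly over the whole two-parameter region $\sigma \leq \eta[n^{-(1/2+\delta)}\wedge s^{-(1+\delta)}]$. Existing reductions typically operate at a single boundary point of the sparse-PCA phase diagram and certify hardness only there; extending to the full region requires either multiple reductions (one per regime of $s$ versus $n$) and gluing them together, or a cloning/padding argument that increases $n$ or $d$ as needed while preserving the ambient Gaussian distribution. A secondary technical issue is the sharpness of the Gaussianization step, namely the key moment-comparison inequality (which, as hinted by the abstract, the paper itself develops for even moments of Rademacher sums); this is essentially what powers the TV-distance bound between the Bernoulli PC input and the Gaussian PCA output at the required scale. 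Once that moment control is in hand, plugging it into the reduction framework and verifying the parameter matching is largely bookkeeping, but the constants $\eta$ and $\delta$ must be chosen small enough that the accumulated TV-error across all sample rows remains $o(1)$.
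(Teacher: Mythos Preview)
The statement you are attempting to prove is a \emph{conjecture}, not a theorem, and the paper does not prove it. The paper explicitly treats Conjecture~\ref{conj:PCAassumption} as a standing assumption, remarking only that it ``is derived by \cite{gao2014sparse} under the widely believed planted clique conjecture and additional assumptions'' and that \cite{brennan2018reducibility,brennan2019optimal} establish it in the regimes $s\gg\sqrt{d}$ and $s<\sqrt{d}$ respectively. There is no proof in the paper to compare against; the authors simply import the statement from the literature and build Theorem~\ref{thm:Ising:computation} on top of it.

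Your sketch of a planted-clique-to-sparse-PCA reduction is broadly the right outline of what those cited references do, so in that sense it is not wrong as a description of how the conjecture is justified elsewhere. However, you have misattributed one key ingredient: the sharp even-moment bounds for Rademacher sums developed in this paper (Lemmas~\ref{tricky:combinatorial:lemma}--\ref{bound:on:a2m2}) are \emph{not} used to Gaussianize a planted-clique instance into sparse PCA. They are used in the proof of Theorem~\ref{thm:CWN:closeto:Gaussian}, which controls the total variation between the Curie--Weiss normal distribution and a Gaussian, and this feeds into the reduction from sparse PCA to the Ising clique model (Theorem~\ref{thm:Ising:computation}). In other words, the paper's novel moment machinery sits one step downstream of the conjecture, in the sparse-PCA-to-Ising reduction, not in any proof of the conjecture itself. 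Your proposal conflates these two distinct reductions.
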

Conjecture~\ref{conj:PCAassumption} is derived by \cite{gao2014sparse} under the widely believed planted clique conjecture and additional assumptions which essentially require that $2n\leq d\leq n^a$ for some constant $a>1$ and $n[\log(n)]^5\leq C s^4$ for some small enough constant $C>0$. It is also studied in \cite{berthet2013complexity}, \cite{brennan2018reducibility} and  \cite{brennan2019optimal}. In particular the latter two papers prove Conjecture \ref{conj:PCAassumption} for the two regimes $s \gg \sqrt{d}$ \cite{brennan2018reducibility} and $s < \sqrt{d}$ \cite{brennan2019optimal} respectively based on the planted clique conjecture.


In the following, based on the Gaussian random vectors from the sparse PCA problem, we propose a polynomial time reduction algorithm that constructs $n$ $d$-dimensional random vectors which cannot be distinguished from $n$ samples from the $d$-dimensional Ising model with a parameter matrix $\Theta$. Importantly, this reduction only needs to be done for clique graphs because  detecting the $s$-clique containing $G_*$ is always easier than directly detecting $G_*$ (recall that we are testing whether the underlying graph is empty, or contains a graph isomorphic to $G_*$). Furthermore, in the sparse PCA problem each $\Sigma\in \cS_\sigma$ corresponds to an $s$-clique. More specifically, for any index set $\cI\subseteq \{1,\ldots,d\}$ of size $s$ representing the position of a clique, we consider probability measure $\PP_{\Theta, n}$ of the Ising model with parameter matrix $\Theta = \theta \cdot [\mathrm{1}(i,j\in \cI, i\neq j)]_{d\times d}$ (when $\theta = 0$ we denote $\PP_{0,n}$ the Ising clique model under the null hypothesis), and $\PP_{\Sigma, n}$ for the multivariate Gaussian distribution with mean $0$ and covariance matrix $\Sigma = \rmI + \sigma 1_\cI^{} 1_\cI^T$. 

We are now ready to state the main result of this section. We have

\begin{theorem}[Computational Hardness of Ising Clique Detection]\label{thm:Ising:computation}
	Suppose that Conjecture \ref{conj:PCAassumption} holds. If $\theta \leq \eta [n^{-(1/2+\delta)} \land s^{-(1+\delta)}]$ for some small enough constant $\eta$, then for any polynomial time test $\psi$ and an $s$-clique graph $G_*$, we have
	\begin{align*}
	\liminf_{n \rightarrow \infty} \bigg[\PP_{0,n}(\psi = 1) + \max_{\Theta\in \cS(\cG_1(G_*), \theta)}\PP_{\Theta,n}(\psi = 0)\bigg]\geq \frac{1}{4}. 
	\end{align*}
\end{theorem}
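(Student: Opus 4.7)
The plan is to construct a polynomial time reduction from sparse PCA to Ising clique detection and conclude by contradiction with Conjecture~\ref{conj:PCAassumption}. Given $n$ samples $\bZ_1,\ldots,\bZ_n$ from the sparse PCA model, define $\bX_i := \sign(\bZ_i)$ coordinatewise. Under $\rmH_0^{\mathrm{PCA}}$, $\bZ_i \sim N(0,\rmI)$ yields $\bX_i$ uniform on $\{\pm 1\}^d$, matching the Ising null law $\PP_{0,n}$ exactly. Under $\rmH_1^{\mathrm{PCA}}$ with $\Sigma = \rmI + \sigma 1_\cI 1_\cI^T$, the coordinates of $\bX_i$ outside $\cI$ remain independent Rademacher, while those in $\cI$ follow the dichotomized Gaussian distribution; denote the resulting joint law on $n$ samples by $\PP'_{\sigma,\cI,n}$.

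The core step is to show that for a suitably chosen $\theta \asymp \sigma$ and $\Theta = \theta\,[\mathrm{1}(i,j\in\cI,\,i\neq j)]_{d\times d}$, one has $d_{\mathrm{TV}}(\PP'_{\sigma,\cI,n},\,\PP_{\Theta,n}) = o(1)$ uniformly in $\cI$. Both product measures factorize trivially on $\cI^c$, and inside $\cI$ each single-sample marginal depends only on the magnetization $Y := \sum_{i\in\cI}X_i$. Via the Hubbard--Stratonovich identity, the Ising clique law equals the Rademacher mixture $\PP_\Theta(X_\cI = x) \propto \EE_W \prod_{i\in\cI}(1 + e^{-2\sqrt{2\theta}\,W x_i})^{-1}$ with $W\sim N(0,1)$, whereas the dichotomized Gaussian is the parallel mixture with probit link $\Phi(\sqrt{\sigma}g)$. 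By tensorization, $d_{\mathrm{TV}}(P^n,Q^n)\leq n\,d_{\mathrm{TV}}(P,Q)$, so it suffices to prove $d_{\mathrm{TV}}(\PP'_{\sigma,\cI,1},\,\PP_{\Theta,1}) = o(1/n)$.

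To obtain this single-sample bound, I would Taylor-expand both laws to sufficiently high order in $\sqrt{\theta}$ and $\sqrt{\sigma}$ around the uniform law on $\{\pm 1\}^s$; each term reduces to an expectation of a polynomial in $Y$ against i.i.d.\ Rademacher $X_i$'s, so the comparison ultimately hinges on even moments $\EE[Y^{2k}]$ of Rademacher sums. Choosing $\theta$ so that the leading corrections match (equating the first Taylor coefficients of $\tanh(\sqrt{2\theta}W)$ and $2\Phi(\sqrt{\sigma}g)-1$, which fixes $\theta = \sigma/\pi$), the leading discrepancy cancels, leaving a remainder controlled by higher even moments combined with powers $\theta^k,\sigma^k$. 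This is precisely where sharp inequalities for $\EE[Y^{2k}]$ enter: crude Khintchine-type bounds $\EE[Y^{2k}]\leq (Cks)^k$ must be refined into tight estimates that pin down the leading order exactly, so that the residual is of order $\sigma^2\cdot\mathrm{poly}(s)$ rather than $\sigma\cdot\mathrm{poly}(s)$. The assumption $\theta\leq\eta s^{-(1+\delta)}$ ensures $\theta s = O(s^{-\delta})$ so that the series converges absolutely with small remainder, while $\sigma\leq\eta n^{-(1/2+\delta)}$ ensures the final single-sample TV is $o(n^{-1})$, as required.

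Once the TV bound is in place, the contradiction argument is standard. Any polynomial time test $\psi$ with $\liminf$ risk strictly below $1/4$ on the Ising clique detection problem would, composed with the reduction $\bZ\mapsto\sign(\bZ)$, yield a polynomial time test for sparse PCA whose risk differs from that of $\psi$ by at most $o(1)$, violating Conjecture~\ref{conj:PCAassumption} at the matching scale $\sigma\asymp\theta$. Since $G_*$ is an $s$-clique, every parameter matrix $\Theta$ produced by the reduction belongs to $\cS(\cG_1(G_*),\theta)$, so the Ising hardness transfers to the full alternative class as stated. The main obstacle is the sharp Rademacher moment inequality in the third paragraph: it must simultaneously be tight enough to drive the residual TV below $1/n$ and explicit enough to track the joint dependence on $s$ and on the Gaussian tail of the mixing field $W$.
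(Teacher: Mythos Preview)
Your reduction via the coordinatewise sign map is exactly the one the paper uses, and your identification of the Hubbard--Stratonovich representation for the Curie--Weiss side and the Gaussian mixture representation for the dichotomized-Gaussian side is also correct. The gap is in how you pass from one sample to $n$ samples and in how much cancellation your Taylor matching actually buys.

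The tensorization $d_{\mathrm{TV}}(P^n,Q^n)\le n\,d_{\mathrm{TV}}(P,Q)$ forces you to drive the single-sample total variation below $o(1/n)$. With your choice $\theta=\sigma/\pi$, only the order-$\theta$ terms of the two marginals on $\{\pm1\}^s$ agree; at order~$\theta^2$ the Curie--Weiss ratio contains the term $-\theta^2 s\,Y^2$ (from expanding $e^{\theta Y^2}/\EE_0 e^{\theta Y^2}$, the normalizing constant brings in $\theta s$), while the dichotomized-Gaussian ratio has no matching $s$-linear coefficient in front of $Y^2$. The residual density ratio is therefore of size $\theta^2 s\,Y^2$ in the bulk, so $d_{\mathrm{TV}}(P,Q)\asymp \theta^2 s\,\EE_0|Y^2|\asymp \theta^2 s^2$. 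Under $\theta\le\eta\,n^{-(1/2+\delta)}$ this gives $n\,d_{\mathrm{TV}}(P,Q)\lesssim n^{-2\delta}s^2$, which does not vanish (no relation between $s$ and $n^{\delta}$ is assumed), and the argument breaks. Switching to Hellinger or KL tensorization does not rescue this route either, since the per-sample KL from the same residual is $\asymp\theta^4 s^4$, and $n\theta^4 s^4$ again need not be $o(1)$.

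The paper avoids comparing marginals on $\{\pm1\}^s$ altogether. It works entirely at the latent level: the Curie--Weiss model is conditionally i.i.d.\ given $Y'$ drawn from the ``Curie--Weiss normal'' density $p_{Y'}(y)\propto\cosh(\sqrt{2\theta}y)^s e^{-y^2/2}$, and the sign-Gaussian is conditionally i.i.d.\ given a rescaled Gaussian $Y\sim N(0,(1-2s\theta)^{-1})$; this rescaling is what forces the choice $\sigma=\pi\theta/(1-2s\theta)$ rather than $\sigma=\pi\theta$. The $n$-sample TV is then split via the coupling inequality (Fact~3.1(5) in Brennan--Bresler) into $\mathrm{TV}(\cL(\bY),\cL(\bY'))+\EE_{\bY}\mathrm{TV}(\text{conditionals})$, and each piece is bounded by Pinsker plus KL tensorization, yielding $\sqrt{n\cdot\mathrm{KL}}$-type control. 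The sharp Rademacher-moment inequalities you allude to enter only in the first piece: they are used to estimate the normalizing constant $\int\cosh(\sqrt{2\theta}y)^s e^{-y^2/2}dy=\EE\exp(\theta(\sum X_i)^2)$ to enough precision to show that the CWN law is within per-sample KL $O(s\theta^3)$ of $N(0,(1-2s\theta)^{-1})$. This delivers $n$-sample TV $=O(\sqrt{n s\theta^3})=o(1)$ under the stated hypotheses, a rate your marginal expansion cannot reach.
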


In order to prove the computational hardness result above, we need to formalize a polynomial time transformation which (approximately in total variation) maps the null hypothesis of a sparse PCA model to the null hypothesis of the Ising clique model, and simultaneously maps the corresponding alternative hypothesis of sparse PCA to the alternative hypothesis of the Ising clique model up to a small total variation. 

Our construction is surprisingly simple: taking the signs of Gaussians and showing that they are close enough to an Ising model in total variation. Clearly, for this type of reduction, the null hypothesis of a sparse PCA model is mapped to exactly the null hypothesis of Ising clique model. Similarly, under the alternative hypothesis of the sparse PCA model, the vertices out of the clique are still mapped to Rademacher random variables, which also correspond to the vertices out of the Ising clique. Therefore, we only need to focus on the vertices in the Gaussian and Ising cliques under the alternative hypotheses. We introduce the following notation for the random Gaussian/Ising vectors corresponding to the vertices in the clique. Let $\bW_1,\ldots \bW_n $ and $\bV_1,\ldots,\bV_n $ be $s$-dimensional i.i.d. Gaussian and Ising random vectors with parameter matrices $\rmI_{s \times s} + \sigma 1_{[s]} 1_{[s]}^T$ and $\theta (1_{[s]} 1_{[s]}^T - \rmI_{s \times s} )$ respectively. Denote $\bU_i = \mathrm{sign} (\bW_i)$, $i\in [n]$.


Before we introduce the proof details
, it is necessary to determine the parameter $\sigma$ for any fixed $\theta$. Part of our choice of $\sigma$ is based on matching the first terms in the Taylor expansions of the functions $x \mapsto \Phi(x)$ (here $\Phi(x)$ is the standard normal cumulative density function) and $x \mapsto \exp(c x) / [\exp(cx) + \exp(-cx)]$ for some constant $c$ (turns out that the ``best'' $c = \sqrt{2/\pi}$). Supposing that $s\theta < \frac{1}{2}$, we set $\sigma = \pi\theta / (1 - 2s\theta)$.


Recall now that if one has a model $\bW \sim N(0, \rmI_{s \times s} + \sigma 1_{[s]} 1_{[s]}^T)$, this is equivalent to having generated i.i.d. $Y \sim N(0,1)$ and $Y_i \sim N(0,1)$ for $i \in [s]$, and set $W_i = Y_i + \sqrt{\sigma}Y$. This means that in terms of signs of $\bW$, i.e., $\sign(W_i)$, the generation is as follows: first generate $Y \sim N(0,1)$ and then generate $1$ or $-1$ conditionally i.i.d. on $Y$ with probability $\Phi(\sqrt{\sigma}Y)$ or $1-\Phi(\sqrt{\sigma}Y)$ respectively. Our next lemma, which is the key to the proof of bounding the total variation argues that similar conditional i.i.d. decomposition holds for the Curie-Weiss model. Recall that the Curie-Weiss model is given by
\begin{align*}
    \PP_{\theta}(\bV = \bv) \varpropto \exp\bigg(\theta \sum_{i \neq j} v_i v_j\bigg)\varpropto \exp\bigg(\theta \bigg(\sum_{i} v_i\bigg)^2\bigg),
\end{align*}
for $\bv \in \{\pm 1\}^s$. Clearly the Curie-Weiss model corresponds to the ``clique'' part of the Ising clique model with parameter matrix $\Theta$. 

\begin{lemma}[Generating Curie-Weiss as Conditional i.i.d.]\label{CWN:lemma} The Curie-Weiss model with $s$ vertices and parameter $\theta \geq 0$ can be generated as: 1) generate $Y'$ coming from a distribution with density proportional to $p_{Y'}(y) \varpropto \cosh(\sqrt{2\theta}y)^s \exp(-y^2/2)$ and 2) generate $V_i |Y'$ i.i.d. with probabilities $\PP(V_i = 1 | Y') = \frac{\exp(\sqrt{2\theta}Y')}{\exp(\sqrt{2\theta}Y') + \exp(-\sqrt{2\theta}Y')}$. In addition, the normalizing constant $Z_{Y'}$ of $p_{Y'}(y)$ satisfies that 
\begin{align}\label{ZY:equation}
    \frac{Z_{Y'}}{\sqrt{2\pi}} = \sum_{k = 0}^s \frac{{s \choose k}\exp(\theta (2k - s)^2)}{2^s}.
\end{align}
\end{lemma}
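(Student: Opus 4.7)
The plan is to use the Hubbard--Stratonovich identity, which is a classical linearization trick for the quadratic term in the exponent of the Curie--Weiss model. Specifically, since $\exp(a^2/2) = \frac{1}{\sqrt{2\pi}} \int \exp(-y^2/2 + ay) dy$ for every real $a$, applied with $a = \sqrt{2\theta}\sum_i v_i$ this gives
$$ \exp\!\bigg(\theta\Big(\sum_i v_i\Big)^2\bigg) = \frac{1}{\sqrt{2\pi}}\int_{-\infty}^{\infty} \exp(-y^2/2)\prod_{i=1}^s \exp(\sqrt{2\theta}\, y\, v_i)\, dy. $$
This rewrites the Curie--Weiss weight as an integral over $y$ of a product which factorizes across the coordinates $v_i$, which is exactly the structure required for the ``conditional i.i.d.'' representation.

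Next I would identify the joint density on $(y,\bv) \in \mathbb{R} \times \{\pm 1\}^s$ proportional to $\exp(-y^2/2)\prod_i \exp(\sqrt{2\theta}\, y\, v_i)$. Marginalizing over $\bv$ by summing the factorized product gives $\prod_i(e^{\sqrt{2\theta}y} + e^{-\sqrt{2\theta}y}) = (2\cosh(\sqrt{2\theta}y))^s$, which up to normalization is exactly the density $p_{Y'}$ in the statement (the $2^s$ factor will be absorbed into the overall constant). Conditioning on $Y'=y$, the factorization shows that the $V_i$ are independent with $\mathbb{P}(V_i = 1 \mid Y'=y) \propto \exp(\sqrt{2\theta}\, y)$ and $\mathbb{P}(V_i = -1 \mid Y'=y) \propto \exp(-\sqrt{2\theta}\, y)$, which matches the stated conditional probabilities after normalizing by $e^{\sqrt{2\theta}y} + e^{-\sqrt{2\theta}y}$. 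Thus the joint distribution obtained by this two-step sampling procedure marginalizes in $\bv$ to the Curie--Weiss law.

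For the identity \eqref{ZY:equation}, I would equate two computations of the Curie--Weiss partition function $Z_{\mathrm{CW}} := \sum_{\bv\in\{\pm 1\}^s}\exp(\theta(\sum_i v_i)^2)$. On the one hand, stratifying by $k=$ (number of $+1$ entries), we have $\sum_i v_i = 2k-s$, yielding $Z_{\mathrm{CW}} = \sum_{k=0}^s \binom{s}{k}\exp(\theta(2k-s)^2)$. On the other hand, integrating the Hubbard--Stratonovich identity above term-by-term gives $Z_{\mathrm{CW}} = \frac{2^s}{\sqrt{2\pi}} \int \cosh(\sqrt{2\theta}y)^s \exp(-y^2/2)\,dy = \frac{2^s Z_{Y'}}{\sqrt{2\pi}}$. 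Equating the two expressions and dividing by $2^s$ yields \eqref{ZY:equation}.

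There is no real obstacle here: the argument is a routine application of Hubbard--Stratonovich plus a combinatorial count. The only minor care needed is to keep track of the $2^s$ and $\sqrt{2\pi}$ factors when matching normalizing constants, and to confirm that $p_{Y'}$ is integrable (which is automatic since $\cosh(\sqrt{2\theta}y)^s \exp(-y^2/2) \le e^{s\sqrt{2\theta}|y|-y^2/2}$ decays Gaussian-ly at infinity).
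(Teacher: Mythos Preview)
Your proposal is correct and is essentially the same argument as the paper's, just organized in the opposite direction: the paper starts from the two-step sampling procedure, integrates out $Y'$ using the Gaussian integral $\int e^{-y^2/2+ay}\,dy=\sqrt{2\pi}\,e^{a^2/2}$ with $a=\sqrt{2\theta}(2k-s)$, and verifies the resulting marginal on $\bv$ is proportional to $\exp(\theta(2k-s)^2)$, whereas you start from the Curie--Weiss weight and apply Hubbard--Stratonovich to exhibit the mixture. The underlying computation and the derivation of \eqref{ZY:equation} are identical.
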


The proof of Lemma~\ref{CWN:lemma} is deferred to the supplementary material. Call the distribution with density $p_{Y'}(y) \varpropto \cosh(\sqrt{2\theta}y)^s \exp(-y^2/2)$ Curie-Weiss normal (CWN) with parameters $s$ and $\theta$. Next we will argue that $n$ samples from the CWN distribution are close in total variation to $n$ samples from the Gaussian distribution with variance $(1 - 2s\theta)^{-1}$ provided that $\theta$ is small. 

\begin{theorem}[CWN is close to Gaussian]\label{thm:CWN:closeto:Gaussian}
Let $Y'$ be a CWN random variable with parameters $s$ and $\theta$, and let $Y \sim N(0,(1 - 2s\theta)^{-1})$ be a Gaussian random variable.  Suppose we have $n$ i.i.d. copies of each of the two random variables. We have that
\begin{align*}
    \operatorname{TV}(\mathcal{L}(\{Y_i\}_{i \in [n]}), \mathcal{L}(\{Y_i'\}_{i \in [n]})) \leq C \sqrt{n \theta^2 \cdot \sum_{i = 1}^5\frac{(s\theta)^i}{(1 - 2s\theta)^{i-1/2}}},
\end{align*}
where $C$ is an absolute constant, provided that $s\theta < c < \frac{1}{2}$ for some sufficiently small constant $c$.
\end{theorem}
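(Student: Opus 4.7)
The plan is to bound the single-sample $\chi^{2}$-divergence between $p:=\mathcal{L}(Y')$ and $q:=\mathcal{L}(Y)$, tensorize over the $n$ i.i.d.\ copies via the product-measure identity for $\chi^{2}$, and convert to total variation via $\mathrm{TV}\leq\sqrt{\chi^{2}/2}$. Observing that $p(y)\propto\exp(-y^{2}(1-2s\theta)/2)\cdot e^{g(y)}$ with $g(y):=s\log\cosh(\sqrt{2\theta}\,y)-s\theta y^{2}\leq 0$, we have $p=q\,e^{g}/\mathbb{E}_{q}[e^{g}]$, so $\chi^{2}(p,q)=\mathbb{E}_{q}[e^{2g}]/\mathbb{E}_{q}[e^{g}]^{2}-1$. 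The change of variables $\xi:=y\sqrt{1-2s\theta}\sim N(0,1)$ together with the substitution $\rho:=2\theta/(1-2s\theta)$ collapses both moments into evaluations of a single function $G(c):=\mathbb{E}_{\xi}[\exp(c\,h(\xi))]$, where $h(\xi):=\log\cosh(\sqrt{\rho}\,\xi)-\rho\xi^{2}/2\leq 0$, and produces the clean identity $\chi^{2}(p,q)=G(2s)/G(s)^{2}-1$.

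Next I would exploit that $\log G(c)$ is the cumulant generating function of the random variable $h(\xi)$ at parameter $c$, yielding
\[
\log\frac{G(2s)}{G(s)^{2}}=\sum_{k\geq 2}\kappa_{k}(h)\,\frac{(2^{k}-2)\,s^{k}}{k!}.
\]
The $k=1$ term drops out automatically, which is the algebraic payoff of tuning the variance of $q$ to absorb the quadratic mean-field piece of $p$. The Taylor series $\log\cosh(x)=x^{2}/2-x^{4}/12+x^{6}/45-\cdots$ shows $h(\xi)=-\rho^{2}\xi^{4}/12+O(\rho^{3}\xi^{6})$, so each $\kappa_{k}(h)$ is of order $\rho^{2k}$ as $\rho\to 0$. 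Converting cumulants into moments using the Gaussian identities $\mathbb{E}[\xi^{2m}]=(2m-1)!!$ --- equivalently, after the Hubbard--Stratonovich rewriting $G(c)=(1+c\rho)^{-1/2}\mathbb{E}[\exp(\rho T_{c}^{2}/[2(1+c\rho)])]$, by invoking sharp inequalities for the even moments $\mathbb{E}[T_{c}^{2k}]$ of the Rademacher sum $T_{c}=\epsilon_{1}+\cdots+\epsilon_{c}$ --- one groups the contributions by their order in $s\theta$. Substituting $\rho=2\theta/(1-2s\theta)$ and using $s\theta<c<1/2$ to absorb the tail $k\geq 6$ into the first five cumulants produces
\[
\chi^{2}(p,q)\leq C\,\theta^{2}\sum_{i=1}^{5}\frac{(s\theta)^{i}}{(1-2s\theta)^{i-1/2}},
\]
where the half-integer exponents arise from the Gaussian normalizer $(1-2s\theta)^{-1/2}$ generated by the change of variables.

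Tensorization then uses $\chi^{2}(p^{n},q^{n})=(1+\chi^{2}(p,q))^{n}-1\leq 2n\,\chi^{2}(p,q)$, valid once $n\chi^{2}(p,q)$ is bounded (which is ensured by the hypothesis on $s,\theta$), and the standard inequality $\mathrm{TV}\leq\sqrt{\chi^{2}/2}$ delivers the announced bound. The main obstacle I anticipate lies in the cumulant-to-moment step: each $\kappa_{k}(h)$ expands into a combinatorial double sum, and showing that the contributions telescope into the specific five-term expression with the precise $(1-2s\theta)^{i-1/2}$ scaling is where the sharp Rademacher moment inequalities advertised in the abstract become essential --- generic Khintchine-type bounds would likely give a looser power of $(1-2s\theta)^{-1}$. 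A secondary technical point is a uniform control on the cumulant tail for $k\geq 6$ under $s\theta<c$, which is exactly the mechanism that fixes the ``$5$'' in the sum rather than leaving it arbitrary.
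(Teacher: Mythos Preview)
Your route via $\chi^{2}(p,q)=G(2s)/G(s)^{2}-1$ is genuinely different from the paper's. The paper bounds $\mathrm{KL}(q\Vert p)$ directly via Pinsker: it writes $\mathrm{KL}(q\Vert p)=\log\bigl(C(\theta,s)\sqrt{1-2s\theta}\bigr)+\int\bigl(s\theta y^{2}-s\log\cosh(\sqrt{2\theta}y)\bigr)q(y)\,dy$, bounds the integral by $s\theta^{2}(1-2s\theta)^{-2}$ using $x^{2}/2-\log\cosh(x)\leq x^{4}/12$, and then bounds $C(\theta,s)=\sum_{m}\theta^{m}P_{2m}(s)/m!$ from \emph{above} via the sharp Rademacher-moment inequalities (this is where those lemmas enter, and it is the only place). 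A cancellation between the $-s\theta^{2}(1-2s\theta)^{-2}$ contribution coming from the second coefficient of $P_{2m}(s)$ and the $+s\theta^{2}(1-2s\theta)^{-2}$ from the integral is precisely what produces the leading $(s\theta)^{1}$ term and the half-integer exponents $(1-2s\theta)^{i-1/2}$; these do not arise organically from a $\chi^{2}$ computation.

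Your approach is workable but carries three costs the paper avoids. First, you need a \emph{lower} bound on $G(s)=(1-2s\theta)^{1/2}C(\theta,s)$ as well as an upper bound on $G(2s)$; the paper only ever needs upper bounds. Second, the cumulant series $\log G(c)=\sum_{k}\kappa_{k}(h)c^{k}/k!$ is formal: $h(\xi)$ is unbounded below like $-\rho^{2}\xi^{4}/12$, and you give no argument that the series converges out to $c=2s$; you should instead work directly with your Hubbard--Stratonovich closed form $G(c)=(1+c\rho)^{-1/2}\mathbb{E}\exp\bigl(\rho T_{c}^{2}/[2(1+c\rho)]\bigr)$ and bound the ratio. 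Third, $\chi^{2}$ tensorizes multiplicatively, so your step $(1+\chi^{2})^{n}-1\leq 2n\chi^{2}$ fails when $n\theta^{2}(s\theta)^{2}$ is large, which the theorem's hypotheses do not preclude; route instead through $\mathrm{KL}(p\Vert q)\leq\chi^{2}(p,q)$ and then $\mathrm{KL}(p^{n}\Vert q^{n})=n\,\mathrm{KL}(p\Vert q)$. Finally, your leading term $\kappa_{2}(h)s^{2}\asymp\theta^{2}(s\theta)^{2}/(1-2s\theta)^{4}$ starts at $i=2$ with a different denominator power --- still sufficient for the stated theorem since $s\theta<c$ makes all $(1-2s\theta)$ factors constants and $(s\theta)^{2}\leq c\,(s\theta)$, but you should not expect to recover the paper's exact five-term expression.
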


The proof of Theorem \ref{thm:CWN:closeto:Gaussian} is given in Section \ref{section:complowerboundproof}. In order the prove this result, we develop novel bounds on the even moments of sums of i.i.d. Rademacher random variables. We are now ready to give the intuition of the proof of the main theorem of this section. Since both the normal clique and the Curie-Weiss model can be generated as conditional i.i.d., where the corresponding conditional variables have close (in total variation) distributions, and the functions $\Phi(x)$ and $\frac{\exp(\sqrt{2/\pi}x)}{\exp(\sqrt{2/\pi}x) + \exp(-\sqrt{2/\pi}x)}$ are close to each other it ought to follow that the total variation between the signs of the normal clique and the Curie-Weiss models are close. The proof of Theorem \ref{thm:Ising:computation}, which is presented in Section \ref{section:complowerboundproof}, makes this intuition precise by calculating a bound on the total variation.

\section{Proof of Theorem~\ref{thm:lowerbound}}\label{section:five}
In this section we give the proof of Theorem~\ref{thm:lowerbound}. Note that by the definition of $\cS^*$, we only need to consider the simple zero-field ferromagnetic Ising model where all non-zero entries in $\Theta$ are the same. Let $G=(\overline{V},E)$ be the underlying graph and $\theta = \theta_{ij},(i,j)\in E$ be the parameter. Let $t=\tanh(\theta)$ and $\EE_0$ denote the expectation under the probability measure that $X_1,\ldots,X_d$ are i.i.d. Rademacher variables. The following lemma gives an equivalent form of the probability mass function in simple zero-field ferromagnetic Ising models.
\begin{lemma}\label{lemma:equivalentprobabilitymassfunction}
	For a simple zero-field Ising model with underlying graph $G=(\overline{V},E)$ and parameter $\theta$, we have
\begin{align}\label{eq:simplifymassfunction}
\PP_\Theta(\bX) = \frac{\prod_{(i,j)\in  E} (1 + t X_iX_j) }{2^d \EE_{0} \big[ \prod_{(i,j)\in  E} (1 + t X_iX_j)\big]},
\end{align}
where $t=\tanh(\theta)$.
\end{lemma}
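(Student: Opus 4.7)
\medskip

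The plan is to exploit the fact that the spin products $X_i X_j$ take only the values $\pm 1$, which means the exponential can be linearized exactly. Concretely, I would first establish the pointwise identity
\[
\exp(\alpha X_i X_j) \;=\; \cosh(\alpha)\bigl(1 + \tanh(\alpha)\, X_i X_j\bigr),
\]
valid for any $\alpha \in \RR$ and any $X_i, X_j \in \{\pm 1\}$. This is immediate by checking the two cases $X_i X_j = 1$ and $X_i X_j = -1$ and using $e^{\pm \alpha} = \cosh(\alpha) \pm \sinh(\alpha)$.

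Next, I would substitute this identity into the exponential $\exp\bigl(\sum_{i,j} \theta_{ij} X_i X_j\bigr)$ in the definition of $\PP_\Theta(\bX)$. In the simple zero-field ferromagnetic regime under consideration, every nonzero $\theta_{ij}$ equals the common value $\theta$, so the exponential factors as a product over edges (indexed according to whatever ordered/unordered convention the paper adopts for $E$), each factor producing a $\cosh(\theta)$ prefactor and a term $(1 + t X_i X_j)$ with $t = \tanh(\theta)$. Pulling the $\cosh(\theta)$ prefactors outside yields
\[
\exp\Bigl(\sum_{i,j} \theta_{ij} X_i X_j\Bigr) \;=\; \cosh(\theta)^{|E|}\, \prod_{(i,j) \in E}(1 + t X_i X_j),
\]
up to the factor of $2$ bookkeeping inherent to the convention on $E$, which I will make consistent across numerator and denominator.

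To handle the denominator, I would rewrite the partition function $Z_\Theta = \sum_{\bX \in \{\pm 1\}^d} \exp(\cdots)$ as $2^d \EE_0[\exp(\cdots)]$, using that the uniform measure on $\{\pm 1\}^d$ puts mass $2^{-d}$ on each point and is precisely the law under which $X_1, \ldots, X_d$ are i.i.d.\ Rademacher. Applying the linearization identity again inside the expectation gives $Z_\Theta = \cosh(\theta)^{|E|} \cdot 2^d\, \EE_0\bigl[\prod_{(i,j) \in E}(1 + t X_i X_j)\bigr]$.

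Finally, forming the ratio $\PP_\Theta(\bX) = \exp(\cdots)/Z_\Theta$ cancels the common $\cosh(\theta)^{|E|}$ factor in numerator and denominator, yielding exactly \eqref{eq:simplifymassfunction}. There is no real obstacle here; the only point to watch is bookkeeping with the ordered-versus-unordered pair convention for $E$ inherited from the definition $E = \{(i,j) : \theta_{ij} \neq 0\}$, but since the same convention is used in both numerator and denominator the potential factor-of-two discrepancy cancels, and the identity holds as stated.
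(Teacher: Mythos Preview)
Your proposal is correct and follows essentially the same approach as the paper: both establish the pointwise identity $\exp(\theta X_i X_j) = \cosh(\theta)(1 + t X_i X_j)$ using the parity of $\cosh$ and $\tanh$ (or equivalently by checking the two cases $X_i X_j = \pm 1$), substitute into the definition of $\PP_\Theta(\bX)$, and let the common $\cosh(\theta)^{|E|}$ factor cancel between numerator and denominator. Your explicit handling of the $2^d$ normalization and the ordered/unordered edge convention is a nice touch that the paper leaves implicit.
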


We now apply Le Cam's method. Let $\PP_{\Theta,n}$ be the joint probability mass function of $n$ i.i.d. samples of Ising model with parameter $\Theta$, $\EE_{\Theta,n}$ denote the expectation under $\PP_{\Theta,n}$, and $\overline{\PP} = \frac{1}{|\cS^*|}\sum_{\Theta\in\cS^*} \PP_{\Theta,n}$ be the averaged probability measure among $\Theta\in \cS^*$ under the alternative. Then the result of Le Cam's method is given in the following lemma.
\begin{lemma}\label{lemma:lecam}
	For the risk $\gamma(\cS^*)$ defined in \eqref{eq:riskSstardef} we have
	$
	\gamma(\cS^*)\geq 1 - \frac{1}{2}\sqrt{D_{\chi^2}(\overline{\PP},\PP_{0 ,n})}
	$, 
	where $D_{\chi^2}(\overline{\PP},\PP_{0 ,n})$ is the $\chi^2$-divergence between $\overline{\PP}$ and $\PP_{0,n}$ defined as
	\begin{align}\label{eq:chisquaredef}
	D_{\chi^2}(\overline{\PP},\PP_{0 ,n}) := \frac{1}{|\cS^*|^2}\sum_{\Theta,\Theta'\in \cS^*} \EE_{0 ,n}\bigg[ \frac{\PP_{\Theta,n}}{\PP_{0 ,n}}\frac{\PP_{\Theta',n}}{\PP_{0 ,n}}  \bigg] -1.
	\end{align}
\end{lemma}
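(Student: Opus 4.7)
The plan is to execute a textbook Le Cam two-point (mixture) argument, followed by the standard inequality relating total variation to $\chi^2$-divergence.

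First, I would replace the maximum over $\Theta \in \cS^*$ inside the definition of $\gamma(\cS^*)$ with the uniform average, which only decreases it:
\$
\max_{\Theta\in\cS^*}\PP_{\Theta,n}(\psi = 0) \;\geq\; \frac{1}{|\cS^*|}\sum_{\Theta\in\cS^*}\PP_{\Theta,n}(\psi = 0) \;=\; \overline{\PP}(\psi = 0).
\$
Taking the infimum over tests then gives $\gamma(\cS^*) \geq \inf_\psi [\PP_{0,n}(\psi = 1) + \overline{\PP}(\psi = 0)]$, and the infimum on the right equals $1 - \operatorname{TV}(\PP_{0,n},\overline{\PP})$ via the standard identification of the Bayes risk with the total variation distance (choose $\psi$ to be the likelihood ratio test).

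Second, I would invoke the well-known bound $\operatorname{TV}(\PP_{0,n},\overline{\PP}) \leq \tfrac{1}{2}\sqrt{D_{\chi^2}(\overline{\PP},\PP_{0,n})}$, which follows by writing $2\operatorname{TV}(\PP_{0,n},\overline{\PP}) = \EE_{0,n}\bigl|\overline{\PP}/\PP_{0,n} - 1\bigr|$ and applying Cauchy--Schwarz so that this is at most $\sqrt{\EE_{0,n}(\overline{\PP}/\PP_{0,n} - 1)^2}$. The latter expectation is, by definition, $D_{\chi^2}(\overline{\PP},\PP_{0,n})$.

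Finally, I would verify that the $\chi^2$-divergence admits the expression in \eqref{eq:chisquaredef}. Expanding the square,
\$
D_{\chi^2}(\overline{\PP},\PP_{0,n}) \;=\; \EE_{0,n}\!\left[\Big(\tfrac{1}{|\cS^*|}\sum_{\Theta\in\cS^*} \tfrac{\PP_{\Theta,n}}{\PP_{0,n}}\Big)^{\!2}\right] - 1 \;=\; \frac{1}{|\cS^*|^2}\sum_{\Theta,\Theta'\in \cS^*} \EE_{0,n}\!\left[\tfrac{\PP_{\Theta,n}}{\PP_{0,n}}\tfrac{\PP_{\Theta',n}}{\PP_{0,n}}\right] - 1,
\$
which is exactly the stated formula. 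Combining the three steps yields the claimed inequality. There is no genuine obstacle here; the lemma is a packaging of classical ingredients so that later work in Section~\ref{section:five} can focus on controlling the inner pairwise expectation $\EE_{0,n}[(\PP_{\Theta,n}/\PP_{0,n})(\PP_{\Theta',n}/\PP_{0,n})]$ using the Ising-specific representation from Lemma~\ref{lemma:equivalentprobabilitymassfunction} and the incoherence/arboricity hypotheses.
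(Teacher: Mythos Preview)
Your proposal is correct and follows essentially the same route as the paper: lower-bound the max by the uniform mixture, identify the resulting Bayes risk as $1-\operatorname{TV}(\overline{\PP},\PP_{0,n})$, bound total variation by $\tfrac{1}{2}\sqrt{D_{\chi^2}}$ via Cauchy--Schwarz, and expand the square to obtain \eqref{eq:chisquaredef}. The only cosmetic difference is that the paper invokes Neyman--Pearson explicitly for the Bayes-risk step, whereas you phrase it as the ``standard identification''; the content is identical.
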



Lemma~\ref{lemma:lecam} is a direct result of the Le Cam's method. By Lemma~\ref{lemma:lecam}, $\liminf_{n\rightarrow \infty}\gamma(\cS^*) = 1$ is implied by $\limsup_{n\rightarrow \infty} D_{\chi^2}(\overline{\PP},\PP_{0,n}) = 0$. To prove this, we use a method similar to the high-temperature expansion of Ising model \citep{fisher1967critical,guttman1989asymptotic}. 
By Lemma~\ref{lemma:equivalentprobabilitymassfunction} and the fact that the $n$ samples are independent, for $\Theta,\Theta'\in \cS^*$ with corresponding graphs $G, G'$, we can rewrite the term $\EE_{0 ,n} \big[ \PP_{\Theta,n}  \PP_{\Theta',n} / \PP_{0 ,n}^2 \big]$ as follows:
\begin{align}\label{eq:chisquarecalculation}
\EE_{0 ,n}\bigg[ \frac{\PP_{\Theta,n}}{\PP_{0,n}}\frac{\PP_{\Theta',n}}{\PP_{0,n }} \bigg] = \frac{ \EE_{0 }^n \big[ \prod_{(i,j)\in  E(G)} (1 + t X_iX_j) \cdot \prod_{(i,j)\in  E(G')} (1 + t X_iX_j) \big] }{\EE_{0 }^n \big[ \prod_{(i,j)\in  E(G)} (1 + t X_iX_j)\big]\cdot \EE_{0 }^n \big[ \prod_{(i,j)\in  E(G')} (1 + t X_iX_j)\big]},
\end{align}
where $t = \tanh(\theta)$. 
Each expectation on the right-hand side above is a polynomial of $t$. For any $G,G'\in \cG^*$, we define 
\begin{align*}
f_{G}(t) & = \EE_{0 }  \Bigg[ \prod_{(i,j)\in  E(G)} (1 + t X_iX_j) \Bigg], \\ 
f_{ G, G'}(t) & = \EE_{0 }  \Bigg[ \prod_{(i,j)\in  E(G)} (1 + t X_iX_j) \prod_{(i,j)\in  E(G')} (1 + t X_iX_j) \Bigg].
\end{align*}
Plugging the definitions above into \eqref{eq:chisquarecalculation}, we obtain
\begin{align}\label{eq:chisquarepolyrepresentation}
\EE_{0 ,n}\bigg[ \frac{\PP_{\Theta,n}}{\PP_{0 ,n}}\frac{\PP_{\Theta',n}}{\PP_{0 ,n}} \bigg] = \Bigg[1 + \frac{f_{ G, G'}(t) - f_{ G}(t) f_{ G'}(t)}{f_{ G}(t)f_{ G'}(t)}\Bigg]^n.
\end{align}
We now analyze the coefficients of each polynomial in \eqref{eq:chisquarepolyrepresentation}. Let
\begin{align*}
f_{ G}(t) = \sum_{k=0}^\infty a_k t^k,~ f_{ G'}(t) = \sum_{k=0}^\infty b_k t^k,~ f_{G, G'}(t) = \sum_{k=0}^\infty c_k t^k.
\end{align*}
We also define
\begin{align*}
f_{G, G'}(t) - f_{G}(t)f_{G'}(t) = \sum_{k=0}^\infty \Bigg(c_k - \sum_{k_1+k_2 = k}a_{k_1}b_{k_2}\Bigg) t^k= \sum_{k=0}^\infty u_k t^k.
\end{align*}
For $f_{ G}(t)$, 
note that after expanding $\prod_{(i,j)\in  E(G)} (1 + t X_iX_j)$, the terms with non-zero expectations must have the form $ t^k X_{i_1}^2\cdots X_{i_k}^2$, where $i_1,\ldots, i_k \in \overline{V}$. Therefore by Lemma~\ref{lemma:Euler}, the coefficient of $t^k$ is equal to the number of $k$-edge subgraphs of $G$ where every vertex has an even degree. Similar arguments also applies to $f_{ G'}(t)$ and $f_{ G, G'}(t)$. This observation motivates us to introduce the definitions of multigraphs and Eulerian graphs. 

\begin{definition}[Multigraph]
	A multigraph is a graph which is permitted to have multiple edges connecting two vertices. We denote $G=(V,E)$, where $V$ is the vertex set, and $E$ is the edge multiset.
	
	For a multigraph $G$ with $d$ vertices, we define its adjacency matrix to be $\rmA=(\rmA_{ij})_{d\times d}$, where $\rmA_{ij} = \rmA_{ji}= $``the number of edges connecting vertices $i$ and $j$''. 
	A symmetric matrix $\rmA\in \RR^{d\times d}$ with nonnegative integer off-diagonal entries and zero diagonal entries naturally represents a multigraph with vertex set $\overline{V}$. 
	Given two multigraphs $G$ and $G'$ with adjacency matrices $\rmA$ and $\rmA'$, we define $G\oplus G'$ to be the multigraph defined by $\rmA + \rmA'$.
\end{definition}

\begin{definition}[Eulerian graph]
	An Eulerian circuit on a multigraph is a closed walk that uses each edge exactly once. 
	We say that a multigraph is Eulerian if every connected component has an Eulerian circuit.
\end{definition}

Note that in graph theory, the term 'Eulerian graph' has different meanings. Sometimes Eulerian graph is referred to as a graph that has an Eulerian circuit. This is different from our definition, because in this paper we do not require an Eulerian graph to be connected. 
The following famous lemma on Eulerian graph is first given by \cite{euler1741solutio} and then completely proved by \cite{hierholzer1873moglichkeit}.
\begin{lemma}\label{lemma:Euler}
	A graph is Eulerian if and only if all vertices in the graph have an even degree.
\end{lemma}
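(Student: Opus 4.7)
The plan is to prove the two directions of the biconditional separately, following the classical Euler--Hierholzer argument. For the \emph{necessity} direction, I would fix a connected component and suppose it admits an Eulerian circuit $W$. Each time $W$ passes through a vertex $v$ it uses exactly two edges incident to $v$ (one to enter, one to leave); for the starting/ending vertex the first edge out and the last edge in provide the matching pair. Since $W$ uses every edge of the component exactly once, the degree of $v$ equals twice the number of visits to $v$, hence is even. Vertices in singleton components have degree $0$, which is also even.

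For the \emph{sufficiency} direction, it suffices to handle one connected component at a time, so assume $G$ is connected and every vertex has even degree. I would proceed by induction on $|E(G)|$. The base case $|E(G)|=0$ is trivial (a single vertex, empty circuit). For the inductive step, pick any vertex $v_0$ with positive degree and greedily walk along unused edges, recording the traversal. The key observation is that whenever the walk arrives at a vertex $u \neq v_0$, it has used an odd number of edges at $u$; since the degree at $u$ is even, at least one unused incident edge remains, so the walk can continue. Hence the walk can only terminate at $v_0$, producing a closed trail $C$. Now remove the edges of $C$ from $G$; the parity at every vertex is preserved (each vertex on $C$ loses an even number of edges), so the remaining multigraph still has all even degrees. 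By the induction hypothesis applied to each of its connected components, each has an Eulerian circuit.

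Finally, I would splice these smaller Eulerian circuits into $C$ to obtain an Eulerian circuit of $G$. The splicing uses connectedness of $G$: any connected component $H$ of the reduced graph that contains an edge must share a vertex $w$ with $C$ (otherwise $H$ would be disconnected from $C$ in $G$, contradicting connectedness of $G$, since removing the edges of $C$ cannot disconnect a vertex from $C$ that was only reachable through $C$-edges\ldots here I need to be slightly careful). The careful statement is: if $H$ contains edges, then $V(H)$ must intersect $V(C)$, because the original graph $G$ is connected and every path in $G$ from $V(H)$ to some other vertex either stays in $H$ or crosses an edge of $C$, and such a crossing point lies in $V(C)\cap V(H)$. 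At such a shared vertex $w$, insert the Eulerian circuit of $H$ into $C$ by pausing $C$ at $w$, traversing $H$'s circuit, and then resuming $C$. The resulting closed walk uses every edge of $G$ exactly once, giving the required Eulerian circuit.

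The main obstacle is the last splicing step: verifying cleanly that every nontrivial residual component meets $V(C)$, which is where the connectedness assumption on $G$ enters in an essential way. Everything else is routine bookkeeping on degrees and edge counts.
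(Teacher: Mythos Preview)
Your argument is the classical Euler--Hierholzer proof and is correct, including for multigraphs as used in the paper; the splicing concern you flag is resolved exactly as you indicate, since in a connected $G$ any nontrivial residual component must meet $V(C)$. Note, however, that the paper does not supply its own proof of this lemma: it simply attributes the result to Euler and Hierholzer and uses it as a black box, so there is no in-paper argument to compare against.
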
 
Based on our previous discussion, Lemma~\ref{lemma:Euler} relates $a_k$, $b_k$ and $c_k$ to the number of $k$-edge Eulerian graphs. Define
\begin{align*}
\cE(k,G) := \big\{ \tilde{G} = (\overline{V},\tilde{E}): \tilde{E}\subseteq E,~|\tilde{E}| = k,~\tilde{G}\text{ is an Eulerian graph}\big\}.
\end{align*}
In words $\cE(k,G)$ is the set of $k$-edge Eulerian subgraphs of $G$. By Lemma~\ref{lemma:Euler} and our previous discussion, we have $a_k =|\cE(k,G)|$, $b_k =|\cE(k,G')|$ and $c_k =|\cE(k,G\oplus G')|$, and therefore
\begin{align}\label{eq:fE}
f_{ G}(t) = \sum_{k\geq 0} |\cE(k,G)| t^k,~ f_{ G'}(t) = \sum_{k\geq 0} |\cE(k,G')| t^k,\\
\text{ and } f_{ G, G'}(t) = \sum_{k\geq 0} |\cE(k,G\oplus G')| t^k.\nonumber
\end{align}
Figure~\ref{fig:EkG-example} gives an example of how to calculate $|\cE(k,G)|$ for a given multigraph $G$. 
\begin{figure}
	\begin{center}
		\begin{tabular}{cccc}
			\includegraphics[width=.18\textwidth,angle=0]{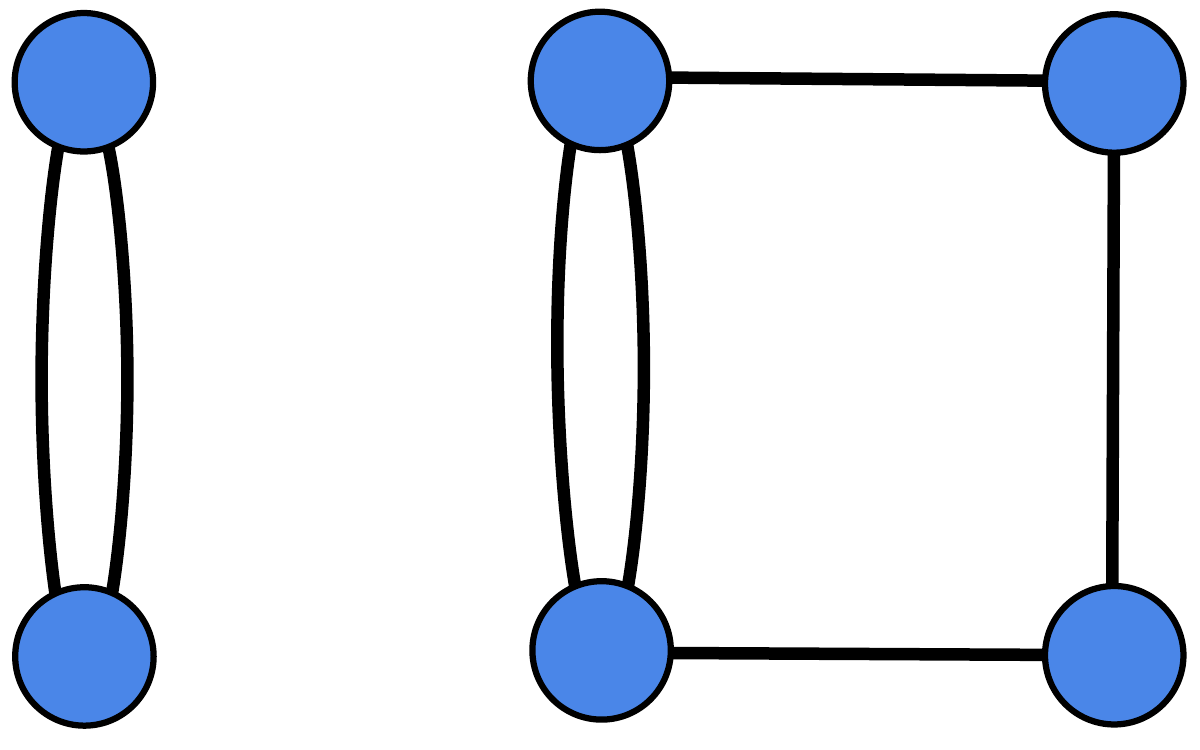}
			&
			\includegraphics[width=.18\textwidth,angle=0]{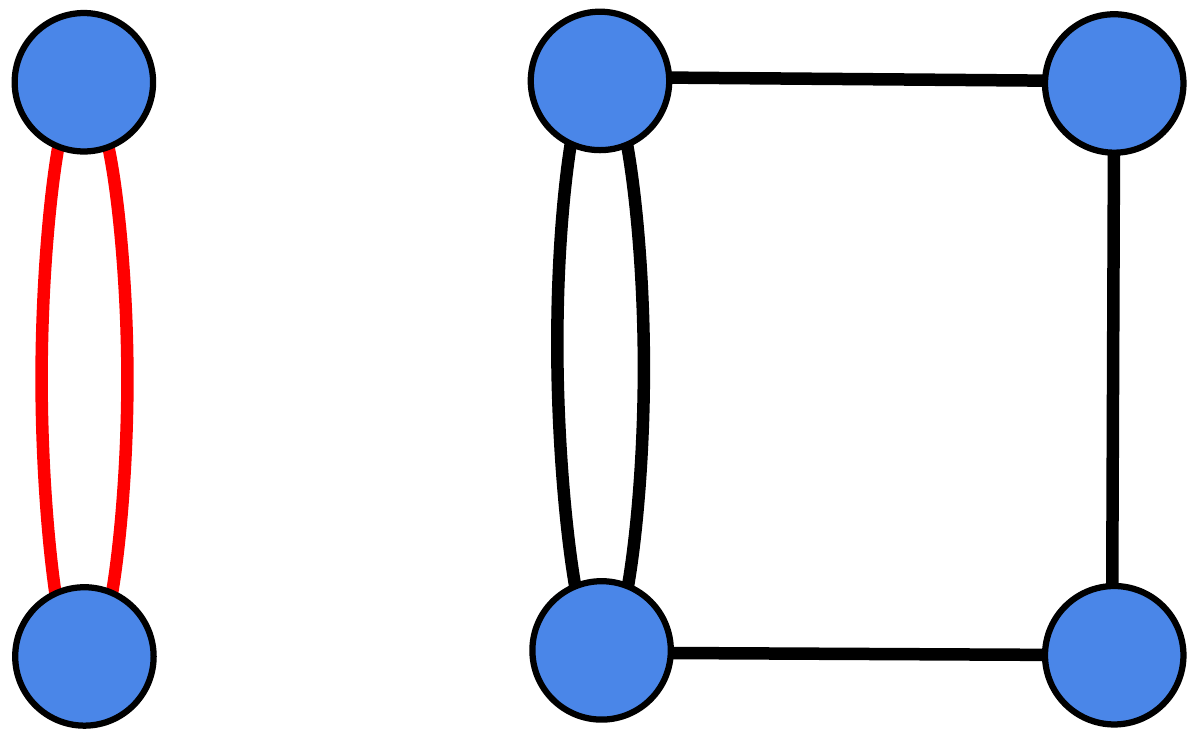}
			&
			\includegraphics[width=.18\textwidth,angle=0]{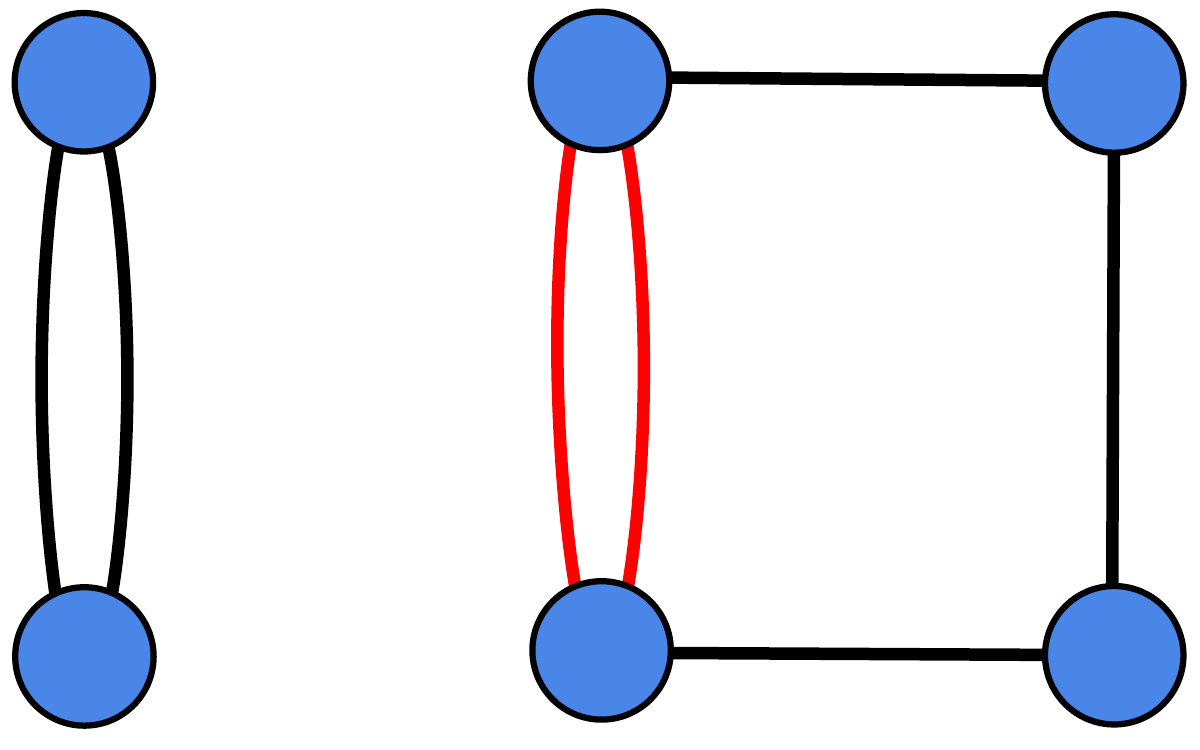}
			&
			\includegraphics[width=.18\textwidth,angle=0]{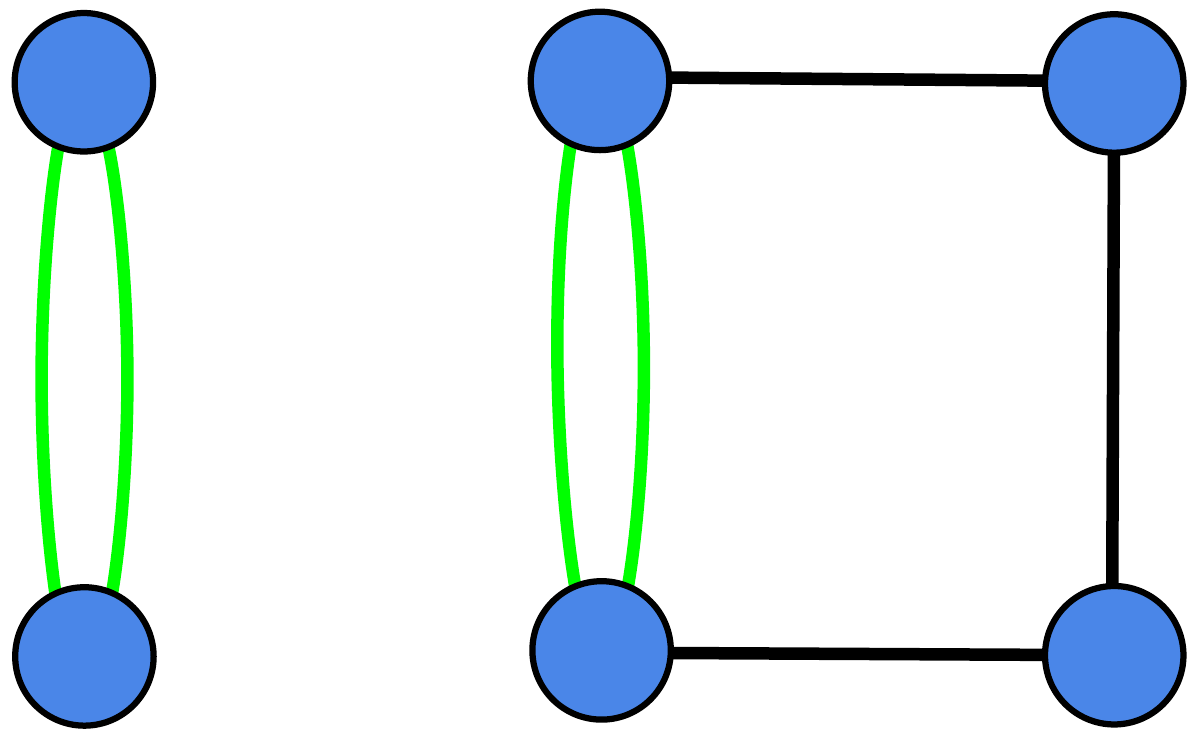}\\
			(a) & (b)  & (c) & (d) 
			\\
			\includegraphics[width=.18\textwidth,angle=0]{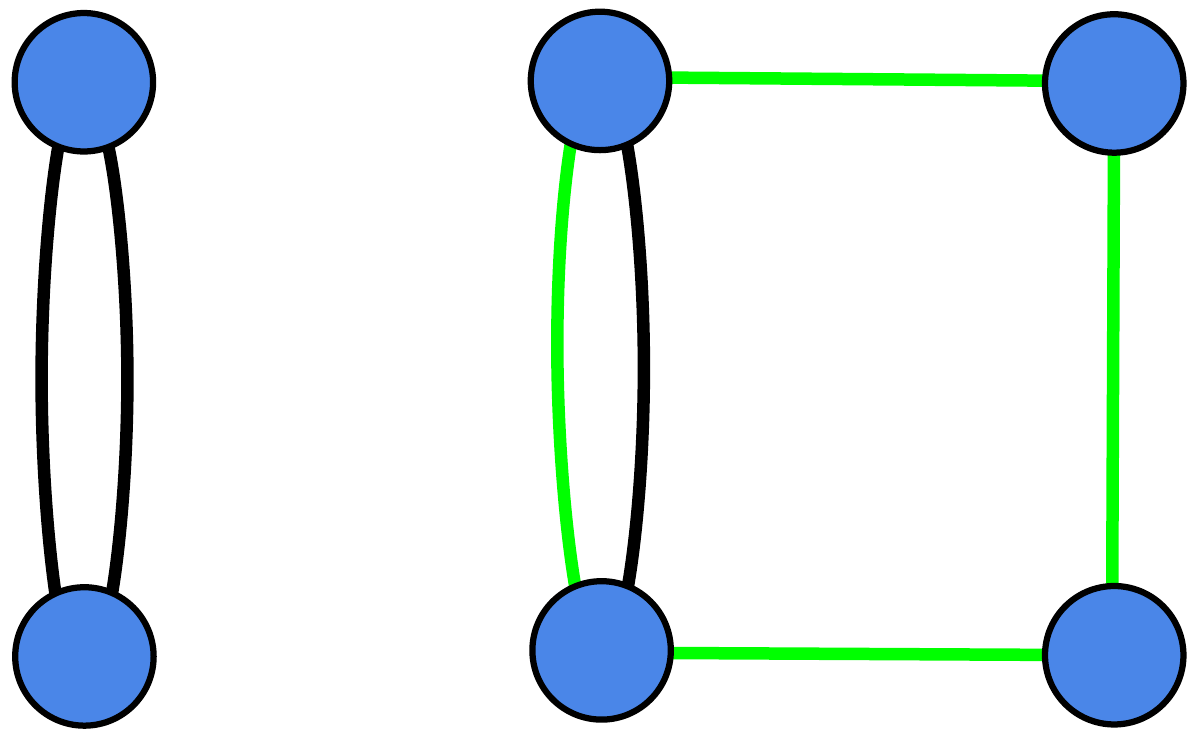}
			&
			\includegraphics[width=.18\textwidth,angle=0]{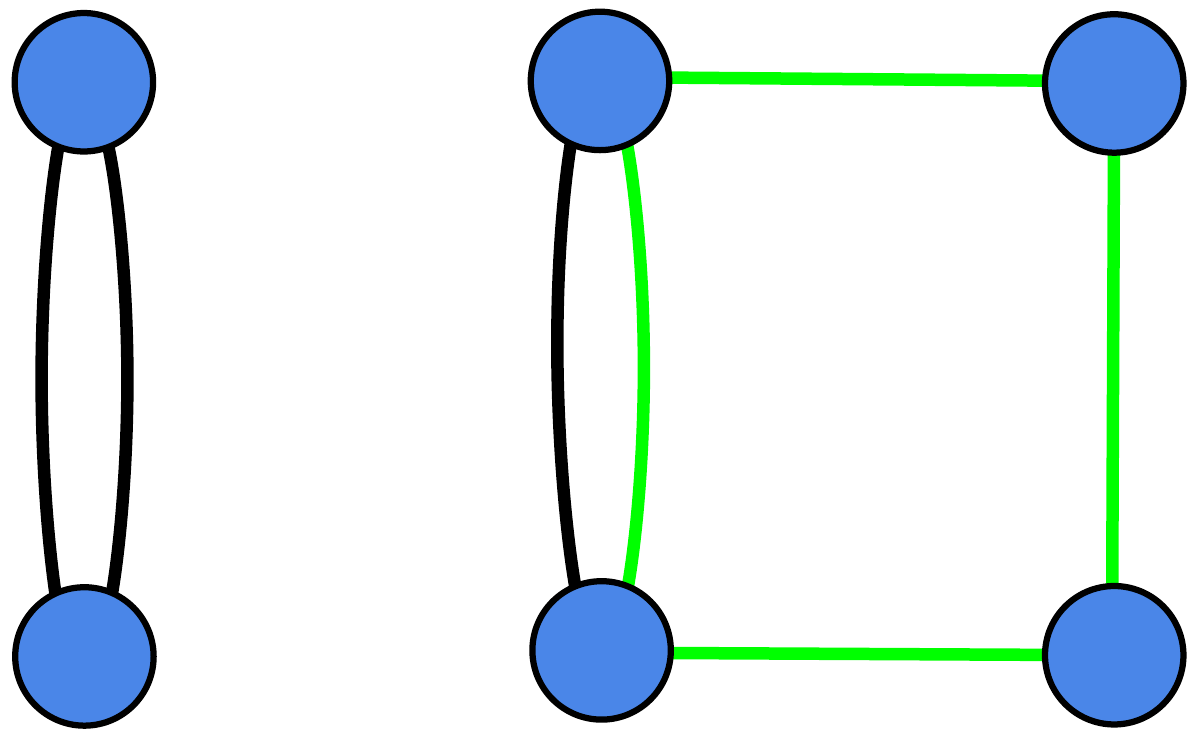}
			&
			\includegraphics[width=.18\textwidth,angle=0]{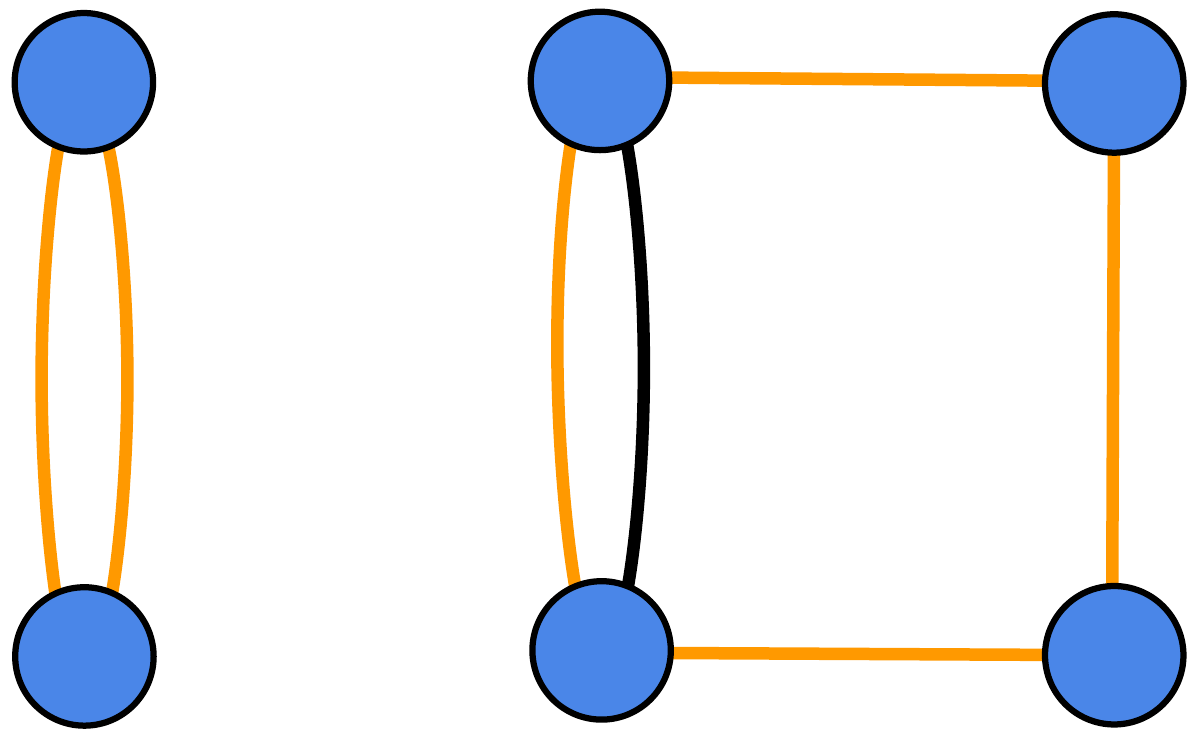}
			&
			\includegraphics[width=.18\textwidth,angle=0]{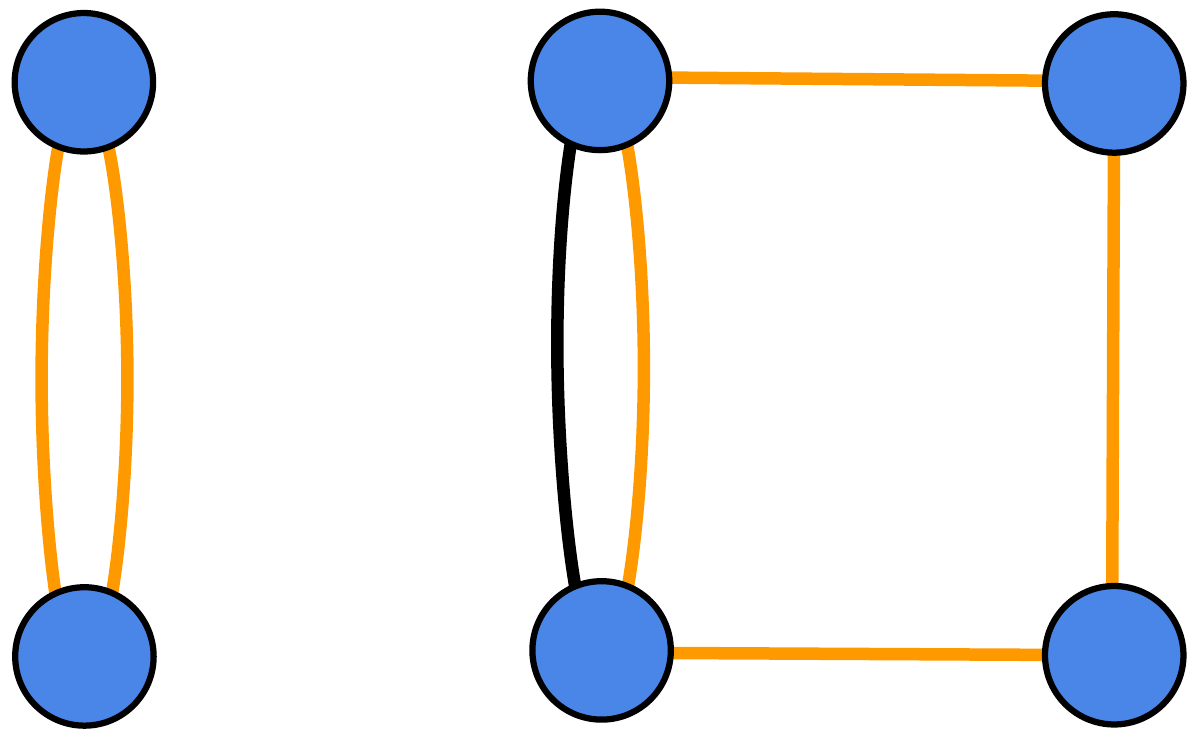}
			\\	
		    (e) & (f) & (g) & (h) 
		    \end{tabular}
	\end{center}
	\vskip-10pt
	\caption{An example of the calculation of $\{|\cE(k,G)|\}_{k\geq 1}$ for a multigraph $G$ is given in (a). We use red, green and orange edges to highlight $2$-edge, $4$-edge and $6$-edge Eulerian subgraphs of $G$ respectively. (b), (c) give the $2$-edge Eulerian subgraphs; (d), (e), (f) give the $4$-edge Eulerian subgraphs; and (g), (h) give the $6$-edge Eulerian subgraphs. We have $|\cE(2,G)| = 2$, $|\cE(4,G)| = 3$, $|\cE(6,G)| = 2$, and $|\cE(k,G)|=0$ for $k\neq 2,4,6$.}
	\label{fig:EkG-example}
	\vspace{-10pt}
\end{figure}
We now proceed to analyze $u_k$. Apparently, $u_0 = u_1 = 0$. For $k\geq 2$, by the definition of $u_k$ we can see that, if a $k$-edge Eulerian subgraph of $G\oplus G'$ can be split into two graphs $G_1$ and $G_2$ such that $G_1$ and $G_2$ are Eulerian subgraphs of $G$ and $G'$ respectively, then it is also counted in the sum $\sum_{k_1+k_2 = k}a_{k_1}b_{k_2}$ and therefore is not counted in $u_k$. Figure~\ref{fig:countednotcounted} gives examples of Eulerian subgraphs that are counted and not counted in $u_k$.
\begin{figure}
	\begin{center}
		\begin{tabular}{ccccccc}
			\includegraphics[width=.4\textwidth,angle=0]{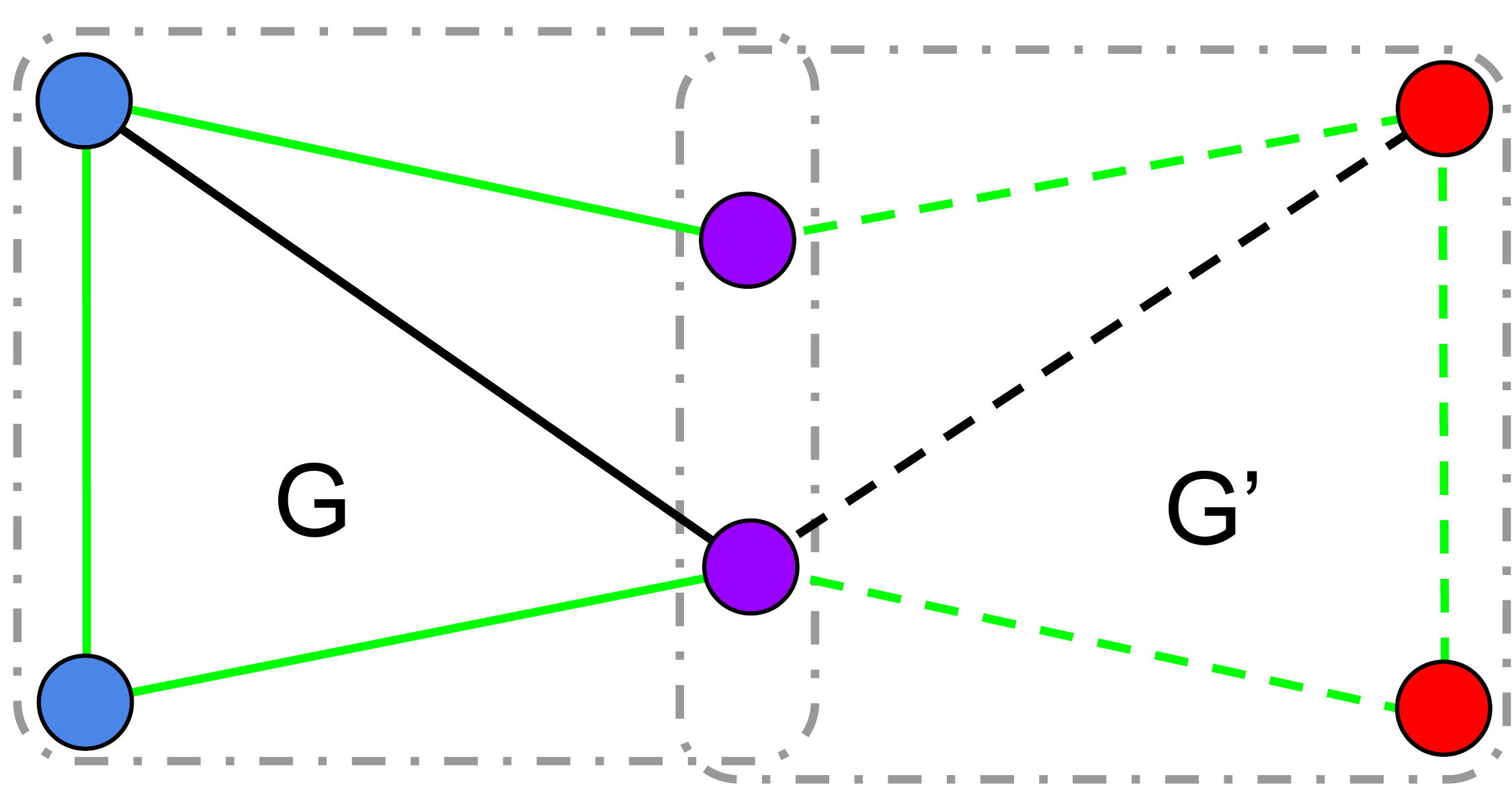}
			&   &
			\includegraphics[width=.4\textwidth,angle=0]{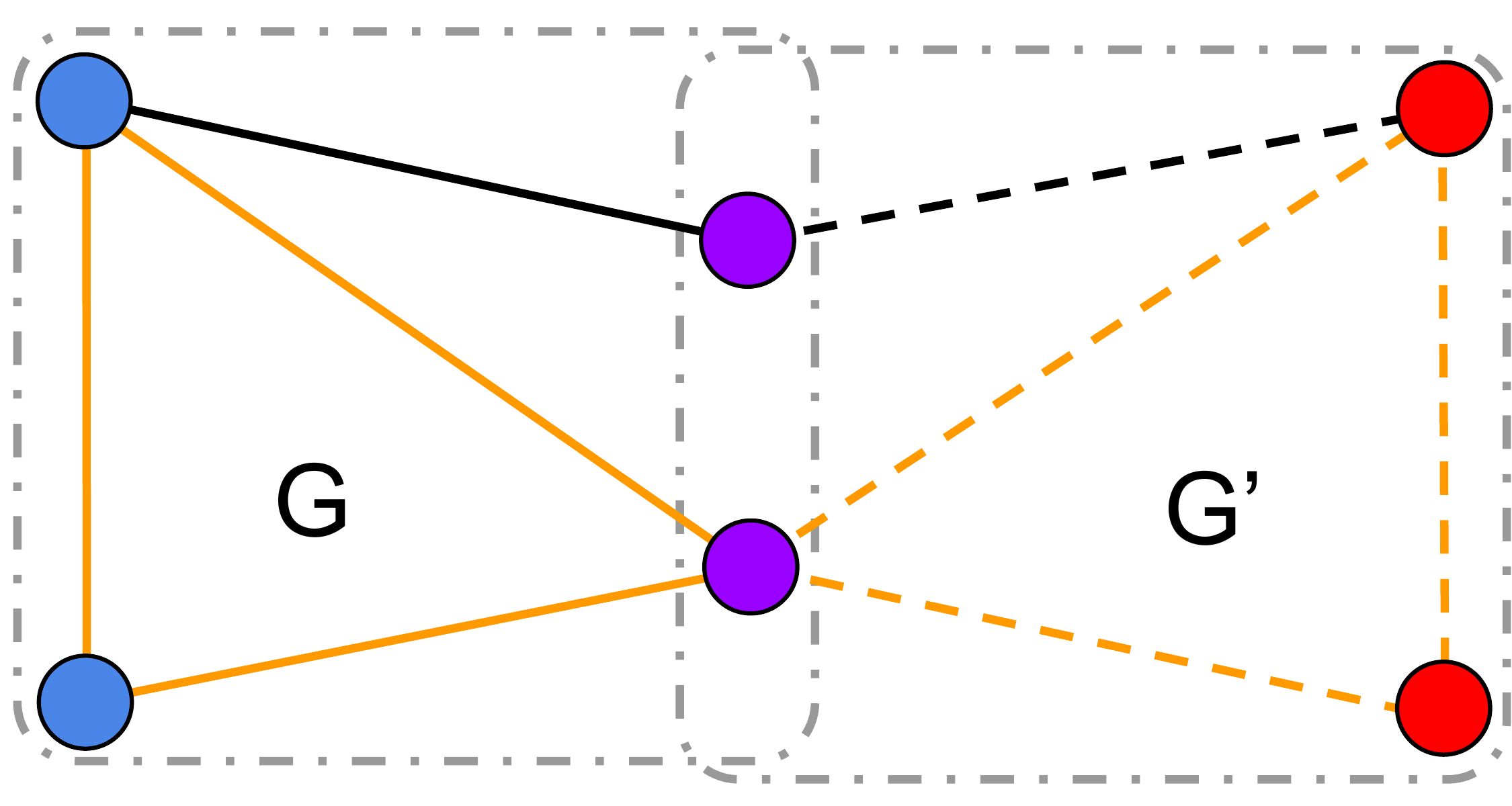}\\
			(a)  & & (b) 
		\end{tabular}
	\end{center}
	\vskip-10pt
	\caption{Illustration of graphs counted and not counted in $u_k$. The gray dot-dashed squares highlight the non-isolated vertices of $G$ and $G'$. The solid and dashed lines are edges in $G$ and $G'$ respectively.  We use purple vertices to represent the common non-isolated vertices of $G$ and $G'$. The blue vertices are non-isolated in $G$ but isolated in $G'$, and the red vertices are non-isolated in $G'$ but isolated in $G$. The green edges in (a) give an example of a $6$-edge Eulerian subgraph of $G\oplus G'$ counted in $u_6$, while the orange edges in (b) form a $6$-edge Eulerian subgraph of $G\oplus G'$ that is not counted in $u_6$.}
	\label{fig:countednotcounted}
	\vspace{-10pt}
\end{figure}
Using this type of argument, the following two lemmas together calculate and bound $u_k$ for $k\geq 2$.
\begin{lemma}\label{boundpolydiff1}
	We have
	\begin{align*}
	&u_2 = | E(G)\cap  E(G')|,~u_3 = \Delta_{G,G'}, \text{ and}\\
	&u_k\leq q_k[G\oplus G',V(G)\cap V(G')] \text{ for } k\geq 4,
	\end{align*}
	where $\Delta_{G,G'}$ denotes the number of triangles that contains at least one edge from $G$ and one edge from $G'$, and the function $q_k(\cdot,\cdot)$ is defined as follows:
	\begin{align*}
	\MoveEqLeft q_k(G,V) :=\big|\big\{\tilde{G}\in \cE(k,G): \exists i,j\in V,i,j\text{ are in one connected component of $\tilde{G}$} \big\}\big|.
	\end{align*}
\end{lemma}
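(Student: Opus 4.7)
The plan is to give $u_k$ a combinatorial interpretation and then analyze the Eulerian sub-multigraphs it counts. Expanding the product in $f_{G,G'}(t)$ with the two copies of any shared edge $(i,j) \in E(G) \cap E(G')$ treated as distinguishable (labeled ``from $G$'' or ``from $G'$''), the coefficient $c_k$ enumerates $k$-edge Eulerian sub-multigraphs $\tilde G$ of $G \oplus G'$ together with such a labeling of their edges. Every such $\tilde G$ decomposes canonically into a $G$-part $\tilde G_G$ and a $G'$-part $\tilde G_{G'}$, and conversely any pair $(\tilde G_1, \tilde G_2) \in \cE(k_1, G) \times \cE(k_2, G')$ with $k_1 + k_2 = k$ produces $\tilde G_1 \oplus \tilde G_2 \in \cE(k, G \oplus G')$. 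This yields a bijection between $\bigsqcup_{k_1+k_2=k} \cE(k_1,G) \times \cE(k_2,G')$ and the subset of $\cE(k, G \oplus G')$ whose labeled parts are \emph{both} Eulerian. Consequently $u_k = c_k - \sum_{k_1+k_2=k} a_{k_1} b_{k_2}$ counts precisely those $\tilde G \in \cE(k, G \oplus G')$ for which $\tilde G_G$ (equivalently $\tilde G_{G'}$) fails to be Eulerian.

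For the general bound when $k \geq 4$, I fix such a $\tilde G$ and pick a vertex $v_1$ of odd degree in $\tilde G_G$. Because the total degree of $v_1$ in $\tilde G$ is even, $v_1$ also has odd degree in $\tilde G_{G'}$; in particular $v_1$ is non-isolated in both $G$ and $G'$, so $v_1 \in V(G) \cap V(G')$. Let $C$ be the connected component of $\tilde G$ containing $v_1$. The sum of $\tilde G_G$-degrees over $C$ equals twice the number of $G$-labeled edges of $\tilde G$ contained in $C$, hence is even, and therefore $C$ contains a second vertex $v_2 \neq v_1$ of odd $\tilde G_G$-degree; the same reasoning gives $v_2 \in V(G) \cap V(G')$. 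Thus $\tilde G$ has two distinct vertices from $V(G) \cap V(G')$ in one connected component, placing it in the set counted by $q_k[G \oplus G', V(G) \cap V(G')]$ and establishing the inequality.

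For the exact values at $k = 2$ and $k = 3$, I enumerate directly. A $2$-edge Eulerian sub-multigraph has all even degrees, forcing a double edge at some pair $(i,j)$; since $G$ and $G'$ are simple, the only way $G \oplus G'$ has multiplicity $\geq 2$ at $(i,j)$ is $(i,j) \in E(G) \cap E(G')$, and in this case both $\tilde G_G$ and $\tilde G_{G'}$ are single edges, hence non-Eulerian. Every such $\tilde G$ is therefore counted, giving $u_2 = |E(G) \cap E(G')|$. Any $3$-edge Eulerian sub-multigraph must be a simple triangle (the only degree sequence on three edges with all positive even degrees), and its canonical decomposition is Eulerian on both sides exactly when its three edges share a single label. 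The remaining triangles, those containing at least one edge from $G$ and at least one from $G'$, are exactly what $\Delta_{G,G'}$ counts, yielding $u_3 = \Delta_{G,G'}$.

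I expect the main conceptual step to be the componentwise parity argument in the general case, since the relevant component $C$ must be taken in $\tilde G$ rather than in $\tilde G_G$, and one must verify that the parity constraint on the $G$-labeled edges within $C$ suffices to produce a second vertex of $V(G) \cap V(G')$ inside $C$. The remaining ingredients—the distinguishable-edge bijection and the small-$k$ degree-sequence classification—are largely bookkeeping.
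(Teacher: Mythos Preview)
Your proof is correct. The combinatorial interpretation of $u_k$ as counting the $k$-edge Eulerian sub-multigraphs of $G\oplus G'$ whose $G$-part (equivalently $G'$-part) fails to be Eulerian matches the paper's, and your handling of $k=2,3$ is essentially the same as the paper's.

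The one genuine difference is the $k\geq 4$ step. The paper argues the contrapositive: it takes a connected component of $\tilde G$ containing at most one vertex of $V(G)\cap V(G')$, walks the Eulerian circuit of that component starting from that vertex, and observes that the walk can switch between $G$-edges and $G'$-edges only at that single shared vertex, so the circuit breaks into closed walks lying entirely in $G$ or entirely in $G'$; concatenating these yields the decomposition $G_1\oplus G_2$. Your route instead picks a vertex $v_1$ of odd $\tilde G_G$-degree, notes that odd $G$-degree forces odd $G'$-degree (hence $v_1\in V(G)\cap V(G')$), and then applies the handshake lemma to the $G$-labeled edges inside the component of $\tilde G$ containing $v_1$ to locate a second such vertex. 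Your argument is shorter and avoids invoking the Eulerian-circuit structure altogether; the paper's is constructive in that it actually exhibits the Eulerian decomposition $G_1\oplus G_2$. Both land on the same containment, so either suffices for the lemma.
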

\begin{lemma}\label{boundpolydiff2}
	We have
	\begin{align*}
	|\cE(k,G)| \leq 2^k\|\rmA_G\|_F^k, ~\Delta_{G,G'} \leq 2 |V(G)\cap V(G')|\cdot \cR \cdot \Gamma.
	\end{align*}
	Moreover, for any multigraph $G$ and vertex set $V\subseteq \overline{V}$, we have
	\begin{align*}
	q_k(G,V)\leq \big(2^k \cdot |V|\cdot \|\rmA_{G}\|_F^k\big) \land \big[ k\cdot 2^{k-2}\cdot |V|^2\cdot(\|\rmA_{G}\|_1 \lor \|\rmA_{G}\|_F)^{k-2}\big].
	\end{align*}
\end{lemma}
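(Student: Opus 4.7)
All three inequalities are counting arguments that combine the Eulerian decomposition of subgraphs with classical norm inequalities for powers of the adjacency matrix. For the bound $|\cE(k,G)|\le 2^k\|\rmA_G\|_F^k$, I would decompose each $\tilde G\in\cE(k,G)$ into its connected components; each component, being Eulerian with at least one edge, has minimum degree $\ge 2$ and hence at least two edges. Associating to $\tilde G$ an ordered tuple of closed walks in $G$ obtained by running an Eulerian circuit inside each component yields an injection into tuples of closed walks, so
\[
|\cE(k,G)|\le\sum_{c\ge 1}\sum_{\substack{k_1+\cdots+k_c=k\\k_i\ge 2}}\prod_{i=1}^c\tr(\rmA_G^{k_i}).
\]
The number of compositions of $k$ into parts $\ge 2$ is at most $2^k$, and for $m\ge 2$ the inequality $\tr(\rmA_G^m)\le\|\rmA_G\|_F^m$ follows from $\sum_i|\lambda_i|^m\le(\sum_i\lambda_i^2)^{m/2}$, which is the $\ell^p$-to-$\ell^2$ monotonicity for $p=m\ge 2$.

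For $\Delta_{G,G'}$ I would use a \emph{hinge} argument. A triangle counted by $\Delta_{G,G'}$ contains at least one edge of each colour, and the two endpoints of the minority-coloured edge are both incident to a $G$-edge and a $G'$-edge of the triangle; consequently every such mixed triangle has at least two hinge vertices, all lying in $V(G)\cap V(G')$. This yields $2\Delta_{G,G'}\le\sum_{v\in V(G)\cap V(G')}h(v)$, where $h(v)$ counts mixed triangles hinged at $v$. A triangle hinged at $v$ is determined by a pair $u\in N_G(v)$, $w\in N_{G'}(v)$ with $(u,w)\in E(G)\cup E(G')$, and the number of such pairs is bounded by the number of edges of $H\in\{G,G'\}$ in the subgraph induced on $U=\{v\}\cup N_G(v)\cup N_{G'}(v)$. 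Since $|U|\le 1+2\Gamma$, the arboricity inequality $|E(H_U)|\le\cR(|U|-1)\le 2\cR\Gamma$ then gives $h(v)\le 4\cR\Gamma$ and the claim.

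For $q_k(G,V)$, the first bound is immediate from $q_k(G,V)\le|\cE(k,G)|$ together with the first claim and $|V|\ge 1$. For the second, for each $\tilde G\in q_k(G,V)$ I would pick a pair $i\neq j\in V$ lying in a common component $C$ of $\tilde G$, split the Eulerian circuit of $C$ starting at $i$ into an $i\to j$ walk of length $\ell$ and a $j\to i$ walk of length $k_C-\ell$, and pair this with an independent Eulerian subgraph of $G$ on the remaining $k-k_C$ edges. Controlling each walk count by the inductive inequality $(\rmA_G^m)_{ij}\le\|\rmA_G\|_1\|\rmA_G\|_F^{m-1}$ (obtained by isolating one end-step as a column-sum contribution and bounding the remaining transitions via Frobenius submultiplicativity) produces two $\|\rmA_G\|_1$ factors at the boundary edges at $i$ and $j$ and leaves $\|\rmA_G\|_F^{k_C-2}$ for the interior; combining with the bound on $|\cE(k-k_C,G)|$ from the first part and summing over $(i,j)\in V^2$, $\ell$ and $k_C$ yields the stated inequality after absorbing the two boundary factors into the $\|\rmA_G\|_1\vee\|\rmA_G\|_F$ prefactor. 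The main obstacle is precisely this bookkeeping step: a Frobenius-only bound for the walk counts overshoots the claimed exponent $k-2$ by two norm factors, so one must carefully separate the boundary edges of the two walks from their interiors and bound the boundary steps by column sums.
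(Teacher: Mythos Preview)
Your argument for $|\cE(k,G)|$ is correct and essentially the paper's: both decompose into connected Eulerian pieces, bound each by $\tr(\rmA^{k_i})\le\|\rmA\|_F^{k_i}$, and combine; the paper does the combining by induction on $k$ whereas you count compositions directly, but the content is the same.

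Your hinge argument for $\Delta_{G,G'}$ is a genuinely different route. The paper instead isolates the minority-coloured edge (say the single $G$-edge in a $1$-$2$ triangle), observes that both its endpoints lie in $V(G)\cap V(G')$, and hence that edge lies in $E[G_{V(G)\cap V(G')}]$; arboricity is then applied to the \emph{intersection set} $V(G)\cap V(G')$ rather than to a neighbourhood $U$ of a single vertex. This gives $|E[G_{V(G)\cap V(G')}]|\le\cR\,|V(G)\cap V(G')|$ and, after multiplying by $\|\rmA_{G'}\|_1$ for the apex choice and symmetrising, exactly $2|V(G)\cap V(G')|\cR\Gamma$. Your version is morally fine but, once you track the ordered/unordered pair $(u,w)$ and the two possible colours of the edge $(u,w)$, you pick up an extra small constant and do not quite hit the stated factor $2$.

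There is, however, a real gap in your $q_k$ bound. Your inequality $(\rmA^m)_{ij}\le\|\rmA\|_1\|\rmA\|_F^{m-1}$ is correct, but applying it to each of the two sub-walks $i\to j$ and $j\to i$ gives $\|\rmA\|_1^2\|\rmA\|_F^{k_C-2}$ for the distinguished component, hence $\|\rmA\|_1^2\|\rmA\|_F^{k-2}$ after multiplying by the bound on $|\cE(k-k_C,G)|$. That is $k$ norm factors in total, not $k-2$; ``absorbing the two boundary factors'' into $(\|\rmA\|_1\vee\|\rmA\|_F)$ only shows the bound with exponent $k$, which does not imply the claimed exponent $k-2$. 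The fix is simpler than what you attempt: for a walk with \emph{both} endpoints prescribed, choose the $m-1$ intermediate vertices sequentially as neighbours, giving the purely degree-based bound $(\rmA^m)_{ij}\le\|\rmA\|_1^{m-1}$. Equivalently (and this is what the paper does), a closed walk of length $k_C$ from $i$ that passes through $j$ at one of $k_C-1$ positions leaves only $k_C-2$ free intermediate vertices, each with at most $\|\rmA\|_1$ choices, so $p_{k_C}(G,V)\le(k_C-1)|V|^2\|\rmA\|_1^{k_C-2}$. Combining this with $|\cE(k-k_C,G)|\le2^{k-k_C}\|\rmA\|_F^{k-k_C}$ and summing over $k_C$ yields the stated $k\cdot 2^{k-2}|V|^2(\|\rmA\|_1\vee\|\rmA\|_F)^{k-2}$.
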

The upper bound of $|\cE(k,G)|$ in Lemma~\ref{boundpolydiff2} and the assumption that $\theta\leq [8(\Lambda\lor \Gamma)]^{-1}$ together show that $f_G(t)$, $f_{G'}(t)$ and $f_{G,G'}(t)$, as power series, all converge. 
Moreover, by the definition of $\cB$, the upper bound for $q_k(\cdot,\cdot)$ in Lemma~\ref{boundpolydiff2} and the assumption that $\theta\leq [8(\Lambda\lor \Gamma)]^{-1}$, we have
\begin{align*}
\sum _{k\geq 4} q_k[G\oplus G',V(G)\cap V(G')] \theta^k \leq |V(G)\cap V(G')| \cdot \cB\theta ^4.
\end{align*}
By Lemmas~\ref{boundpolydiff1} and \ref{boundpolydiff2} and the fact that $t = \tanh(\theta)\leq \theta$, we have
\begin{align}
f_{ G, G'}(t) - f_{ G}(t) f_{ G'}(t) &\leq | E(G)\cap  E(G)|\theta^2 + \Delta_{G,G'} \theta^3 + |V(G)\cap V(G')|  \cB\theta ^4\nonumber \\
&\leq  |V(G)\cap V(G')| \cdot ( \cR + 2\cR\cdot \Gamma\theta + \cB \theta^2) \cdot \theta^2.\label{eq:coefficientbound}
\end{align}
Note that $f_{ G}(t)f_{ G'}(t) \geq 1$ for $t \geq 0$ since all coefficients $a_k$ and $b_k$ are non-negative. By
\eqref{eq:chisquarepolyrepresentation}, \eqref{eq:coefficientbound} and the assumption that $\theta \leq  [8(\Lambda\lor \Gamma)]^{-1} \leq (2\Gamma)^{-1}$ and $\theta \leq \sqrt{\cR/\cB}$, we have
\begin{align*}
\EE_{0 }\bigg[ \frac{\PP_{\Theta}}{\PP_{0 }}\frac{\PP_{\Theta'}}{\PP_{0 }} \bigg] \leq 1+ 3 |V(G)\cap V(G')| \cR \theta^2.
\end{align*}
Plugging the inequality above into the definition of $\chi^2$-divergence \eqref{eq:chisquaredef} gives
\begin{align}\label{eq:chisquarebound}
D_{\chi^2}(\overline{\PP},\PP_{0 ,n}) \leq  \frac{1}{|\cS^*|^2}\sum_{\Theta,\Theta'\in \cS^*} \big[ 1 + 3|V(G)\cap V(G')| \cdot \cR\theta^2 \big]^n - 1 .
\end{align}
To complete the proof, we invoke the incoherence condition of $\cG^*$. We summarize the result as the following lemma. 
\begin{lemma}\label{lemma:negativeassociation}
	If $\cG^*$ is incoherent, then the following inequality holds.
\begin{align*}
\frac{1}{|\cS^*|^2}\sum_{\Theta,\Theta'\in \cS^*} \exp[ 3n\cR|V(G)\cap V(G')| \theta^2 \big] \leq \exp [ N(\cG^*)\cdot \exp ( 3n\cR \theta^2 )  ].
\end{align*}
\end{lemma}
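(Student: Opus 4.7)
The plan is to recognize the left-hand side as a joint expectation $\EE_{G,G'\sim U(\cG^*)}\exp[c\,|V(G)\cap V(G')|]$, with $c := 3n\cR\theta^2$, and then to decouple the exponential of the intersection cardinality using the incoherence hypothesis. This will allow a clean factorization of the conditional moment generating function of $|V(G)\cap V(G')|$ given $G$, after which the outer expectation over $G$ becomes a trivial maximization controlled by $N(\cG^*)$.

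The first concrete step I would carry out is to decompose $|V(G)\cap V(G')| = \sum_{i\in V(G)}\mathds{1}[i\in V(G')]$. Conditioning on $G$, the inner expectation becomes the joint moment generating function of the binary variables $Y_i := \mathds{1}[i\in V(G')]$ for $i\in V(G)$ under the uniform law on $\cG^*$. By the incoherence assumption this family is negatively associated. Using the standard consequence of negative association applied to the coordinate-wise non-decreasing, non-negative functions $y\mapsto e^{cy}$ on disjoint singleton blocks, I obtain
\$
\EE_{G'}\prod_{i\in V(G)} e^{cY_i} \;\leq\; \prod_{i\in V(G)}\EE_{G'} e^{cY_i}.
\$
Each Bernoulli factor is $1 + p_i(e^c-1)\leq \exp[p_i(e^c-1)]$ with $p_i := \PP_{G'}[i\in V(G')]$, and summing exponents while using $\sum_{i\in V(G)} p_i = \EE_{G'}|V(G)\cap V(G')|$ yields the conditional bound
\$
\EE_{G'}\exp\bigl[c\,|V(G)\cap V(G')|\bigr] \;\leq\; \exp\bigl[(e^c-1)\,\EE_{G'}|V(G)\cap V(G')|\bigr].
\$

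The second step is to take the outer expectation over $G$. Because any average is bounded by the maximum of its integrand, the defining inequality $\EE_{G'}|V(G)\cap V(G')|\leq N(\cG^*)$, which holds for every $G\in\cG^*$, gives $\EE_G \exp[(e^c-1)\EE_{G'}|V(G)\cap V(G')|] \leq \exp[(e^c-1)N(\cG^*)]$. Finally the crude bound $e^c-1\leq e^c$ followed by resubstitution of $c = 3n\cR\theta^2$ delivers the advertised inequality $\exp[N(\cG^*)\exp(3n\cR\theta^2)]$.

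The only step that deserves some care is the passage from pairwise negative association, phrased in the paper's definition via covariances of monotone functions on disjoint index subsets, to the multiplicative inequality for the joint exponential moment. This is the textbook consequence of negative association obtained by iteratively applying the pairwise covariance bound to the non-negative non-decreasing observables $e^{cY_i}$; I do not anticipate a substantive obstacle beyond citing or briefly re-deriving this folklore fact. No estimate beyond this factorization and the single-coordinate bound $1+x\leq e^x$ is needed.
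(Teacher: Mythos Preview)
Your proposal is correct and follows essentially the same route as the paper's proof: decompose $|V(G)\cap V(G')|$ as a sum of indicators over $V(G)$, use negative association to factorize the conditional moment generating function, bound each Bernoulli factor via $1+x\le e^x$, sum the exponents to recover $\EE_{G'}|V(G)\cap V(G')|$, then take the maximum over $G$ and use $e^c-1\le e^c$. The only cosmetic difference is that the paper passes to $\max_{\Theta}$ before applying negative association, whereas you condition first and maximize last; both orderings give the same bound.
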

Now by \eqref{eq:chisquarebound}, Lemma~\ref{lemma:lecam} and Lemma~\ref{lemma:negativeassociation}, if $\theta \leq \sqrt{\frac{\log [N^{-1}(\cG^*)]^{-1}\}}{6nR}} $ and $N(\cG^*) = o(1)$, we have 
\begin{align*}
\liminf_{n\rightarrow \infty} \gamma(\cS^*) = 1.
\end{align*}

\section{Proof of Theorem~\ref{thm:upperbound}}\label{section:six}
	In this section we give the proof of Theorem~\ref{thm:upperbound}. The key part of our proof is to derive concentration inequalities for $W_H$. Following the definition in \cite{vershynin2010introduction}, we define the $\psi_1$-norm of the random variable $Z$ as follows.
	\begin{align*}
	    \| Z \|_{\psi_1} := \sup_{p\geq 1} p^{-1} (\EE |Z|^p)^{1/p}. 
	\end{align*}
	If a random variable $Z$ has finite $\psi_1$-norm, we say $Z$ is a sub-exponential random variable. 
	The following lemma gives bounds for the $\psi_1$-norm of $W_H$.
	\begin{lemma}\label{lemma:psi1normbound}
		Let $\bX \in \{\pm 1\}^d$ be a random vector generated from the high temperature ferromagnetic Ising model with parameter matrix $\Theta$. For any graph $H$, define 
		\begin{align*}
		W_{H} := \frac{1}{|E(H)|} \sum_{(i,j)\in E(H)} X_iX_j.
		\end{align*}
		If $\|\Theta\|_F \leq 1/2$, then we have $\|W_H\|_{\psi_1} \leq C|E(H)|^{-1/2}$, where $C>0$ is an absolute constant.
		\end{lemma}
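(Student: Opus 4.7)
The plan is to reduce the bound under $\PP_\Theta$ to an analogous bound under the i.i.d.\ Rademacher measure $\PP_0$, combining a change-of-measure argument with the Hanson--Wright inequality for Rademacher chaos; the high-temperature assumption $\|\Theta\|_F\leq 1/2$ will be used to control the Radon--Nikodym derivative. Under $\PP_0$ the quantity $|E(H)|W_H=\sum_{(i,j)\in E(H)}X_iX_j$ is a mean-zero second-order Rademacher chaos of the form $\bX^T M_H\bX$ with $\|M_H\|_F=\sqrt{|E(H)|/2}$, so the Hanson--Wright inequality for Rademacher variables yields the baseline estimate $\|W_H\|_{\psi_1,0}\leq C|E(H)|^{-1/2}$ with the desired scaling.

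To transfer the bound to $\PP_\Theta$, Lemma~\ref{lemma:equivalentprobabilitymassfunction} provides the Radon--Nikodym derivative $L(\bX)=2^d\PP_\Theta(\bX)=\prod_{(i,j)\in E}(1+t_{ij}X_iX_j)/f_G(t)$, with $t_{ij}=\tanh(\theta_{ij})$ and $f_G(t)\geq 1$. Cauchy--Schwarz gives
\begin{align*}
(\EE_\Theta|W_H|^p)^{1/p} \leq (\EE_0|W_H|^{2p})^{1/(2p)} \cdot (\EE_0[L^2])^{1/(2p)},
\end{align*}
and since the first factor is $O(p/\sqrt{|E(H)|})$ by the baseline bound, it suffices to show $\EE_0[L^2]\leq M$ for an absolute constant $M$; dividing by $p$ and taking the supremum will then deliver $\|W_H\|_{\psi_1,\Theta}\leq C'|E(H)|^{-1/2}$. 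To bound $\EE_0[L^2]$, using $f_G(t)\geq 1$ and the identity $(1+tX_iX_j)^2=(1+t^2)+2tX_iX_j$, I would factor
\begin{align*}
\EE_0[L^2]\leq \prod_{(i,j)\in E}(1+t_{ij}^2)\cdot \EE_0\Big[\prod_{(i,j)\in E}(1+t'_{ij}X_iX_j)\Big],
\end{align*}
with $t'_{ij}=\tanh(2\theta_{ij})$. The prefactor is $\leq \exp(\|\Theta\|_F^2)\leq e^{1/4}$, and the remaining factor is upper bounded via $1+x\leq e^x$ by the MGF of a Rademacher quadratic chaos $\bX^T T'\bX$ with $\|T'\|_F\leq 2\|\Theta\|_F\leq 1$, which the Hanson--Wright MGF estimate bounds by a universal constant.

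The main obstacle is the final bound on $\EE_0[L^2]$, which is precisely the one-sample $\chi^2$-divergence between $\PP_\Theta$ and $\PP_0$; the high-temperature condition $\|\Theta\|_F\leq 1/2$ is tuned exactly so that the doubled-coupling expansion converges and the associated Rademacher-chaos MGF is finite. If the universal constants in Hanson--Wright are not quite sharp enough to accommodate $\|\Theta\|_F=1/2$ with room to spare, one can instead expand $\EE_0[\prod(1+t'_{ij}X_iX_j)]$ directly as a weighted sum over Eulerian subgraphs and invoke the $|\cE(k,G)|\leq 2^k\|\rmA_G\|_F^k$ estimate of Lemma~\ref{boundpolydiff2} (applied with the edge weights $t'_{ij}$) to sum the resulting series and absorb the finite remainder into the constant $C$ in the statement.
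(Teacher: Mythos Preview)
Your change-of-measure strategy has a genuine gap: the claim that $\EE_0[L^2]\leq M$ for a universal constant $M$ is false under the sole assumption $\|\Theta\|_F\leq 1/2$. A concrete counterexample is the complete graph $K_d$ with all off-diagonal entries $\theta_{ij}=(2\sqrt{d(d-1)})^{-1}$, so that $\|\Theta\|_F=1/2$ exactly. Here $\bX^T\Theta\bX=\theta(S^2-d)$ with $S=\sum_iX_i$, hence
\[
\EE_0[L^2]=\frac{\EE_0[e^{2\theta S^2}]}{(\EE_0[e^{\theta S^2}])^2}.
\]
Since $2\theta d^2\to d$, the term $S=\pm d$ alone contributes $2\cdot 2^{-d}e^{2\theta d^2}\asymp (e/2)^d\to\infty$ to the numerator, while the denominator grows only sub-exponentially; a direct computation for $d=2,4,6,8$ already gives $\EE_0[L^2]\approx 1.37,\,2.31,\,3.95,\,6.78$, growing roughly geometrically. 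The underlying reason is that Cauchy--Schwarz replaces $\Theta$ by $2\Theta$, whose spectral norm can reach $1$, which is precisely where the Rademacher chaos MGF ceases to be controllable. Your Eulerian-expansion fallback attempts to bound the same divergent quantity (the weighted adjacency has $\|B\|_F$ up to $1$, so the geometric series $\sum_k(2\|B\|_F)^k$ fails to converge) and therefore cannot rescue the argument.

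The paper's proof avoids squaring $L$ altogether. It exploits the exponential-family structure to write the MGF of $W_H$ under $\PP_\Theta$ directly as a \emph{ratio} of partition functions,
\[
\EE_\Theta\exp\!\Big(\tfrac{\|A_H\|_F}{8}W_H\Big)=\frac{\EE_0\exp(\tfrac{1}{2}X^TJX)}{\EE_0\exp(\tfrac{1}{2}X^T\Theta X)},\qquad J=\Theta+\frac{A_H}{4\|A_H\|_F}.
\]
The point is that the perturbation $A_H/(4\|A_H\|_F)$ has Frobenius norm exactly $1/4$, so $\|J\|_F\leq 3/4<1$; the spectral bound of \cite{bhattacharya2015inference} (Lemma~\ref{lemma:psi1normineq}) then gives $\log\EE_0\exp(\tfrac12X^TJX)\leq\|J\|_F^2\leq 9/16$, while the denominator is $\geq 1$ by the same lemma's lower bound. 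This yields $\EE_\Theta\exp(c\sqrt{|E(H)|}\,W_H)\leq e$ for an explicit $c$, which is the $\psi_1$ bound. In other words, the paper perturbs $\Theta$ additively by a matrix of fixed norm rather than multiplying $\Theta$ by $2$, and that is exactly what keeps the argument inside the high-temperature window.
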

	We first prove that $\PP_{0,n}(\psi = 1) < \alpha / 2$. Under the null, $X_1,\ldots, X_d$ are independent Rademacher random variables. Therefore for every $H\in \cH$ we have $\EE_{0,n} \hat{W}_H = 0$. By Lemma~\ref{lemma:psi1normbound} with $\Theta = (0)_{d\times d}$, we have $\|W_H\|_{\psi_1} \leq C_1|E(H)|^{-1/2}$ for an absolute constant $C_1>0$. By Proposition~5.16 in \cite{vershynin2010introduction}, for $ \varepsilon\leq |E(H)|^{-1/2}$ we have
	\begin{align*}
	\PP_{0,n}\Big(\Big|\hat{W}_H - \EE_{0,n} \hat{W}_H \Big| > \varepsilon\Big) \leq 2\exp ( -C_2\cdot |E(H)|\cdot n\varepsilon^2),
	\end{align*}
	where $C_2$ is an absolute constant. 
	Setting the right-hand side above to be $\alpha/(2|\cH|)$ and solving for $\varepsilon$ shows that under the null hypothesis, with probability at least $1 - \alpha/(2|\cH|)$, we have
	\begin{align*}
	\hat{W}_H \leq C_3 \sqrt{\frac{\log|\cH| + \log(2/\alpha)}{|E(H)| n}} \leq C_3 \sqrt{\frac{2 \log|\cH|}{|E(H)| n}}, \end{align*}
	for absolute constant $C_3$. Note that the condition $\varepsilon\leq |E(H)|^{-1/2}$ is satisfied since we assume that $\log (|\cH|)/n = o(1)$. By definition, we have $m(\cH) \leq |V(H)|$. Moreover, we have
	\begin{align}\label{eq:EoverVbound}
	\cR \leq \frac{|E(H)|}{|V(H)|-1} + 1 \leq \frac{2|E(H)|}{|V(H)|} + 1 \leq \frac{4|E(H)|}{|V(H)|},
	\end{align}
	where the last inequality follows by $2|E(H)| \geq |V(H)|$.
	Therefore with probability at least $1 - \alpha/(2|\cH|)$,
	\begin{align*}
	\hat{W}_H \leq C_4 \sqrt{\frac{\log|\cH|}{m(\cH)}\cdot \frac{|V(H)|}{|E(H)|}\cdot \frac{1}{n}}\leq C_5 \sqrt{ \frac{M(\cH)}{\cR n} },
	\end{align*}
	where $C_4$, $C_5$ are absolute constants. Therefore by union bound, when $\kappa$ is chosen to be a large enough constant we have $\PP_{0,n}(\psi = 1) \leq \alpha / 2$.
	
	For any $\Theta\in \cS(\cG_1,\theta)$ with corresponding graph $G$, we now prove that $\PP_{\Theta,n}(\psi = 0) < \alpha / 2$. By the definition of witnessing set and \eqref{eq:EoverVbound}, there exists $H\in \cH$ which is a subgraph of $G$ and we have $|E(H)|/|V(H)| \geq \cR/4$. It now suffices to prove that 
	$$
	\PP_{\Theta,n}\Bigg(\hat{W}_{H} \leq \frac{\kappa}{4}\sqrt{\frac{M(\cH)}{\cR n}}\Bigg) \leq \frac{\alpha}{2}.
	$$
	Since $H$ is a subgraph of $G$, each edge in $E(H)$ is also an edge in $E(G)$. By the second Griffiths inequality (see \cite{griffiths1967correlations,kelly1968general}), for $\theta \leq 1$ we have
	\begin{align*}
	\EE_{\Theta,n} \hat{W}_{H} \geq \tanh(\theta)\geq \theta/2.
	\end{align*}
	Applying Lemma~\ref{lemma:psi1normbound} gives $\|W_H\|_{\psi_1} \leq C_6|E(H)|^{-1/2}$ for an absolute constant $C_6$. By Proposition~5.16 in \cite{vershynin2010introduction}, for $ \eta\leq |E(H)|^{-1/2}$ we have
	\begin{align*}
	\PP_{\Theta,n}\Big(\Big|\hat{W}_H - \EE_{\Theta,n} \hat{W}_H \Big| > \eta\Big) \leq 2\exp ( -C_7\cdot |E(H)|\cdot n\eta^2),
	\end{align*}
	for an absolute constant $C_7$. Therefore with probability at least $1-\alpha/2$, we have
	\begin{align*}
	\hat{W}_H &\geq \EE_{\Theta}\hat{W}_H - C_8 \sqrt{\frac{ \log(2/\alpha)}{|E(H)| n}} \geq \frac{\theta}{2} - C_9\sqrt{\frac{ |V(H)| }{m(\cH)} \cdot \frac{ \log(|\cH|) }{ |E(H)| n}} > \bigg(\frac{\kappa}{2} - C_{10}\bigg)\sqrt{\frac{ M(\cH)}{\cR n}},
	\end{align*}
	where $C_8$, $C_9$ and $C_{10}$ are absolute constants. Therefore when $\kappa$ is chosen as a large enough constant we have $\PP_{\Theta,n}(\psi = 0) \leq \alpha / 2$, and
	\begin{align*}
	\PP_{0,n}(\psi = 1) + \sup_{\Theta\in \cS(\cG_1,\theta)} \PP_{\Theta,n}(\psi = 0) \leq \alpha.
	\end{align*}
	This completes the proof.

\section{Proof of Theorems in Section~\ref{section:complowerbound}}\label{section:complowerboundproof}

\begin{proof}[Proof of Theorem~\ref{thm:CWN:closeto:Gaussian}]
We consider the normal distribution $N(0, (1- 2s\theta)^{-1})$ which has density
\begin{align*}
    \bigg(\frac{1-2s\theta}{2\pi}\bigg)^{1/2} \exp(s\theta y^2) \exp(-y^2/2).
\end{align*}
Denote with 
\begin{align*}
\int_{-\infty}^{\infty} \cosh(\sqrt{2\theta}y)^s \frac{\exp(-y^2/2)}{\sqrt{2\pi}} dy = \frac{Z_{Y'}}{\sqrt{2\pi}} =: C(\theta, s),
\end{align*}
where $Z_{Y'}$ is the notation used in Lemma \ref{CWN:lemma}.

The proof begins by using Pinsker's inequality to bound the total variation with the square root of the KL divergence. We need to control

\begin{align*}
& n\int_{-\infty}^\infty \log \frac{(\frac{1-2s\theta}{2\pi})^{1/2} \exp(s\theta y^2) \exp(-y^2/2)}{\cosh(\sqrt{2\theta}y)^s \frac{\exp(-y^2/2)}{C(\theta,s)\sqrt{2\pi}}}\bigg(\frac{1-2s\theta}{2\pi}\bigg)^{1/2} \exp(s\theta y^2) \exp(-y^2/2)dy\\
& = n \bigg(\log(C(\theta,s) \sqrt{1 - 2s\theta}) + \int_{-\infty}^\infty (s\theta y^2 - s \log \cosh(\sqrt{2\theta}y)) \bigg(\frac{1-2s\theta}{2\pi}\bigg)^{1/2} \exp(s\theta y^2) \exp(-y^2/2)dy\bigg)
\end{align*}
One can check that $x^2/2 - \log(\cosh(x)) - x^4/12 \leq 0$ (we give a proof in the supplement), therefore the above is bounded as
\begin{align*}
& n \bigg(\log(C(\theta,s) \sqrt{1 - 2s\theta}) + \int_{-\infty}^\infty s \frac{4\theta^2 y^4}{12} \bigg(\frac{1-2s\theta}{2\pi}\bigg)^{1/2} \exp(s\theta y^2) \exp(-y^2/2)dy\bigg)\\
& = n (\log(C(\theta,s) \sqrt{1 - 2s\theta}) + s \theta^2 (1-2s\theta)^{-2}).
\end{align*}
We will now bound $C(\theta,s)$ from above. By Lemma \ref{CWN:lemma} it follows that:
\begin{align*}
C(\theta, s) = \frac{Z_{Y'}}{\sqrt{2\pi}} = \sum_{k = 0}^s \frac{{s \choose k} \exp(\theta(2k-s)^2)}{2^s} =\sum_{m = 0}^{\infty} \frac{\theta^m \EE (2k - s)^{2m}}{m!},
\end{align*}
where $k \sim \mathrm{Bin}(s,.5)$. 




We will now show that the central moments of the balanced binomial distribution, $\mathrm{Bin}(s,.5)$, satisfy certain sharp inequalities in terms of polynomials in $s$. Note that the moments $\EE (2k - s)^{2m}$ can be thought of as $\EE (X_1 + \ldots + X_s)^{2m}$ where $X_i$ are i.i.d. Rademacher random variables. Denote with $P_{2m}(s) = \EE (X_1 + \ldots + X_s)^{2m}$. We will now show the following recursive formula for $P_{2m}(s)$
\begin{lemma}[Recursion for $P_{2m}(s)$]\label{tricky:combinatorial:lemma}
\begin{align*}
    P_{2m}(s) = \sum_{k = 0}^{m-1} (-1)^k{2m-1 \choose 2k + 1} E_{2k + 1} s P_{2m-2k - 2}(s),
\end{align*}
where $E_{2k+1}$ are the tangent (aka zag) numbers, the first few of which are given by $E_{1} = 1$, $E_3 = 2$, $E_5 = 16$, $E_7 = 272$ and so on. In addition it holds that (for $s \in \NN$)
\begin{align}
    &\sum_{k = 2l + 1}^{m-1} (-1)^k{2m-1 \choose 2k + 1} E_{2k + 1} s P_{2m-2k - 2}(s) \leq 0, \label{negative:summation}\\
    & \sum_{k = 2l}^{m-1} (-1)^k{2m-1 \choose 2k + 1} E_{2k + 1} s P_{2m-2k - 2}(s) \geq 0.\label{positive:summation}
\end{align}

\end{lemma}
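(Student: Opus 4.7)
The plan splits into two parts: deriving the recursion, then proving the two partial-sum inequalities.

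For the recursion, I would use the exponential generating function $F(t) := \cosh(t)^s = \EE e^{tS}$ with $S = X_1 + \cdots + X_s$; by construction $F(t) = \sum_{m \geq 0} P_{2m}(s)\, t^{2m}/(2m)!$. Since $(\log F)'(t) = s(\log\cosh)'(t) = s\tanh(t)$, the function $F$ satisfies the ODE $F'(t) = s\tanh(t) F(t)$. Substituting the Taylor expansion $\tanh(t) = \sum_{k \geq 0}(-1)^k E_{2k+1} t^{2k+1}/(2k+1)!$, applying the Cauchy product formula, and equating coefficients of $t^{2m-1}/(2m-1)!$ on both sides yields the claimed recursion for $P_{2m}(s)$.

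For the partial-sum inequalities, I would first reduce both of them to the single sign claim $[t^{2m-1}] g_l(t) F(t) \geq 0$, where $g_l(t) := (-1)^l(\tanh(t) - T_l(t))$ and $T_l(t) := \sum_{k=0}^{l-1}(-1)^k E_{2k+1} t^{2k+1}/(2k+1)!$. Indeed, repeating the Cauchy-product argument above but with the tail $R_l := \tanh - T_l$ in place of $\tanh$ gives
\[
\sum_{k = l}^{m-1}(-1)^k \binom{2m-1}{2k+1} E_{2k+1}\, s\, P_{2m-2k-2}(s) \;=\; s\,(2m-1)!\,(-1)^l [t^{2m-1}] g_l(t) F(t),
\]
so the cases $l = 2l'$ and $l = 2l' + 1$ correspond exactly to the two displayed inequalities in the lemma.

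To establish $[t^{2m-1}] g_l(t) F(t) \geq 0$, I would combine the Mittag--Leffler expansion of $\tanh$ with the Weierstrass product for $\cosh$. Starting from $\tanh(t) = \sum_{k \geq 1} 8t/[(2k-1)^2\pi^2 + 4t^2]$, expanding each summand as a geometric series in $4t^2/[(2k-1)^2\pi^2]$, and summing the geometric tail from $j \geq l$ gives (after reindexing)
\[
g_l(t) \;=\; 2^{2l+3}\,t^{2l+1}\sum_{k \geq 1}\frac{1}{(2k-1)^{2l}\pi^{2l}\bigl((2k-1)^2\pi^2 + 4t^2\bigr)}.
\]
The key observation is that, for every $k \geq 1$,
\[
\frac{\cosh^s(t)}{(2k-1)^2\pi^2 + 4t^2} \;=\; \frac{1}{(2k-1)^2\pi^2}\Bigl(1 + \tfrac{4t^2}{(2k-1)^2\pi^2}\Bigr)^{s-1}\prod_{j \neq k}\Bigl(1 + \tfrac{4t^2}{(2j-1)^2\pi^2}\Bigr)^{s},
\]
a convergent (infinite) product of factors whose Taylor series in $t^2$ have only nonnegative coefficients. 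Multiplying by the positive constant $2^{2l+3}/[(2k-1)^{2l}\pi^{2l}]$ and the monomial $t^{2l+1}$, then summing over $k \geq 1$ (an interchange justified because every term is nonnegative and the total is dominated by an absolutely convergent $p$-series), represents $g_l(t) F(t)$ as a nonnegative combination of power series with nonnegative coefficients. Hence every Taylor coefficient of $g_l(t) F(t)$ is nonnegative; in particular $[t^{2m-1}] g_l(t) F(t) \geq 0$ as needed.

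The main technical obstacle will be the analytic bookkeeping: the rearrangement producing the partial-fraction form of $g_l$ is absolutely convergent only for $|t| < \pi/2$, but both sides are meromorphic in $t$, so the identity extends globally by analytic continuation. With that in place, the remaining coefficient-extraction-and-summation interchanges reduce to routine absolute-convergence checks, the positivity being evident termwise.
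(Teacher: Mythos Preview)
Your argument is correct, and it is genuinely different from the paper's proof. The paper argues combinatorially: it interprets $P_{2m}(s)$ through the expansion of $(X_1+\cdots+X_s)^{2m}$, fixes the index $t$ chosen by the first ``bracket,'' and shows by downward induction on $l$ that the tail $\sum_{k=l}^{m-1}(-1)^k\binom{2m-1}{2k+1}E_{2k+1}\,s\,P_{2m-2k-2}(s)$ equals $(-1)^l$ times an explicit count of sequences of bracket positions of the shape $a_1<a_2>a_3<\cdots<a_{2l}>a_{2l+1}>a_{2l+2}>\cdots$, whence the sign follows immediately from the count being nonnegative. This simultaneously yields the recursion and both inequalities in a single bijective sweep, and gives a concrete combinatorial meaning to each partial sum. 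Your route, by contrast, is purely analytic: the ODE $F'=s\tanh\cdot F$ for $F=\cosh^s$ gives the recursion mechanically, and the sign inequalities fall out of the Mittag--Leffler expansion of $\tanh$ combined with the Weierstrass product for $\cosh$, the crucial point being that dividing $\cosh^s$ by a single quadratic factor $(2k-1)^2\pi^2+4t^2$ merely lowers one exponent from $s$ to $s-1\geq 0$ and hence preserves nonnegativity of all Taylor coefficients. Your approach avoids having to invent the right combinatorial objects and makes transparent exactly where the hypothesis $s\in\NN$ enters (namely, so that $(1+x)^{s-1}$ has nonnegative binomial coefficients); the paper's approach, on the other hand, explains \emph{why} the partial sums have a sign by exhibiting what they count. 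One small remark: your ``main technical obstacle'' about analytic continuation beyond $|t|<\pi/2$ is in fact unnecessary, since you only need the power-series identity in a neighborhood of $0$ to read off Taylor coefficients.
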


The proof of Lemma \ref{tricky:combinatorial:lemma} (and all subsequent lemmas) is deferred to the supplement. Here we give a little comment on the tangent numbers: first they appear in the Taylor expansion of $\tan(x)$ around $0$. Second, they are the number of alternating (aka zigzag) permutations of $2k+1$ numbers, i.e., permutations of the type $a_1 < a_2 > a_3 < a_4 > a_5 < \ldots > a_{2k+1}$. Using Lemma \ref{tricky:combinatorial:lemma} and the fact that $\EE(2k-s)^0 = 1$, it is simple to verify that $\EE(2k-s)^{2m}$ are polynomials of $s$ with integer coefficients of degree $m$. Suppose that $P_{2m}(s) = \sum_{k = 0}^m a^{2m}_k s^{m-k}$. Next we need the following 

\begin{lemma}[Sharp Bounds for $P_{2m}(s)$]\label{bounds:for:p2ms} We have that for any $s \in \NN$ and $l$ the following is true (provided that the index in the summation makes sense):
\begin{align*}
    \sum_{k = 0}^{2l + 1} a^{2m}_k s^{m-k}  \leq P_{2m}(s) \leq \sum_{k = 0}^{2l} a^{2m}_k s^{m-k} 
\end{align*}
\end{lemma}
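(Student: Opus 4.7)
The plan is to induct on $m$ using the recursion of Lemma~\ref{tricky:combinatorial:lemma} together with its tail-sum inequalities \eqref{negative:summation}--\eqref{positive:summation}. The base case $m=1$ follows at once from $P_2(s)=s$, for which both truncations with $l=0$ collapse to equality. For the inductive step, I write $A_L(s):=\sum_{j=0}^{L}a^{2m}_j s^{m-j}$, with analogous notation $A^{2m'}_{L'}(s)$ for the lower moments, and aim to pin down the sign of $P_{2m}(s)-A_L(s)$ as a function of the parity of $L$.

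The first step is a bookkeeping identity expressing $A_L(s)$ directly through the recursion. Substituting the polynomial expansion of $P_{2m-2k-2}(s)$ into Lemma~\ref{tricky:combinatorial:lemma} and matching coefficients of $s^{m-j}$ yields $a^{2m}_j=\sum_{k=0}^{j}(-1)^k c_k\,a^{2m-2k-2}_{j-k}$, where $c_k:=\binom{2m-1}{2k+1}E_{2k+1}$. Swapping summation order in the definition of $A_L$ and re-indexing then gives
\[
A_L(s)\;=\;\sum_{k=0}^{L}(-1)^k c_k\,s\,A^{2m-2k-2}_{L-k}(s).
\]
Subtracting this from the full recursion $P_{2m}(s)=\sum_{k=0}^{m-1}(-1)^k c_k\,s\,P_{2m-2k-2}(s)$ produces the clean two-piece decomposition
\[
P_{2m}(s)-A_L(s)=\sum_{k=0}^{L}(-1)^k c_k\,s\bigl[P_{2m-2k-2}(s)-A^{2m-2k-2}_{L-k}(s)\bigr]+\sum_{k=L+1}^{m-1}(-1)^k c_k\,s\,P_{2m-2k-2}(s).
\]

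The second step is to show that both sums on the right carry the same sign, determined solely by the parity of $L$. For the first sum, the inductive hypothesis assigns each bracket $P_{2m-2k-2}(s)-A^{2m-2k-2}_{L-k}(s)$ a sign governed by the parity of $L-k$; multiplying by $(-1)^k$ cancels the $k$-dependence and leaves a uniform sign that depends only on the parity of $L$. For the second sum, the inequalities \eqref{negative:summation} or \eqref{positive:summation}, applied at the appropriate index (namely $2l+1=L+1$ or $2l=L+1$ according to the parity of $L$), deliver exactly that same sign for the residual tail. Because $c_k s\ge 0$ and $E_{2k+1}>0$ for the tangent numbers, the two nonnegatively weighted contributions combine without cancellation, which gives $P_{2m}(s)-A_L(s)$ the predicted sign and closes the induction.

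The main obstacle is the parity bookkeeping in this last step: one must verify that, term by term, the sign emerging from $(-1)^k$ multiplied by the inductive sign of $[P_{2m-2k-2}-A^{2m-2k-2}_{L-k}]$ matches the sign that \eqref{negative:summation}--\eqref{positive:summation} assigns to the tail starting at $k=L+1$. Once this alignment is checked for one parity of $L$, the other parity follows by running the identical argument with all signs reversed, so no additional ingredients are needed beyond the recursion and its partial-sum sign inequalities already provided by Lemma~\ref{tricky:combinatorial:lemma}.
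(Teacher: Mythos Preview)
Your approach is essentially the same as the paper's: both argue by induction on $m$, decompose $P_{2m}(s)-A_L(s)$ through the recursion of Lemma~\ref{tricky:combinatorial:lemma}, apply the inductive hypothesis term-by-term to the part with $0\le k\le L$, and invoke the tail-sum inequalities \eqref{negative:summation}--\eqref{positive:summation} for $k\ge L+1$. Your explicit identity $A_L(s)=\sum_{k=0}^{L}(-1)^k c_k\,s\,A^{2m-2k-2}_{L-k}(s)$ is exactly what the paper means by ``this subtraction will erase the first $\ldots$ coefficients,'' so the two arguments coincide.
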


We are now ready to begin our calculations. We will bound the moments of $\EE(2k-s)^{2m}$ with the first three coefficients of the polynomial $P_{2m}(s)$. In order to do so we need to derive the values of these coefficients. We do so in the following lemma.

\begin{lemma}[Coefficients of $P_{2m}(s)$]\label{coefficients:of:p2ms}
We have 
\begin{align*}
    a^{2m}_0 & = (2m)!!, a^{2m}_{1} = -m(m-1) (2m)!!/3\\
    a^{2m}_{2} & = (2m-1)a^{2m-2}_{2} + 2 {2m-1\choose 3}(m-2)(m-3)\frac{(2m-4)!!}{3}  + 16{ 2m-1\choose 5}(2m-6)!!,
\end{align*}
where $a^{2m}_0$ is defined as $1$ for $m = 0$, $ a^{2m}_{2}$ is defined for $m \geq 3$ and is $0$ otherwise.
\end{lemma}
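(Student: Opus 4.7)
The plan is to extract the first three coefficients of $P_{2m}(s)$ directly from the recursion of Lemma~\ref{tricky:combinatorial:lemma}, and then induct on $m$. Writing $P_{2m}(s) = \sum_{j=0}^m a^{2m}_j s^{m-j}$ and substituting into the recursion, matching the coefficient of $s^{m-j}$ on both sides yields
\begin{align*}
    a^{2m}_j \;=\; \sum_{k=0}^{\min(j,\, m-1)} (-1)^k \binom{2m-1}{2k+1}\, E_{2k+1}\, a^{2m-2k-2}_{j-k},
\end{align*}
because $s\,P_{2m-2k-2}(s)$ has degree $m-k$, so only values of $k$ with $k \le j$ can produce a power as high as $s^{m-j}$. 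This single identity drives all three claims.

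For $a^{2m}_0$ only $k = 0$ is admissible, giving the one-step recurrence $a^{2m}_0 = (2m-1)\, a^{2m-2}_0$, which together with the base $P_0(s) = 1$ yields the claimed closed form by induction. For $a^{2m}_1$ the indices $k = 0, 1$ both contribute, producing $a^{2m}_1 = (2m-1)\, a^{2m-2}_1 - 2\binom{2m-1}{3}\, a^{2m-4}_0$. I would then substitute the closed form for $a^{2m-4}_0$ from the first step and induct on $m$ (base case $a^2_1 = 0$), using the identity $\binom{2m-1}{3} = (2m-1)(2m-2)(2m-3)/6$ together with the collapsing of products of consecutive odd integers into the appropriate double factorial to combine the two terms. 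Finally, for $a^{2m}_2$ the three admissible indices $k = 0, 1, 2$ give three summands, and substituting the previously established expressions for $a^{2m-4}_1$ and $a^{2m-6}_0$ reproduces exactly the stated recursive form; the boundary cases $a^{2m}_2 = 0$ for $m < 3$ are handled by direct inspection of $P_2, P_4$.

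The main obstacle is the algebra for $a^{2m}_1$: one has to simultaneously manage shifts in the double factorial and in a binomial coefficient inside the inductive step, and the two resulting terms must combine cleanly into $-m(m-1)\cdot(\text{double factorial})/3$ after pulling out a common factor of $(m-1)$ and using the relation $m(m-1) = (m-1)(m-2) + 2(m-1)$. The analogous manipulations for $a^{2m}_0$ and $a^{2m}_2$ are routine book-keeping by comparison.
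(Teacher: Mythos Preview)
Your proposal is correct and is exactly the approach the paper takes: the paper's proof says only that the lemma ``simply compares the coefficients of the polynomials given in Lemma~\ref{tricky:combinatorial:lemma}, and can be verified by a direct calculation,'' with the details omitted. You have in fact supplied more of those details than the paper does, and your coefficient-matching identity and the three-case breakdown for $j=0,1,2$ are precisely what that omitted calculation amounts to.
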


Next we will control the coefficient $a^{2m}_{2}$ from above. We have
\begin{lemma}\label{bound:on:a2m2}
We have that for a sufficiently large absolute constant $C > 0$:
\begin{align*}
    a^{2m}_2 \leq C (m-2)(m-1)m(m+1) (2m)!!
\end{align*}
\end{lemma}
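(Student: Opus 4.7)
The plan is to prove this bound by strong induction on $m$, using the explicit recursion for $a_2^{2m}$ from Lemma \ref{coefficients:of:p2ms}. For $m \leq 2$ the coefficient $a_2^{2m}$ is $0$ by convention, so the inequality holds for any $C \geq 0$. For small values $m = 3, 4, 5, \ldots$ (up to some finite threshold) we can compute $a_2^{2m}$ directly and choose $C$ large enough so that the desired bound is satisfied; the key point is only to ensure the inductive step closes for all sufficiently large $m$.

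For the inductive step, I would assume $a_2^{2m-2} \leq C(m-3)(m-2)(m-1)m \,(2m-2)!!$ and substitute into
\[ a_2^{2m} = (2m-1)a_2^{2m-2} + 2\binom{2m-1}{3}(m-2)(m-3)\tfrac{(2m-4)!!}{3} + 16\binom{2m-1}{5}(2m-6)!!. \]
Using $(2m-2)!! = (2m)!!/(2m)$ and $(2m-4)!!\,(2m-2) = (2m-2)!! = (2m)!!/(2m)$, each of the three terms on the right can be expressed as an explicit rational multiple of $(2m)!!$ times a polynomial in $m$. The first term becomes $C\alpha_m (m-2)(m-1)m(m+1)(2m)!!$ where
\[ \alpha_m \;=\; \frac{(m-3)(2m-1)}{(m+1)(2m)},\qquad 1 - \alpha_m \;=\; \frac{9m-3}{2m(m+1)}, \]
so the ``slack'' coming from the induction hypothesis is $C(1-\alpha_m)(m-2)(m-1)m(m+1)(2m)!! = \tfrac{C(9m-3)(m-2)(m-1)}{2}(2m)!!$, which is of order $m^3 (2m)!!$. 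The remaining two terms in the recursion are also of order $m^3(2m)!!$ and $m^2(2m)!!$ respectively, so the slack absorbs them provided $C$ is a large enough absolute constant.

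Concretely, the inductive step reduces to verifying the polynomial inequality
\[ \frac{(2m-1)(2m-3)(m-2)(m-3)}{18m} + \frac{(2m-1)(2m-3)(2m-5)}{15m} \;\leq\; \frac{C(9m-3)(m-2)(m-1)}{2} \]
for all $m$ above the base-case threshold. Both sides are polynomials in $m$ of degree $3$, and the ratio of leading coefficients is $\tfrac{4/9}{9C/2}$, so any $C$ larger than some small absolute threshold makes the inequality eventually true; a finite check then fixes the remaining values of $m$ and we enlarge $C$ to cover them. The only subtle point, and the one I would be most careful about, is the bookkeeping in the inductive step: making sure that after factoring out $(2m)!!$ and $(m-2)(m-1)$ the remaining polynomial inequality in $m$ really does have the right leading behaviour, since without the cancellation $9m-3$ produced by $1-\alpha_m$ the induction would fail.
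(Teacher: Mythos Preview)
Your proposal is correct and follows essentially the same route as the paper: induction on $m$ using the recursion from Lemma~\ref{coefficients:of:p2ms}, then reducing the inductive step to a comparison of degree-$3$ polynomials (after factoring out $(2m)!!$). The only difference is bookkeeping---the paper uses the cruder bound $(2m-1)(2m-2)!!\leq (2m)!!$, which gives slack $4C(m-2)(m-1)m$, whereas you track the exact factor $\alpha_m$ and obtain the slightly larger slack $\tfrac{C(9m-3)(m-2)(m-1)}{2}$; both suffice. One small arithmetic slip: the leading coefficient of your left-hand side is $2/9$, not $4/9$ (since $(2m)^2\cdot m^2/(18m)=2m^3/9$), but this does not affect the argument.
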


Using Lemma \ref{bound:on:a2m2} we conclude that
\begin{align*}
    \EE (2k-s)^{2m} & \leq (2m)!! s^m - m(m-1) (2m)!!/3 s^{m-1}  + C (m-2)(m-1)m(m+1)(2m)!! s^{m-2} 
\end{align*}
Hence
\begin{align*}
    C(\theta, s) & \leq \sum_{m = 0}^{\infty} \frac{ (2m)!! (\theta s)^m}{m!} - \theta \sum_{m = 2}^{\infty} \frac{m(m-1) (2m)!! (\theta s)^{m-1}}{3 m!} + \theta^2 \sum_{m = 3}^{\infty} \frac{C (m-2)(m-1)m(m+1)(2m)!! (\theta s)^{m-2}}{m!}
\end{align*}

Simple algebra verifies that $\frac{(2m)!!}{m!} = {-1/2 \choose m}(-2)^m$ and $\frac{m(m-1) (2m)!!}{3m!} = {-5/2 \choose m-2}(-2)^{m-2}$ and therefore (using $(2m)!!/m! \leq 2^m$):
\begin{align*}
    C(\theta, s) & \leq (1-2s\theta)^{-1/2} - s\theta^2(1-2s\theta)^{-5/2} + \theta^2 (\theta s)\sum_{m = 3}^{\infty} C (m-2)(m-1)m(m+1) (2\theta s)^{m-3}.
\end{align*}
Now we use a simple trick that 
\begin{align*}
     \sum_{m = 3}^{\infty}(m-2)(m-1)m(m+1) x^{m-3} = \frac{d^4}{dx^4} (\sum_{m = 3}^{\infty} x^{m+1}) = \frac{d^4}{dx^4} \frac{x^4}{1-x} = \sum_{i = 1}^5 C_i \frac{x^{i-1}}{(1-x)^{i}},
\end{align*}
where $C_i$ are some absolute constants. Finally we obtain
\begin{align*}
    C(\theta, s) & \leq (1-2s\theta)^{-1/2} - s\theta^2(1-2s\theta)^{-5/2} + \theta^2 \sum_{i = 1}^5 C'_i \frac{(s\theta)^{i}}{(1-2s\theta)^{i}},
\end{align*}
for some asbolute constants. The above can clearly be made very small as $s\theta$ is made small.

Finally using $\log(1-x) \leq -x$ for $x \geq 0$ small enough we have 
\begin{align*}
    n \log(C(\theta,s) \sqrt{1 - 2s\theta})+n s\theta^2 (1-2s\theta)^{-2} \asymp n \theta^2 \sum_{i = 1}^5 C'_i \frac{(s\theta)^{i}}{(1-2s\theta)^{i - 1/2}}
\end{align*}
which is going to be very small given that $\theta \ll \frac{1}{\sqrt{n}} \wedge \frac{1}{s}$.
\end{proof}

\begin{proof}[Proof of Theorem \ref{thm:Ising:computation}]
Recall that $\bU_{i}$ and $\bV_{i}$ for $i \in [n]$, denote the $n$ i.i.d. copies of the models $\sign(N(0, I + \sigma 1_{[s]}1_{[s]}^T))$ and $n$ i.i.d. copies of the Curie-Weiss model with parameters $s$ and $\theta$. Let $\Ub = \{\bU_i\}_{i \in [n]}$ and $\Vb = \{\bV_{i}\}_{i \in [n]}$ be the collections of the variables $\bU_{i}$ and $\bV_{i}$ for $i \in [n]$ and recall that $\bY$ and $\bY'$ are the collections of the variables $\{Y_i\}_{i \in [n]}$ and $\{Y_i'\}_{i \in [n]}$, i.e., $\bY$ is a vector consisting of $n$ i.i.d copies from a Gaussian distribution with variance $(1-2s\theta)^{-1}$ and $\bY'$ is a vector consisting of $n$ i.i.d. copies from a CWN distribution with parameters $s, \theta$. As we argued earlier, it suffices to prove that the total variation $\operatorname{TV}(\mathcal{L}(\Ub),\mathcal{L}(\Vb))$ is small.

To do so we will use Fact 3.1. part 5 of \cite{brennan2019optimal}, which states that
\begin{align*}
\operatorname{TV}(\mathcal{L}(\Ub),\mathcal{L}(\Vb)) & \leq \operatorname{TV}(\mathcal{L}(\bY),\mathcal{L}(\bY'))  + \EE_{\by \sim \bY} \operatorname{TV}(\mathcal{L}(\Ub | \bY = \by), \mathcal{L}(\Vb | \bY' = \by)). 
\end{align*}
By Theorem \ref{thm:CWN:closeto:Gaussian} we know that $\operatorname{TV}(\mathcal{L}(\bY),\mathcal{L}(\bY'))$ is small under the condition $\theta \ll \frac{1}{\sqrt{n}} \wedge \frac{1}{s}$. We will now argue that the second term $\operatorname{TV}(\mathcal{L}(\Ub | \bY = \by), \mathcal{L}(\Vb | \bY' = \by))$ is also small under the same condition which will complete the proof. 

Recall that $\sigma = \frac{\pi \theta}{1- 2 s\theta}$ and let $\kappa = \pi \theta$. Let \begin{align*}
    g(\sqrt{\kappa 2/\pi } y) := \frac{\exp(\sqrt{\kappa 2/\pi } y)}{\exp(\sqrt{\kappa 2/\pi } y) + \exp(-\sqrt{\kappa 2/\pi } y)}.
\end{align*} 
We will now control 
\begin{align}\label{TV2:bound}
\MoveEqLeft \operatorname{TV}(\mathcal{L}(\Ub | \bY = \by), \mathcal{L}(\Vb | \bY' = \by))^2 \nonumber\\
& = \bigg(\sum_{\{k_i\}_{i = 1}^n} \bigg|\prod_{i \in [n]} {s \choose k_i}(\Phi(\sqrt{\kappa} y_i))^{k_i} (1 - \Phi(\sqrt{\kappa} y_i))^{s-k_i}\nonumber \\
& - \prod_{i \in [n]} {s \choose k_i} g(\sqrt{\kappa 2/\pi } y_i)^{k_i} (1-g(\sqrt{\kappa 2/\pi } y_i)^{s-k_i}\bigg|\bigg)^2\nonumber\\
& \leq \frac{n}{2} \sum_{k_i = 0}^s \bigg[k_i (\log \Phi(\sqrt{\kappa}y_i) - \log g(\sqrt{\kappa 2/\pi } y_i) ) \nonumber\\
& + (s - k_i) (\log (1 - \Phi(\sqrt{\kappa}y_i)) - \log (1 - g(\sqrt{\kappa 2/\pi}y_i)) \bigg] \times\nonumber
\\& \times {s \choose k_i} \Phi(\sqrt{\kappa } y_i)^{k_i} (1-\Phi(\sqrt{\kappa } y_i))^{s-k_i},
\end{align}
where the last inequality is Pinsker's inequality. We now write the following inequalities which can be verified with a direct calculation from the Taylor expansions of the functions (we provide short proofs in the supplement)
\begin{align*}
\log \Phi(x) & \leq -\log(2) + \sqrt{\frac{2}{\pi}}x -\frac{x^2}{\pi} - \frac{\pi-4}{3 \sqrt{2} \pi^{3/2}} x^3  + \frac{\pi- 3}{3 \pi^2}x^4 +  \frac{96-40\pi + 3\pi^2}{60\sqrt{2}\pi^{5/2}}x^5 + Cx^6,\\
\log (1-\Phi(x)) & \leq -\log(2) - \sqrt{\frac{2}{\pi}}x -\frac{x^2}{\pi} + \frac{\pi-4}{3 \sqrt{2} \pi^{3/2}} x^3 + \frac{\pi - 3}{3 \pi^2}x^4 -  \frac{96-40\pi + 3\pi^2}{60\sqrt{2}\pi^{5/2}}x^5 + Cx^6,\\
\log g(\sqrt{2/\pi}x) & \geq - \log(2)+ \sqrt{\frac{2}{\pi}}x -\frac{x^2}{\pi} + \frac{x^4}{3\pi^2} - \frac{8x^6}{45\pi^3},\\
\log (1-g(\sqrt{2/\pi}x)) & \geq - \log(2)- \sqrt{\frac{2}{\pi}}x -\frac{x^2}{\pi} + \frac{x^4}{3\pi^2} - \frac{8x^6}{45\pi^3},
\end{align*}
for a sufficiently large absolute constant $C > 0$. Hence continuing the bound from \eqref{TV2:bound} we obtain the following:

\begin{align}
\MoveEqLeft \sum_{k_i = 0}^s \bigg[(s - 2k_i)\frac{\pi-4}{3\sqrt{2}\pi^{3/2}} (\sqrt{\kappa}y_i)^3 + s\frac{\pi-4}{3\pi^2}(\sqrt{\kappa}y_i)^4 + (2k_i-s)\frac{96-40\pi + 3\pi^2}{60\sqrt{2}\pi^{5/2}}(\sqrt{\kappa}y_i)^5 \nonumber \\
&+ (\frac{8}{45 \pi^3} + C)s(\sqrt{\kappa}y_i)^6\bigg]{s \choose k_i} \Phi(\sqrt{\kappa } y_i)^{k_i} (1-\Phi(\sqrt{\kappa } y_i))^{s-k_i} \nonumber\\
& = \frac{\pi-4}{3\sqrt{2}\pi^{3/2}}s(1 - 2 \Phi(\sqrt{\kappa}y_i))(\sqrt{\kappa}y_i)^3 + s\frac{\pi-4}{3\pi^2}(\sqrt{\kappa}y_i)^4 + \nonumber \\
& + s(2 \Phi(\sqrt{\kappa}y_i) - 1)\frac{96-40\pi + 3\pi^2}{60\sqrt{2}\pi^{5/2}}(\sqrt{\kappa}y_i)^5 + s(\sqrt{\kappa}y_i)^6(\frac{8}{45 \pi^3} + C). \label{long:summation:eq}
\end{align}
Next we note that (and give proofs in the supplement)
\begin{align*}
& (1 - 2\Phi(x))x^3 \leq -\sqrt{\frac{2}{\pi}}x^4 + \frac{x^6}{3 \sqrt{2\pi}}, \\
&(2 \Phi(x) - 1) x^5 \leq \sqrt{\frac{2}{\pi}}x^6. 
\end{align*}
Putting everything together we conclude that there exists a universal constant $C$ such that \eqref{long:summation:eq} is bounded as $C s \kappa^3 y_i^6$
Then using the fact that square root is a concave function, by Jensen's inequality we can bring the expectations over $y_i$ inside the square root to obtain
\begin{align*}
 \EE_{y \sim \bY} \operatorname{TV}(\cL(\Ub | \bY = \by), \cL(\Vb | \bY' = \by)) \leq \sqrt{n s \kappa^3 C\EE y_i^6} \asymp \sqrt{ns\kappa^3/(1 - 2s\theta)^3}. 
 \end{align*}
 which completes the proof. 
\end{proof}

\section{Discussion}

In this paper we studied structure detection problems in zero-field ferromagnetic Ising models. Our upper and lower bounds demonstrated that graph arboricity is a key concept which drives the testability of structure detection. We furthermore argued that under a sparse PCA conjecture no polynomial time tests can test the problem unless the signal strength is of the order of $\frac{1}{\sqrt{n}}$, which is statistically sub-optimal for graphs with high arboricity. 

There are several important questions which we leave for future work. First, our upper bound results are derived under the assumption that $\|\Theta\|_F \leq \frac{1}{2}$. This assumption is needed to ensure that the terms \eqref{WH:def} concentrate around their mean value. This may not be a necessary condition, and we anticipate that the tests we develop might work beyond this regime.

Second, an interesting question that is left open is whether one can develop upper and lower bounds for problems of the type \eqref{eq:testdefS2} in the dense regime when $s \gg \sqrt{d}$. We believe that this regime may require completely different tests than the ones we developed in this paper. 


\section*{Funding}

This work was supported by the National Science Foundation [BIGDATA 1840866, RI 1408910, CAREER 1841569, TRIPODS 1740735 to H.L.]; and an Alfred P Sloan Fellowship to H.L.

\section*{Acknowledgements}
We would like to thank the anonymous reviewer, associate editor and editor for their helpful comments.

\section*{Data Availability Statement}
No new data were generated or analysed in support of this research.

\appendix
\section*{Appendix}

\section{Proofs}\label{proofs:mainsection}
\subsection{Lower Bound Proofs}\label{subsec:lowerproof}

We first introduce two important lemmas.

\begin{lemma}\label{lemma:cEbounds} For a multigraph $G=(\overline{V},E)$, define the following two classes of Eulerian spanning subgraphs and connected Eulerian subgraphs of $G$ with $k$ edges. 
	\begin{align*}
	\cE_c(k,G) :=& \big\{ \tilde{G} = (\tilde{V},\tilde{E}):\tilde{V}\subseteq \overline{V},~\tilde{E}\subseteq E,~|\tilde{E}| = k,~\tilde{G}\text{ is a connected}\\&~~\text{Eulerian graph}\big\},\\
	\cE(k,G) :=& \big\{ \tilde{G} = (\overline{V},\tilde{E}): \tilde{E}\subseteq E,~|\tilde{E}| = k,~\tilde{G}\text{ is an Eulerian graph}\big\}.
	\end{align*}
	Let $\rmA$ be the adjacency matrix of $G$. Then for $k\geq 2$, we have
	\begin{align*}
	|\cE_c(k,G)| \leq \|\rmA\|_F^k,\text{ and }|\cE(k,G)| \leq 2^k\|\rmA\|_F^k.
	\end{align*} 
\end{lemma}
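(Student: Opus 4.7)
The plan is to handle the two inequalities in turn: the first reduces to counting closed walks together with a spectral inequality, and the second reduces to the first via a connected-component decomposition.

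For $|\cE_c(k,G)| \le \|\rmA\|_F^k$, I would associate to each connected Eulerian subgraph $\tilde G$ a canonical Eulerian circuit, which exists by Lemma~\ref{lemma:Euler} because $\tilde G$ is connected with every vertex of even degree. Such a circuit is a closed walk of length $k$ in $G$, and the map $\tilde G \mapsto (\text{canonical circuit})$ is an injection into the set of closed walks of length $k$, since the edge set of the circuit recovers $\tilde G$. The number of closed walks of length $k$ equals $\tr(\rmA^k)$, because $(\rmA^k)_{vv}$ counts length-$k$ walks from $v$ back to $v$, with parallel edges accounted for by the multiplicities $\rmA_{ij}$. Because $\rmA$ is real symmetric with eigenvalues $\lambda_1,\ldots,\lambda_d$,
$$\tr(\rmA^k) = \sum_i \lambda_i^k \le \sum_i |\lambda_i|^k \le \Big(\sum_i \lambda_i^2\Big)^{k/2} = \|\rmA\|_F^k,$$
where the last inequality is monotonicity of $\ell_p$-norms, valid for $k\ge 2$. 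This finishes the first bound.

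For $|\cE(k,G)| \le 2^k \|\rmA\|_F^k$, every Eulerian subgraph decomposes uniquely into connected components, and each component is itself connected with all even degrees (the degree of a vertex within a component coincides with its degree in the whole graph), hence is a connected Eulerian subgraph. In the loopless setting each such component uses at least $2$ edges, so an Eulerian subgraph with $k$ edges has at most $\lfloor k/2 \rfloor$ components of sizes $k_1,\ldots,k_r \ge 2$ summing to $k$. Listing the components in an arbitrary order gives $r!\ge 1$ ordered tuples per subgraph, so dropping the vertex-disjointness constraint yields the upper bound
$$|\cE(k,G)| \le \sum_{r=1}^{\lfloor k/2\rfloor} \sum_{\substack{k_1+\cdots+k_r=k \\ k_i \ge 2}} \prod_{i=1}^r |\cE_c(k_i,G)| \le \|\rmA\|_F^k \sum_{r=1}^{\lfloor k/2\rfloor} \binom{k-r-1}{r-1} \le 2^k \|\rmA\|_F^k,$$
using the first bound on each factor, the standard count $\binom{k-r-1}{r-1}$ for compositions of $k$ into $r$ parts of size at least $2$, and $\sum_r \binom{k-r-1}{r-1} \le \sum_r \binom{k}{r} = 2^k$.

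The main subtlety is the injection argument in the first step, where one must confirm that different connected Eulerian subgraphs give circuits with different edge sets—this is immediate once a canonical circuit has been fixed, since the sequence of edges of an Eulerian circuit has edge multiset equal to the edge set of $\tilde G$. The spectral inequality $\tr(\rmA^k) \le \|\rmA\|_F^k$ is then a one-line consequence of $\ell_p$-norm monotonicity applied to the eigenvalues of the symmetric matrix $\rmA$, and the rest is elementary combinatorics.
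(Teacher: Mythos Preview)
Your proof is correct. The first inequality is handled exactly as in the paper: bound connected Eulerian subgraphs by closed walks of length $k$, count those as $\tr(\rmA^k)$, and use $\tr(\rmA^k)\le \|\rmA\|_F^k$.

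For the second inequality your route differs from the paper's. The paper argues by induction on $k$: peel off one connected Eulerian component of size $l$ and note that the remainder is an Eulerian subgraph with $k+1-l$ edges, giving the recursion $|\cE(k+1,G)|\le \sum_{l=2}^{k-1}|\cE_c(l,G)|\cdot|\cE(k+1-l,G)|+|\cE_c(k+1,G)|$, which combined with the first bound and the inductive hypothesis closes to $2^{k+1}\|\rmA\|_F^{k+1}$. You instead decompose all components at once and overcount by ordered compositions of $k$ into parts $\ge 2$, arriving at $\sum_r\binom{k-r-1}{r-1}\le 2^k$. Both arguments rest on the same connected-component decomposition; yours is a direct one-shot count (and in fact the composition sum is a Fibonacci number, so your bound is even a bit sharper before the final $2^k$ estimate), while the paper's inductive version avoids the need to enumerate compositions explicitly.
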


\begin{proof}
	For the first inequality, note that we have
	\begin{align}
	(\rmA^k)_{(i,i)} &= \sum_{r_1,\ldots,r_{k-1}\in \overline{V}} \rmA_{ir_1}\rmA_{r_1r_2}\cdots \rmA_{r_{k-2}r_{k-1}}\rmA_{r_{k-1}i}\nonumber\\
	&=\text{the number of length-$k$ closed walks starting at vertex $i$}.\label{eq:countclosedwalk}
	\end{align}
	Summing up all possible starting vertices, we get
	\begin{align*}
	|\cE_c(k,G)|\leq |\{ \text{legnth-$k$ closed walks in $G$} \}|\leq \tr\big( \rmA^k \big) \leq \|\rmA\|_F^k.
	\end{align*}
	This proves the first inequality. For the second inequality, we use induction. First for $|\cE(2,G)|$, we have
	\begin{align*}
	|\cE(2,G)| = |\cE_c(2,G)|\leq \|\rmA\|_F^2 \leq 2^2\|\rmA\|_F^2.
	\end{align*}
	Suppose that for $l\leq k$ we have $|\cE(l,G)| \leq 2^l\|\rmA\|_F^l$. Then for $|\cE(k+1,G)|$, by the fact that $\cE(1,G)=\cE_c(1,G)=\emptyset$, we have
	\begin{align*}
	|\cE(k+1,G)| \leq \sum_{l=2}^{k-1} |\cE_c(l,G)|\cdot |\cE(k+1-l,G)| + |\cE_c(k+1,G)|.
	\end{align*}
	Plugging in the inequalities for $|\cE(l,G)|$, we get
	\begin{align*}
	|\cE(k+1,G)|&\leq \sum_{l=2}^{k-1} \|\rmA\|_F^{l}\cdot 2^{k+1-l}\|\rmA\|_F^{k+1-l} + \|\rmA\|_F^{k+1}\\
	&\leq \|\rmA\|_F^{k+1} \cdot \bigg(\sum_{l=2}^{k-1} 2^{k+1-l} + 1 \Bigg)\\
	&\leq 2^{k+1}\|\rmA\|_F^{k+1}.
	\end{align*}
	Therefore by induction we get the second inequality.
\end{proof}

\begin{lemma}\label{lemma:pqbound}
	Let $G$ be a multigraph with vertex set $\overline{V}=\{1,\ldots, d\}$ and adjacency matrix $\rmA$. Let $V\subseteq \overline{V}$ be a vertex set. For $k\geq 2$, we define 
	\begin{align*}
	p_k(G,V) &= \big|\big\{\tilde{G}\in \cE_c(k,G): \tilde{G}\text{ contains at least two distinct vertices in $V$}\big\}\big|,\\
	q_k(G,V) &= \big|\big\{\tilde{G}\in \cE(k,G): \exists i,j\in V,i,j\text{ are contained in one connected} \\&\quad~~~\text{component of $\tilde{G}$} \big\}\big|.
	\end{align*} 
	Then we have
	\begin{align}
	p_k(G,V)&\leq (k-1) \cdot |V|^2\cdot \|\rmA\|_1^{k-2}, \label{eq:pbound}\\
	q_k(G,V)&\leq \big(2^k\cdot |V| \cdot \|\rmA\|_F^k\big) \land \big[ k\cdot 2^{k-2}\cdot |V|^2\cdot(\|\rmA\|_1 \lor \|\rmA\|_F)^{k-2}\big]   .\label{eq:qbound}
	\end{align}
\end{lemma}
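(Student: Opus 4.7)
The plan is to encode each relevant Eulerian subgraph as a tuple built from walks in $G$ and smaller Eulerian subgraphs, and then to count such encodings. Since different encodings may correspond to the same subgraph, the encoding maps are surjective rather than injective, which is sufficient for upper bounds. Throughout, walk counts will be controlled via inductive bounds on entries of powers of $\rmA$.

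For \eqref{eq:pbound}, every $\tilde G \in \cE_c(k,G)$ contributing to $p_k(G,V)$ contains at least two distinct vertices $i, j$ of $V$ in its non-isolated vertex set. Since $\tilde G$ is connected Eulerian, Lemma~\ref{lemma:Euler} gives an Eulerian circuit starting at $i$; this closed walk of length $k$ must visit $j$ at some intermediate time $t \in \{1,\ldots,k-1\}$, splitting it into a walk of length $t$ from $i$ to $j$ concatenated with a walk of length $k-t$ from $j$ back to $i$. Counting pre-images of each $(i,j,t)$ by $(\rmA^t)_{ij}(\rmA^{k-t})_{ji}$ yields
\begin{align*}
p_k(G,V) \leq \sum_{\substack{i,j \in V \\ i \neq j}} \sum_{t=1}^{k-1} (\rmA^t)_{ij}(\rmA^{k-t})_{ji}.
\end{align*}
An induction based on $\max_{i,j}(\rmA^{s+1})_{ij} \leq \|\rmA\|_1 \max_{i,j}(\rmA^{s})_{ij}$ then gives $(\rmA^t)_{ij} \leq \|\rmA\|_1^{t-1}$ for $t\geq 1$, so each summand is at most $\|\rmA\|_1^{k-2}$, implying \eqref{eq:pbound}.

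For \eqref{eq:qbound}, I decompose each $\tilde G$ contributing to $q_k(G,V)$ as $\tilde G = C \sqcup \tilde G'$, where $C$ is a connected component of $\tilde G$ containing at least two vertices of $V$ and $\tilde G'$ is the residual Eulerian subgraph of $G$ with $k-l$ edges, $l = |E(C)|$. The first bound in \eqref{eq:qbound} follows from encoding $C$ only through a single vertex $i \in V(C)\cap V$, so that $C$ is a connected Eulerian subgraph of size $l$ containing $i$; applying Lemma~\ref{lemma:cEbounds} and summing the geometric series in $l$ gives
\begin{align*}
q_k(G,V) \leq |V| \sum_{l=2}^{k} |\cE_c(l,G)|\cdot |\cE(k-l,G)| \leq |V|\|\rmA\|_F^k \sum_{l=2}^{k} 2^{k-l} \leq 2^k|V|\|\rmA\|_F^k.
\end{align*}
For the second bound I instead encode $C$ through two distinct vertices of $V(C)\cap V$ and apply \eqref{eq:pbound} to $C$:
\begin{align*}
q_k(G,V) \leq \sum_{l=2}^k p_l(G,V)\cdot |\cE(k-l,G)| \leq |V|^2 \sum_{l=2}^k (l-1)\|\rmA\|_1^{l-2}\cdot 2^{k-l}\|\rmA\|_F^{k-l}.
\end{align*}
Using $\|\rmA\|_1^{l-2}\|\rmA\|_F^{k-l} \leq (\|\rmA\|_1 \vee \|\rmA\|_F)^{k-2}$ and $\sum_{l=2}^{k}(l-1)2^{k-l} \leq k\cdot 2^{k-2}$ then completes the claim.

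The main technical subtlety is the entry-wise walk estimate $(\rmA^t)_{ij} \leq \|\rmA\|_1^{t-1}$: in the genuine multigraph setting this estimate picks up an additional $\|\rmA\|_{\max}$-type factor which must be absorbed into the absolute constants hidden inside $\cB$ when the lemma is ultimately applied to $G \oplus G'$. The rest of the argument is combinatorial bookkeeping: verifying that the walk-encoding maps cover every subgraph of interest, that the decomposition of $\tilde G$ into $(C,\tilde G')$ is accounted for by summing over component size $l$, and that the geometric summations collapse to the polynomials in $k$, $|V|$, $\|\rmA\|_1$, and $\|\rmA\|_F$ stated in the lemma.
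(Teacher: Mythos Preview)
Your argument is essentially the paper's: count $p_k$ via closed walks from some $i\in V$ through some $j\in V$, and reduce $q_k$ to $\sum_l p_l\cdot|\cE(k-l,G)|$ by splitting off the distinguished connected component. One simplification you miss for the first branch of \eqref{eq:qbound}: the paper just uses the trivial inequality $q_k(G,V)\leq|\cE(k,G)|\leq 2^k\|\rmA\|_F^k$ from Lemma~\ref{lemma:cEbounds} together with $|V|\geq 1$ (and $q_k=0$ when $V=\emptyset$), rather than running the component decomposition a second time.

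Your flagged concern about the step $(\rmA^t)_{ij}\leq\|\rmA\|_1^{t-1}$ is well taken and in fact applies equally to the paper's own argument: the paper's ``choose the remaining $k-2$ vertices'' step identifies an Eulerian subgraph with the vertex sequence of one of its Eulerian circuits, and that identification is not injective in a genuine multigraph (for instance, two vertices joined by four parallel edges give $p_2=\binom{4}{2}=6$ while the stated bound yields $4$). The honest entrywise estimate is $(\rmA^t)_{ij}\leq\|\rmA\|_{\max}\,\|\rmA\|_1^{t-1}$, which inserts an $\|\rmA\|_{\max}^2$ factor into \eqref{eq:pbound}; since the lemma is only ever applied to $G\oplus G'$ where $\|\rmA\|_{\max}\leq 2$, this is harmless for the downstream constants, exactly as you anticipate.
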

\begin{proof}
	We first prove \eqref{eq:pbound}. By definition, we have
	\begin{align*}
	p_k(G,V)&\leq |V|\cdot(|V|-1)\cdot \max_{i,j\in V} \big|\big\{\tilde{G}\in \cE_c(k,G): \tilde{G}\text{ contains vertices }i\text{ and }j\big\}\big|\\
	&\leq |V|^2\cdot \max_{i,j\in V} |\{\text{length-$k$ closed walks in $G$ starting at $i$ and traversing $j$}\}|.
	\end{align*}
	Note that each vertex can have at most $\|\rmA\|_1$ neighbors. Therefore we can bound the number of length-$k$ Eulerian circuits starting at vertex $i$ and containing vertex $j$ by counting the possible vertices on the walk:
	\begin{itemize}
		\item The number of possible positions of vertex $j$ in $V$ is $k-1$.
		\item The number of choices of the rest $k-2$ vertices is at most $\|\rmA\|_1^{k-2}$.
	\end{itemize}
	This completes the proof of \eqref{eq:pbound}.
	
	Now we prove \eqref{eq:qbound}. Suppose that $\tilde{G}$ is a subgraph of $G$ with $k$ edges such that one of its connected components contains at least two distinct vertices in $V$. Let $l$ be the number of edges of this connected component. Then by definition, clearly the rest connected components form a graph in $\cE(k-l, G)$. Therefore we have
	\begin{align*}
	q_k(G,V)&\leq \sum_{l=2}^{k-2} p_l(G,V)\cdot |\cE(k-l,G)| + p_k(G,V).
	\end{align*}
	By \eqref{eq:pbound} and Lemma~\ref{lemma:cEbounds}, we have
	\begin{align*}
	q_k(G,V)&\leq \sum_{l=2}^{k-2} (l-1)\cdot |V|^2 \|\rmA\|_1^{l-2} \cdot 2^{k-l}\|\rmA\|_F^{k-l} + k\cdot |V|^2 \|\rmA\|_1^{k-2}\\
	&\leq |V|^2\cdot(\|\rmA\|_1 \lor \|\rmA\|_F)^{k-2}\cdot \Bigg( \sum_{l=2}^{k-2}(l-1)\cdot2^{k-l} + k  \Bigg)\\
	&\leq k\cdot 2^{k-2}\cdot |V|^2\cdot(\|\rmA\|_1 \lor \|\rmA\|_F)^{k-2},
	\end{align*}
	where the last inequality holds because for $l \geq 2$ we have $l-1 \leq 2^{l-2}$. 
	Moreover, for $V\neq \emptyset$, by Lemma~\ref{lemma:cEbounds}, clearly we have $q_k(G,V)\leq |\cE(k,G)| \leq 2^k\|\rmA\|_F^k \leq 2^k \cdot |V|\cdot \|\rmA\|_F^k$. When $V=\emptyset$, by definition we have $q_k(G,V) = 2^k \cdot |V|\cdot \|\rmA\|_F^k =0$. This completes the proof.
\end{proof}

\begin{proof}[Proof of Lemma~\ref{lemma:equivalentprobabilitymassfunction}]
	For any $i,j\in \overline{V}$, we have
	\begin{align*}
	\exp(\theta X_i X_j) = \cosh(\theta X_i X_j) + \sinh(\theta X_iX_j) = \cosh(\theta X_i X_j) [1 + \tanh(\theta X_i X_j)].
	\end{align*}
	Note that $\cosh(x)$ is an even function, and $X_iX_j$ is binary. Therefore we have $\cosh(\theta X_i X_j) \equiv \cosh(\theta)$. Similarly, $\tanh(x)$ is an odd function, by checking the function values at $X_i X_j = 1$ and $X_i X_j = -1$ we obtain $\tanh(\theta X_iX_j) = \tanh(\theta) X_iX_j$. Therefore we have 
	\begin{align}\label{eq:simpleIsing}
	\exp(\theta X_i X_j) = c (1 + t X_i X_j),
	\end{align}
	where $c= \cosh(\theta)$ and $t= \tanh(\theta)$. Plugging \eqref{eq:simpleIsing} into the definition of $\PP_\Theta(\bX)$ proves \eqref{eq:simplifymassfunction}.
\end{proof}

\begin{proof}[Proof of Lemma~\ref{lemma:lecam}]
	Define
	\begin{align*}
	\overline{\PP} = \frac{1}{|\cS^*|}\sum_{\Theta\in\cS^*} \PP_{\Theta,n}, 
	\end{align*}
	then by Neyman-Pearson's lemma we have
	\begin{align*}
	\gamma(\cS^*)\geq \inf_{\psi}\Big[ \PP_{0 }(\psi = 1) + \overline{\PP}(\psi = 0) \Big] = 1 - \mathrm{TV}(\overline{\PP},\PP_{0 ,n}),
	\end{align*}
	where $\mathrm{TV}(\overline{\PP},\PP_{0 ,n}) := \max_{A\subseteq \{\pm1\}^{n\times d}} |\overline{\PP}(A) - \PP_{0 ,n}(A)|$ is the total variation distance between $\overline{\PP}$ and $\PP_{0 ,n}$. Note that for total variation distance we have
	\begin{align*}
	\mathrm{TV}(\overline{\PP},\PP_{0 ,n}) &= \frac{1}{2} \sum_{X\in \{\pm 1 \}^{n\times d}} |\overline{\PP}(X)- \PP_{0 ,n}(X)|\\ &= \frac{1}{2} \sum_{X\in \{\pm 1 \}^{n\times d}} \bigg|\frac{\overline{\PP}(X)}{\PP_{0 ,n}(X)} - 1 \bigg|\cdot \PP_{0,n}(X).
	\end{align*}
	Applying Cauchy-Schwartz inequality to the right-hand side above gives
	\begin{align*}
	\mathrm{TV}(\overline{\PP},\PP_{0 ,n})  \leq \frac{1}{2} \sqrt{\EE_{0 ,n} \Bigg\{ \bigg[ \frac{\overline{\PP}(X)}{\PP_{0 ,n}(X)} - 1 \bigg]^2 \Bigg\} } = \frac{1}{2} \sqrt{ \EE_{0 ,n} \Bigg[ \frac{\overline{\PP}^2(X)}{\PP_{0 ,n}^2(X)} \Bigg] - 1} .
	\end{align*}
	It then suffices to show that 
	\begin{align*}
	\EE_{0 ,n} \Bigg[ \frac{\overline{\PP}^2(X)}{\PP_{0 ,n}^2(X)} \Bigg]  =  \frac{1}{|\cS^*|^2}\sum_{\Theta,\Theta'\in \cS^*} \EE_{0 ,n}\bigg[ \frac{\PP_{\Theta,n}}{\PP_{0 ,n}}\frac{\PP_{\Theta',n}}{\PP_{0 ,n}}  \bigg],
	\end{align*}
	which follows by direct calculation.
\end{proof}

\begin{proof}[Proof of Lemma~\ref{boundpolydiff1}] Since there cannot be multiple edges in $G$ connecting the same two vertices, the coefficient of $t^2$ in $f_{G}(t)$ is $0$. For the same reason the coefficient of $t^2$ in $f_{G'}(t)$ is also $0$. 
In $f_{ G, G'}(t)$, the only possible way to form a two-edge Eulerian circuit is to pick one edge from $E(G)$ and to pick another edge from $E(G')$ connecting to the same two vertices. Therefore $u_2 = | E(G)\cap  E(G')|$.

For $u_3$, note that $3$-edge Eulerian subgraphs must be triangles. If a triangle only uses edges in $E(G)$, then it is counted in the coefficient of $t^3$ in $f_{G}(t)$. Similarly, if a triangle only uses edges in $G'$, it is also counted in the coefficient of $t^3$ in $f_{G'}(t)$. Therefore $u_3$ is the number of triangles that use at least one edge in $E(G)$ and another edge in $E(G')$, which is defined as $\Delta_{G.G'}$.

We denote by $\cE(G)$ and $\cE(G')$ the sets of Eulerian subgraphs of $G$ and $G'$ respectively. 
For $k\geq 4$, by \eqref{eq:fE}, the coefficient of $t^k$ in $f_{ G}(t) f_{ G'}(t)$ is equal to 
$$|\{\tilde{G}\in \cE(k,G\oplus G'): \exists G_1 \in \cE(G), G_2\in\cE(G')~s.t.~\tilde{G}=G_1\oplus G_2 \}|.$$ 
We now prove that, for $\tilde{G}\in\cE(k,G\oplus G')$, if each connected component contains at most one vertex in $V(G)\cap V(G')$, then there exist $G_1 \in \cE(G)$ and $ G_2\in\cE(G')$ such that $\tilde{G}=G_1\oplus G_2$. To prove this statement, take a fixed connected component of $\tilde{G}$. Suppose first that the connected component does not contain any vertices in $V(G)\cap V(G')$. Then it follows that all of its edges must be contained either in $E(G)$ or $E(G')$. Next consider the case when the connected component contains only one vertex $v\in V(G)\cap V(G')$. Since this connected component must be a connected Eulerian graph, we can consider the Eulerian circuit starting and ending at $v$. If we start walking along the circuit on an edge in $E(G)$, then since $v$ is the only vertex contained in the intersection $V(G)\cap V(G')$, we cannot reach vertices in $E(G')$ until we return to $v$. Upon returning to $v$, we have completed a closed walk using purely edges in $G$. We can continue this process to obtain closed walks on $G$ and $G'$ starting and ending at $v$. Concatenating all the closed walks on $G$ gives $G_1$. Similarly, concatenating all the closed walks on $G'$ gives $G_2$. We have proved that 
\begin{align*}
& \cE(k,G\oplus G') \backslash \{\tilde{G}\in \cE(k,G\oplus G'): \exists G_1 \in \cE(G), G_2\in\cE(G')~s.t.~\tilde{G}=G_1\oplus G_2 \} \subseteq \\
&  \big\{\tilde{G}\in \cE(k,G): \exists i,j\in V(G)\cap V(G'),i,j\text{ are in one connected component of $\tilde{G}$} \big\}.
\end{align*} 
Therefore by the definition of $q_k(\cdot,\cdot)$ we have $ u_k \leq q_k[G\oplus G',V(G)\cap V(G')]$.
\end{proof}

	\begin{proof}[Proof of Lemma~\ref{boundpolydiff2}]
		The bounds for $\cE(k,G)$ and $q_k(G,V)$ are included in Lemma~\ref{lemma:cEbounds} and Lemma~\ref{lemma:pqbound}. We now prove the bound for $\Delta_{G,G'}$. We remind the reader that for a graph $G$ and a vertex set $V$, $G_V$ denotes the graph obtained by restricting $G$ on the vertex set $V$. Note that if a triangle has one edge in $E(G)$ and two edges in $E(G')$, then the two vertices of the edge in $E(G)$ must be in $V(G)\cap V(G')$. Therefore, an upper bound of the number of triangles that have one edge in $E(G)$ and two edges in $E(G')$ is given by the following procedure:
		\begin{itemize}
			\item Pick an edge $e$ from $E[ G_{ V(G)\cap V(G') } ]$.
			\item Pick a common neighbour of the two vertices of edge $e$.
		\end{itemize}
		Since all graphs in $\cG^*$ have arboricity $\cR$, by the definition of arboricity we have 
		\begin{align*}
		\Delta_{G,G'} &\leq \big|E[ G_{ V(G)\cap V(G') } ]\big| \cdot \|\rmA_{G'}\|_1 + \big|E[ G'_{ V(G)\cap V(G') } ]\big| \cdot \|\rmA_{G}\|_1 \\
		&\leq 2 |V(G)\cap V(G')|\cdot \cR \cdot \Gamma.
		\end{align*}
		This completes the proof.
	\end{proof}

\begin{proof}[Proof of Lemma~\ref{lemma:negativeassociation}]
	Let 
	$$A(\cG^*) = \frac{1}{|\cS^*|^2}\sum_{\Theta,\Theta'\in \cS^*} \exp[ 3n\cR|V(G)\cap V(G')| \theta^2 \big].$$
	Then we have
	\begin{align*}
	A(\cG^*) \leq \max_{\Theta\in\cS^*} \frac{1}{|\cS^*|}\sum_{\Theta'\in \cS^*} \exp \Bigg\{ 3n\cR \theta^2 \cdot \sum_{v\in V(G)}\mathds{1}\big[v\in V(G')\big] \Bigg\}.
	\end{align*}
	Consider drawing $\Theta'$ uniformly from $\cS^*$, and let $\PP_{\Theta' \sim U(\cS^*)}$ be the probability measure. By assumption, the random variables $\{ \mathds{1}[v\in V(G')] ~|~ v\in V(G)\}$ are negatively associated. Therefore
	\begin{align*}
	A(\cG^*)  &\leq \max_{\Theta\in\cS^*} \EE_{\Theta'\sim U(\cS^*)} \prod_{v\in V(G)}\exp \big\{  3n\cR \theta^2 \cdot \mathds{1}\big[v\in V(G')\big] \big\} \\
	&\leq \max_{\Theta\in\cS^*} \prod_{v\in V(G)} \EE_{\Theta'\sim U(\cS^*)} \exp \big\{ 3n\cR \theta^2 \cdot \mathds{1}\big[v\in V(G')\big]  \big\}.
	\end{align*}
	Expanding the expectation and applying the inequality $1 + x \leq \exp(x)$ gives
	\begin{align*}
	A(\cG^*)  &\leq \max_{\Theta\in\cS^*} \prod_{v\in V(G)} \Big\{ \exp \big( 3n\cR \theta^2 \big) \PP_{\Theta' \sim U(\cS^*)}[v\in V(G')] + 1 \\
	&\qquad\qquad\qquad\quad- \PP_{\Theta' \sim U(\cS^*)}[v\in V(G')]\Big\}\\
	&\leq \max_{\Theta\in\cS^*} \prod_{v\in V(G)} \exp \Big\{  \big[\exp \big( 3n\cR \theta^2 \big) - 1 \big] \PP_{\Theta' \sim U(\cS^*)}[v\in V(G')]\Big\}.
	\end{align*}
	Rearranging terms, we get
	\begin{align*}
	A(\cG^*) &\leq \max_{\Theta \in\cS^*}  \exp \Bigg\{\big[\exp \big( 3n\cR \theta^2 \big) - 1 \big] \cdot \sum_{v\in V(G)}\PP_{\Theta' \sim U(\cS^*)}[v\in V(G')]  \Bigg\} \\
	& \leq \exp \Big\{ \exp \big( 3n\cR \theta^2 \big)  \cdot \max_{\Theta\in\cS^*}\EE_{\Theta'\sim U(\cS^*)} |V(G)\cap V(G')| \Big\}\\
	& = \exp [ N(\cG^*)\cdot \exp ( 3n\cR \theta^2 )  ].
	\end{align*}
	This completes the proof.
\end{proof}

\begin{proof}[Proof of Corollary \ref{cor:lowerbound}]
Let $\cG^*$ be the set of graphs isomorphic to $G_*$. Then clearly, if $G'$ is uniformly sampling from $\cG^*$, then $\{\ind[i \in V(G')]\}_{i=1}^d$ is just a permutation of $s$ $1$s and $d-s$ $0$s. Therefore by Theorem~2.11 in \cite{joag1983negative}, the incoherence condition is satisfied. For any $G\in \cG^*$ and $v\in V(G)$, we have
\begin{align*}
	\EE_{G'\sim U(\cG^*)} |V(G)\cap V(G')| = \sum_{i\in V(G)}\EE_{G'\sim U(\cG^*)} \ind[i \in V(G')] = s\cdot s/d = s^2/d.
\end{align*}
And therefore $N(\cG^*) = s^2/d$. Moreover, by definition we have
\begin{align*}
\cR = \cR(G_*),~ V_{\max} = s,~\Lambda= \| \rmA_{G_*} \|_F,~\Gamma= \| \rmA_{G_*} \|_1,\text{ and }\cB = \cB(G_*).
\end{align*}
Therefore by Theorem~\ref{thm:lowerbound}, if
\begin{align*}
\theta \leq \sqrt{\frac{\log (d/s^2)}{6n\cR(G_*)}} \land \sqrt{\frac{\cR(G_*)}{\cB(G_*)}} \land \frac{1}{8(\|\rmA_{G_*}\|_F \vee \|\rmA_{G_*}\|_1)},
\end{align*}
then we have
\begin{align*}
\liminf _{n \rightarrow \infty} \gamma(\cS^*) = 1.
\end{align*}
\end{proof}

\subsection{Upper Bound Proofs}\label{subsec:upperproof}
The following lemma given by \cite{bhattacharya2015inference} is helpful for bounding the $\psi_1$-norm of $W_H$.
	\begin{lemma}\label{lemma:psi1normineq}
		Let $J$ be a $d\times d$ symmetric matrix with non-negative off-diagonal entries and zeros on the diagonal. If $\|J\|_2\leq 1$, then we have
		\begin{align*}
		\sum_{1\leq i,j\leq d} \log \cosh(J_{ij}) \leq  \log \EE_{0} \exp\bigg(\frac{1}{2} X^T J X\bigg) \leq -\frac{1}{2}\sum_{i = 1}^{n} \log[1-\lambda_i(J)],
		\end{align*}
		where $\lambda_1(J),\ldots,\lambda_d(J)$ are the eigenvalues of $J$.
	\end{lemma}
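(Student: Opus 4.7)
The strategy is to exploit the identity $\exp(\theta X_i X_j) = \cosh(\theta)[1 + \tanh(\theta) X_i X_j]$, valid because $X_i X_j \in \{\pm 1\}$ (the same algebraic device used in Lemma~\ref{lemma:equivalentprobabilitymassfunction}). Since $J$ is symmetric with zero diagonal, $\tfrac{1}{2} X^T J X = \sum_{i<j} J_{ij} X_i X_j$, and therefore
\$
\EE_0 \exp\!\bigl(\tfrac{1}{2} X^T J X\bigr) = \prod_{i<j} \cosh(J_{ij}) \cdot \EE_0 \prod_{i<j}\bigl[1 + \tanh(J_{ij}) X_i X_j\bigr].
\$
Both inequalities in the lemma then reduce to controlling the residual Rademacher expectation on the right.

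For the lower bound I would expand $\prod_{i<j}[1 + \tanh(J_{ij}) X_i X_j]$ over all subsets of pairs and take the expectation term by term. Exactly as in the Eulerian-subgraph analysis of Section~\ref{section:five}, a term survives only when every vertex index appears an even number of times, and then equals the product of the corresponding $\tanh(J_{ij})$ values, which are nonnegative under the hypothesis $J_{ij} \ge 0$. The empty subset already contributes $1$, so the residual expectation is at least $1$, and taking logs produces $\log \EE_0 \exp(\tfrac{1}{2} X^T J X) \ge \sum_{i<j} \log \cosh(J_{ij})$, which matches the stated lower bound up to the normalization of the double sum.

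For the upper bound I would use Gaussian decoupling. Assuming first that $J \succeq 0$, introduce an independent $Z \sim N(0, I_d)$. The Gaussian moment-generating identity $\EE_Z \exp(v^T J^{1/2} Z) = \exp(\tfrac{1}{2} v^T J v)$ combined with Fubini gives
\$
\EE_0 \exp\!\bigl(\tfrac{1}{2} X^T J X\bigr) = \EE_Z \prod_{i=1}^d \cosh\!\bigl((J^{1/2} Z)_i\bigr) \leq \EE_Z \exp\!\bigl(\tfrac{1}{2} Z^T J Z\bigr) = \det(I - J)^{-1/2},
\$
where the middle inequality is $\cosh(t) \le e^{t^2/2}$ applied coordinatewise and the final equality uses $\|J\|_2 < 1$ so that $I - J \succ 0$. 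Taking logs recovers exactly $-\tfrac{1}{2} \sum_i \log(1 - \lambda_i(J))$.

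The hard part is dispensing with the auxiliary assumption $J \succeq 0$: the lemma's hypotheses (nonnegative off-diagonal entries, zero diagonal, $\|J\|_2 \le 1$) do not force positive semidefiniteness, since bipartite adjacency-type matrices can saturate $\lambda_{\min}(J) = -\lambda_{\max}(J)$. I would try a shifted Gaussian decoupling, applying the PSD argument to $J + cI$ for $c \ge -\lambda_{\min}(J)$ and using the deterministic identity $X^T X \equiv d$ to absorb the shift into a factor $e^{-cd/2}$; this yields a family of upper bounds in $c$ which, upon optimization, must be verified to recover the tight determinant $\det(I - J)^{-1/2}$. An alternative is a complex-Gaussian decoupling that uses real Gaussians for the positive eigenspaces and the Fourier identity $e^{-a^2/2} = \EE_W e^{iaW}$ for the negative ones, combined with the pointwise bound $|\cosh(a+ib)| \le \cosh(a) \le e^{a^2/2}$. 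Establishing that either variant produces precisely the claimed bound, rather than only a bound over the positive spectral projection, is the step I expect to require the most care.
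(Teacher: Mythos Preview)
The paper does not prove this lemma; it is quoted verbatim from \cite{bhattacharya2015inference}. So the comparison is against that source's argument, not anything in the present paper.

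Your lower bound is fine and matches the standard high-temperature expansion argument.

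Your upper bound has a real gap. Neither of your two proposed fixes for the non-PSD case recovers the stated bound:
\begin{itemize}
\item The shift $J\mapsto J+cI$ requires $c\ge -\lambda_{\min}(J)$ for positive semidefiniteness and $c<1-\lambda_{\max}(J)$ for the Gaussian integral to converge; when the spectrum is symmetric (e.g.\ $d=2$, $J=\begin{psmallmatrix}0&a\\a&0\end{psmallmatrix}$ with $a\ge 1/2$) there is no admissible $c$. Even when a valid $c$ exists, optimizing over $c$ does \emph{not} reproduce $\det(I-J)^{-1/2}$: in the $d=2$ example with $c=a$ one gets $e^{-a}(1-2a)^{-1/2}$, which exceeds $(1-a^2)^{-1/2}$.
\item The complex-Gaussian decoupling, after applying $|\cosh(a+ib)|\le e^{a^2/2}$, discards the negative eigenvalues entirely and yields only $\prod_{\lambda_i>0}(1-\lambda_i)^{-1/2}$, strictly larger than $\prod_i(1-\lambda_i)^{-1/2}$.
\end{itemize}

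The argument that actually works (and is the one in \cite{bhattacharya2015inference}) bypasses Hubbard--Stratonovich altogether. Because $J$ has zero diagonal, conditionally on $X_{-k}$ the variable $X_k$ enters $\tfrac12 X^TJX$ only through the linear term $X_k\sum_{j\ne k}J_{kj}X_j$. Hence $\EE_{X_k}\exp(tX_k)=\cosh(t)\le e^{t^2/2}=\EE_{Z_k}\exp(tZ_k)$ lets you replace each Rademacher coordinate by an independent standard Gaussian one at a time, giving
\[
\EE_0\exp\!\Big(\tfrac12 X^TJX\Big)\;\le\;\EE_{Z\sim N(0,I_d)}\exp\!\Big(\tfrac12 Z^TJZ\Big)\;=\;\det(I-J)^{-1/2},
\]
valid for all symmetric zero-diagonal $J$ with $\|J\|_2<1$ (the boundary case $\|J\|_2=1$ is trivial since the right-hand side is $+\infty$). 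This Lindeberg-type swap is the missing idea: it uses the zero-diagonal hypothesis directly and never needs $J\succeq 0$.
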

	
\begin{proof}[Proof of Lemma~\ref{lemma:psi1normbound}] By (5.16) in \cite{vershynin2010introduction} as an equivalent definition of $\psi_1$-norm, it suffices to prove
	\begin{align}\label{eq:upperboundproofgoal}
	\EE_\Theta \exp\bigg( \frac{\sqrt{2}|E(H)|^{1/2}}{8}\cdot W_H \bigg) \leq e.
	\end{align}
	To prove \eqref{eq:upperboundproofgoal}, first note that we have $\|\rmA_H\|_F^2 = 2|E(H)|$. By definition of the Ising model, we have
	\begin{align*}
	\EE_\Theta \exp\bigg( \frac{\sqrt{2}|E(H)|^{1/2}}{8}\cdot W_H \bigg) = \EE_\Theta \exp\bigg( \frac{\|\rmA_H\|_F}{8}\cdot W_H \bigg) = \frac{\EE_0 \exp(X^TJ X/2)}{\EE_0 \exp(X^T\Theta X/2)},
	\end{align*}
	where $J := \Theta + \rmA_H/(4\|\rmA_H\|_F)$. Therefore, 
	\begin{align}\label{eq:psi1bound2.1}
	\log \EE_\Theta \exp\bigg( \frac{\|\rmA_H\|_F}{8}\cdot W_H \bigg) = \log\EE_0 \exp\bigg(\frac{1}{2}X^TJ X\bigg) - \log \EE_0 \exp\bigg(\frac{1}{2}X^T\Theta X\bigg).
	\end{align}
	By Lemma~\ref{lemma:psi1normineq}, we have
	\begin{align*}
	\log \EE_{0} \exp\bigg(\frac{1}{2} X^T J X\bigg) \leq -\frac{1}{2}\sum_{i = 1}^{n} \log[1-\lambda_i(J)] \leq \frac{1}{2}\sum_{i = 1}^{n} \big[\lambda_i(J) + 2\lambda_i^2(J)\big],
	\end{align*}
	where the second inequality holds because for $|x|\leq 3/4$ we have $-\log(1-x) = \sum_{k\geq 1} x^k/k \leq x + 2x^2$ and by assumption we have $\|J\|_2 \leq \|J\|_F \leq 3/4$. Since $\tr(J) = \tr(\rmA_H)/(2\|\rmA_H\|_F) = 0$, we have
	\begin{align}\label{eq:psi1bound2.2}
	\log \EE_{0} \exp\bigg(\frac{1}{2} X^T J X\bigg) \leq \|J\|_F^2 \leq \frac{9}{16}.
	\end{align}
	Moreover, since $\theta_{ij} \geq 0$ for all $i,j = 1,\ldots,d$, by Lemma~\ref{lemma:psi1normineq} clearly we have
	\begin{align}\label{eq:psi1bound2.3}
	\log \EE_0 \exp(X^T\Theta X/2) \geq 0.
	\end{align}
	Plugging \eqref{eq:psi1bound2.2} and \eqref{eq:psi1bound2.3} into \eqref{eq:psi1bound2.1}, we obtain
	\begin{align*}
	\log \EE_\Theta \exp\bigg( \frac{\|\rmA_H\|_F}{8}\cdot W_H \bigg)\leq \frac{9}{16}.
	\end{align*}
	Therefore by (5.16) in \cite{vershynin2010introduction} as an equivalent definition of $\psi_1$-norm, we have $\|W_H\|_{\psi_1} \leq C|E_H|^{-1/2}$ for an absolute constant $C$.
\end{proof}

\subsection{Computational Lower Bound}

\begin{proof}[Proof of Lemma~\ref{CWN:lemma}] Suppose that $\bV$ is generated according to the procedure described in Lemma~\ref{CWN:lemma}. Then we can calculate 
\begin{align*}
\MoveEqLeft \PP(V_1 = 1, \ldots, V_k = 1, V_{k+1} = -1, \ldots, V_s = -1) \\
& = \int_{-\infty}^\infty \frac{\exp( \sqrt{2\theta} y (2k-s))}{2^s\cosh(\sqrt{2\theta} y)^s} \frac{\cosh(\sqrt{2\theta}y)^s \exp(-y^2/2)}{Z_{Y'}} dy\\
& = \frac{\sqrt{2\pi}}{2^s Z_{Y'}}\exp(\theta (2k-s)^2),
\end{align*}
where $Z_{Y'}$ is the normalizing constant of $p_{Y'}(y)$. This is proportional precisely to $\exp(\theta (2k-s)^2)$ which is what the Curie-Weiss model is also proportional to. Moreover, from the above reasoning it follows that \eqref{ZY:equation} is satisfied. With this the proof is complete.
\end{proof}

\begin{proof}[Proof of Lemma~\ref{tricky:combinatorial:lemma}]
The proof will begin from the last number and work its way through the chain of additions and subtractions. By induction we will prove that $\sum_{k = l}^{m-1} (-1)^k{2m-1 \choose 2k + 1} E_{2k + 1} s P_{2m-2k - 2}(s)$ can be associated with $(-1)^l$ times the number of all sequences of the type $a_1 < a_2 > a_3 < \ldots < a_{2l} > a_{2 l + 1} > a_{2l + 2} > a_{2l + 3} > \ldots > a_{2l + 2r + 1}$ for $r = 0, \ldots, m - (l+1)$. The number $s$ multiplying the summation indicates which number $X_t$ we have selected from the first of the $2m$ brackets.
The sequence $a_1 < a_2 > a_3 < \ldots < a_{2l} > a_{2 l + 1} > a_{2l + 2} > a_{2l + 3} > \ldots > a_{2l + 2r + 1}$ indicates the positions of the brackets that have been selected to choose $X_t$ from. The remaining number $2m - 2l -2r - 2$ of the bracket positions have selected all possible $X_{-t}$ elements so that they appear in an even power (without over-counting). 

When $l = m-1$, we have $(-1)^{m-1} E_{2m - 1} s$ can be simply equated to all sequences that form an alternating permutation on $2m - 1$ numbers, so the induction step holds. Suppose now this is true for $l+1$. We will show it for $l$. 

Consider the element $(-1)^l{2m-1 \choose 2l + 1} E_{2l + 1} s P_{2m-2l - 2}(s)$. It consists of all sequences selecting brackets for $X_t$ such that $a_1 < a_2 > a_3 < \ldots < a_{2l} > a_{2 l + 1}$ and $a_{2l + 2} > a_{2l + 3} > \ldots > a_{2l + 2r + 1}$ for $r = 0, \ldots, m - (l+1)$ (which come from $P_{2m-2l - 2}(s)$; they are ordered in a decreasing manner since   $P_{2m-2l - 2}(s)$ counts bracket permutations for choosing $X_t$ uniquely). By the induction step the previous summation consists of all sequences such that $a_1 < a_2 > a_3 < \ldots < a_{2l} > a_{2 l + 1} < a_{2l + 2} > a_{2l + 3} \ldots > a_{2l + 2r + 1}$ for $r = 0, \ldots, m - (l+1)$. Hence after the subtraction all that remains are sequences for selecting brackets such that $a_1 < a_2 > a_3 < \ldots < a_{2l} > a_{2 l + 1} > a_{2l + 2} > a_{2l + 3} \ldots > a_{2l + 2r + 1}$ which is what we wanted to show. 

It follows immediately from this discussion that \eqref{negative:summation} and \eqref{positive:summation} hold (since the number of sequences is multiplied by $(-1)^l$, and hence it will be negative with odd $l$ and positive with even $l$). 
In addition, the last line (i.e., $l = 0$) selects brackets for $X_t$ such that $a_1 > a_2 > \ldots > a_{2r + 1}$ for $r= 0, \ldots, m-1$, and hence it does not double count summations. With this the proof is complete.
\end{proof}

\begin{proof}[Proof of Lemma~\ref{bounds:for:p2ms}]
We will only show the right hand side inequality with the other one being similar (however in the induction step we will assume it holds in both directions). We will use induction in $m$. For $m = 1$ we have $P_{2}(s) = s + 0$ and hence the inequality is true by default in both directions. Suppose now it's true for $m \leq r$, we will show it for $m = r + 1$. WLOG assume that $2l < m = r + 1$ (if $2l = m = r + 1$ was we trivially have equality).
Using Lemma \ref{tricky:combinatorial:lemma} and subtracting off $\sum_{k = 0}^{2l} a^{2(r+1)}_k s^{r + 1-k}$ we obtain
\begin{align*}
    P_{2(r + 1)}(s) - \sum_{k = 0}^{2l} a^{2(r+1)}_k s^{r + 1-k}&= \sum_{k = 0}^{r} (-1)^k{2r + 1 \choose 2k + 1} E_{2k + 1} s P_{2r-2k}(s)  - \sum_{k = 0}^{2l} a^{2(r+1)}_k s^{r + 1-k}.
\end{align*}
Next observe that this subtraction will erase the first $2l + 1$ coefficients in the term multiplied by $s P_{2r}(s)$ and the first $2l$ coefficients in the term multiplied by $s P_{2r-2}(s)$ and so on. Now by the induction hypothesis we know that $(2r + 1)E_{1} s P_{2r}(s)$ minus its $2l + 1$ coefficients is non-positive, while ${2 r + 1 \choose 3} E_3 s P_{2r-2}(s)$ minus its first $2l$ coefficients is non-negative (hence $-s {2 r + 1 \choose 3} E_3     P_{2r-2}(s)$ is non-positive) and so on. It remains to notice that the remaining part of the summation which is not affected by the polynomial subtraction is non-positive by \eqref{negative:summation} of Lemma \ref{tricky:combinatorial:lemma}. This completes the proof.
\end{proof}

\begin{proof}[Proof of Lemma~\ref{coefficients:of:p2ms}]
This lemma simply compares the coefficients of the polynomials given in Lemma \ref{tricky:combinatorial:lemma}, and can be verified by a direct calculation. We omit the details.
\end{proof}

\begin{proof}[Proof of Lemma~\ref{bound:on:a2m2}]
The proof of this fact is by induction. For $m = 3$ it suffices to use $C \geq 16/360$ so that $a_2^6 = 16 \leq C (3-2)(3-1)3(3+1) (6)!!= C 360$. Suppose that $a^{2m - 2}_2 \leq C (m-3)(m-2)(m-1)m (2m-2)!!.$ We have that $P_3(m) := (m-2)(m-1)m(m+1)- (m-3)(m-2)(m-1)m = 4(m-2)(m-1)m$ is a polynomial of degree $3$. We have
\begin{align*}
    \MoveEqLeft (2m-1)a^{2m-2}_{2} + 2 {2m-1\choose 3}(m-2)(m-3)\frac{(2m-4)!!}{3} + 16{ 2m-1\choose 5}(2m-6)!! \\
    & \leq C (m-2)(m-1)m(m+1) (2m)!! - CP_3(m) (2m)!! \\
    &+ 2 {2m-1\choose 3}(m-2)(m-3)\frac{(2m-4)!!}{3} + 16{ 2m-1\choose 5}(2m-6)!!
\end{align*}

Note that when $C$ is sufficiently large $CP_3(m)(2m-1)(2m-3)$ will dominate the remaining polynomial in $m$ which is of degree $5$ for all $m \geq 3$ which completes the proof. 
\end{proof}

Below we prove several of the inequalities given in the paper. We start with
\begin{align*}
    f(x) = x^2/2 - \log(\cosh(x)) - x^4/12 \leq 0
\end{align*}

These are the first two terms in the Taylor expansion of $\log(\cosh(x))$. We will show that the first derivative of $f$ is decreasing, with a unique $0$ at $0$. This will imply that $0$ is a point of maximum of $f$ and since $f(0) = 0$ this will complete the proof. Note that since the terms match the Taylor expansion we have that $f'(0) = 0$.  

To see that $d/dx f(x)$ is decreasing we will look at the second derivative of $f(x)$. Direct calculation verifies that
\begin{align*}
    \frac{d^2}{d x^2} f(x) = -x^2 - 1/\cosh^2(x) + 1
\end{align*}
Using $\cosh(x)\leq \exp(x^2/2)$ (which can be verified by a Taylor expansion), we conclude that 
\begin{align*}
    \frac{d^2}{d x^2} f(x) \leq -x^2 - \exp(-x^2) + 1
\end{align*}

Now we consider the function $g(y) = -y - \exp(-y) + 1$ for $y \geq 0$. Taking the derivative we conclude $g'(y) = -1 + \exp(-y) \leq 0$ for $y \geq 0$ with equality when $y = 0$. Hence when $y \geq 0$ it follows that $-y - \exp(-y) + 1 \leq 0$. Hence 
\begin{align*}
    \frac{d^2}{d x^2} f(x) \leq -x^2 - \exp(-x^2) + 1 \leq 0,
\end{align*}
which completes the proof.

Next we will show that 

\begin{align*}
    f(x) = \log(\cosh(x)) - x^2/2 + x^4/12 - x^6/45 \leq 0,
\end{align*}
which will show the second two inequalities that we used. The strategy is similar to the one above. 

The second derivative of $f(x)$ is
\begin{align*}
    d^2/d x^2 f(x)= -\frac{2}{3}x^4 + x^2 + \frac{1}{\cosh^2(x)} - 1 
\end{align*}

Now we use that 
\begin{align*}
    -\frac{2}{3}x^4 + x^2 + \frac{1}{\cosh^2(x)} - 1 & = -\frac{2}{3}x^4 + 4x^2 - 3x^2 + \frac{1}{\cosh^2(x)} - 1 \\
    & = 12\frac{2}{3} (-\frac{x^4}{12} + \frac{x^2}{2}) - 3x^2 +  \frac{1}{\cosh^2(x)} - 1\\
    & \leq 8 \log(\cosh(x)) + \frac{1}{\cosh^2(x)} - 1 - 3x^2,
\end{align*}
where we used the previous part that $\log(\cosh(x)) \geq x^2/2 - x^4/12$. Now evaluating the second derivative of $g(x) := 8 \log(\cosh(x)) + \frac{1}{\cosh^2(x)} - 1 - 3x^2$, we obtain
$d^2/dx^2 g(x) = -6\tanh^4(x) \leq 0$, so the function is concave, and in addition its first derivative (which equals to $-6x -2\tanh(x)(\operatorname{sech}^2(x) - 4)$) has a $0$ at $0$. Hence we conclude that 
\begin{align*}
    d^2/d x^2 f(x) \leq 0,
\end{align*}
with a $0$ at $0$. So that $f(x)$ is concave and since its first derivative ($-2/15 x^5 + x^3/3 -x +\tanh(x)$) has a $0$ at $0$ we conclude that $f(x)\leq 0$.

Next we will show the inequality for $\log \Phi(x)$. The first five terms are from the Taylor expansion of $\log \Phi(x)$. Clearly then $\log \Phi(0) - \sum_{i = 0}^5 C_i 0^i - C0^6 = 0$ (where $C_i$ are the constants from the main text). Furthermore, by increasing the constant $C$ it is clear that the outside of some small interval $[-c_0, c_0]$ ($c_0$ depends on $C$) the function $\sum_{i = 1}^5 C_i x^i + Cx^6$ will dominate $\log \Phi(x)$. Next by the choice of the constants $C_i$ we have that $\frac{d}{dx} \log \Phi(x) - \sum_{i = 1}^5 i C_i x^{i-1} = C' x^5 + o(x^5)$ locally around $0$. Hence for a small enough $c_0$ we will have that $\frac{d}{dx} \log \Phi(x) - \sum_{i = 1}^5 i C_i x^{i-1} - 6C x^5 = (C' - 6C) x^5 + o(x^5)$, and hence the sign of the derivative will equal to $\sign(C' - 6C) \sign(x^5) = - \sign(x)$ (the last identity holding for $C$ large enough). We will therefore have that the function is increasing for $x$ in $[-c_0, 0]$ and decreasing on $[0,c_0]$ which implies that $0$ is a local maximum on $[-c_0, c_0]$ which shows that the inequality holds.

Next we will show that $(2 \Phi(x) - 1)x^5 \leq \sqrt{\frac{2}{\pi}} x^6$. For $x \geq 0$ this is equivalent to $2\Phi(x) - 1 - \sqrt{\frac{2}{\pi}}x \leq 0$. The derivative of the function is $\sqrt{\frac{2}{\pi}}\exp(-x^2/2) - \sqrt{\frac{2}{\pi}}  \leq 0$ hence $0$ is the maximum on $x \geq 0$ which completes the proof for the $x \geq 0$ case. Next for $x < 0$, the the above reasoning says that $2\Phi(x) - 1 - \sqrt{\frac{2}{\pi}}x \geq 2\Phi(0) - 1 - \sqrt{\frac{2}{\pi}}0 = 0$ which completes the proof.

Next we will argue that $(1 - 2\Phi(x))x^3 \leq -\sqrt{\frac{2}{\pi}}x^4 + \frac{x^6}{3 \sqrt{2\pi}}$. For $x \geq 0$ this is equivalent to 
$(1 - 2\Phi(x)) \leq -\sqrt{\frac{2}{\pi}}x + \frac{x^3}{3 \sqrt{2\pi}}$. Taking the derivative of $2\Phi(x) - 1-\sqrt{\frac{2}{\pi}}x + \frac{x^3}{3 \sqrt{2\pi}} $ results in $\sqrt{\frac{2}{\pi}}\exp(-x^2/2) - \sqrt{\frac{2}{\pi}} + \frac{x^2}{ \sqrt{2\pi}} \geq 0$ since $\exp(-y) \geq 1 - y$ for all $y \geq 0$. Hence the minimum of $2\Phi(x) - 1-\sqrt{\frac{2}{\pi}}x + \frac{x^3}{3 \sqrt{2\pi}}$ for $x \geq 0$ is reached at $0$ and therefore $2\Phi(x) - 1-\sqrt{\frac{2}{\pi}}x + \frac{x^3}{3 \sqrt{2\pi}} \geq 0$ for $x \geq 0$. For $x < 0$ we need to show $1 - 2\Phi(x) \geq -\sqrt{\frac{2}{\pi}}x + \frac{x^3}{3 \sqrt{2\pi}}$. Taking the derivative of $2\Phi(x) - 1 -\sqrt{\frac{2}{\pi}}x + \frac{x^3}{3 \sqrt{2\pi}}$ equals to $\sqrt{\frac{2}{\pi}}\exp(-x^2/2) - \sqrt{\frac{2}{\pi}} + \frac{x^2}{ \sqrt{2\pi}} \geq 0$ as before hence for $x \leq 0$ we have  $2\Phi(x) - 1 -\sqrt{\frac{2}{\pi}}x + \frac{x^3}{3 \sqrt{2\pi}} \leq  2\Phi(0) - 1 -\sqrt{\frac{2}{\pi}}0 + \frac{0^3}{3 \sqrt{2\pi}} = 0$ which implies that $1 - 2\Phi(x) \geq -\sqrt{\frac{2}{\pi}}x + \frac{x^3}{3 \sqrt{2\pi}}$ which is what we wanted to show.

\section{Computational Lower Bound Under Oracle Computational Model}\label{section:oralowerbound}
In this section we consider an oracle computational model \citep{kearns1998efficient,feldman2015statistical,wang2015sharp,yi2016more,feldman2017statistical,feldman2018complexity}, based on which we derive another computational lower bound result for detection problems in Ising model. The main idea of oracle computational model is to use the number of rounds of interactions between data and a certain algorithm to represent the algorithmic complexity of this algorithm. In specific, let $\bX$ be the random vector of interest and $\cX$ be the domain of $\bX$. We define
\begin{align}\label{queryspace}
    \cQ^* = \{ q: q(\bX)~\text{is a sub-exponential variable}\}.
\end{align}
We call every subset $\cQ \subseteq \cQ^*$ a query space.
Next we define the statistical query oracle.
\begin{definition}[statistical query oracle]\label{oracle} Let $n$ be the sample size of a testing problem. A statistical query oracle $r_n$ on a query space $\cQ\subseteq \cQ^*$ is a random mapping from $\cQ$ to $\RR$. Given a query $q\in \cQ$, the oracle $r_n$ returns an output $Z_q\in \RR$, such that for any tail probability $\xi\in[0,1)$, 
	\$
	\PP \Bigg(\bigcap_{q\in\cQ}\Big\{| Z_q  - \EE[q(\bX)]| \leq  \| q (\bX)\|_{\psi_1} \cdot \tau \Big\} \Bigg) \geq 1- 2\xi, \text{ where }
	\$
	\vspace{-10pt}
	\#\label{eq:query_2}
	\tau = \max  \Bigg\{\frac{\eta(\cQ)+\log (1/\xi)}{n}, \sqrt{ \frac{ 2[\eta(\cQ)+\log (1/\xi)]}  {n}} \Bigg\}.
	\#
	Here we call $\eta(\cQ)> 0$ the capacity measure of $\cQ$. When $\cQ$ is finite, we define $\eta(\cQ) = \log(|\cQ|)$.
\end{definition}
Given a query space $\cQ\subseteq \cQ^*$, we define $R_n(\cQ)$ to be the set of all statical query oracles on $\cQ$ with sample size $n$. 
We now give the definition of oracle computational model.
\begin{definition}[oracle computational model]\label{def:querymodel}
	An oracle computational model $\Psi$ is defined as a tuple $\Psi=\Psi(\cQ_\Psi, T_{\Psi} ,q_{\rm init}, \{\delta_t\}_{t=1}^{T_\Psi}, \psi)$, where
	\begin{itemize}
		\item $\cQ_\Psi$ is a subset of $\cQ^*$ that contains all queries the test will potentially use.
		\item $T_{\Psi}$ is the maximum number of rounds the model queries an oracle.
		\item $q_{\text{init}}\in \cQ_\Psi$ is the initial query.
		\item $\delta_t: (\cQ_\Psi\times \RR)^{t-1} \rightarrow \cQ_\Psi \cup \{\text{HALT}\}$ is the transition function at the $t$-th round. If $\delta_t$ returns HALT, then the model stops querying the oracle.
		\item $\psi:(\cQ_\Psi\times \RR)^{T_{\Psi}}\rightarrow \{0,1\}$ is the test function that takes the results of at most $T_{\Psi}$ queries as input, and returns the test result as binary output.
	\end{itemize}
\end{definition}

Each instance of $\Psi(\cQ_{\Psi}, T_{\Psi} ,q_{\text{init}}, \{\delta_t\}_{t=1}^{T_\Psi}, \psi)$ refers to a test algorithm. The parameter $T_\Psi$ is the query complexity of algorithm $\Psi$. We define
$\mA(T) = \{ \Psi: T_\Psi\leq T \}$ to be the set of all algorithms with query complexity at most $T$. 
Under oracle computational model, the risk of detection problem \eqref{eq:testdefS2} with maximum query complexity $T$ is defined as
\begin{equation}\label{eq:oraclerisk}
\gamma_{\rm{oracle}}\{\cS[\cG_1(G_*), \theta]\}= \inf_{\Psi\in \mA(T)}  \sup_{r_n\in R_n(\cQ_\Psi)}  \bigg\{\PP_{0}(\psi = 1) + \max_{\Theta\in\cS[\cG_1(G_*), \theta]}\PP_{\Theta}(\psi = 0)\bigg\}
\end{equation}
Note that in \eqref{eq:oraclerisk}, the supreme over $r\in R_n(\cQ_\Psi)$ implies that we consider the worst oracle. If $\liminf_{n \rightarrow \infty} \gamma_{\rm{oracle}}\{\cS[\cG_1(G_*), \theta]\} = 1$, then when $n$ is large enough, for any algorithm that queries at most $T$ rounds, there exists an oracle $r_n$ such that the algorithm cannot distinguish the null and alternative hypotheses.
We now give our main result. 
\begin{theorem}\label{thm:complowerbound_oracle} Let $G_*$ be a graph with $s$ vertices. 
	Under the statistical query model, if $T\leq d^p$ for some constant $p>0$, $s\leq d^{(1-\eta)/2}$ for some constant $\eta > 0$, and 
	\begin{align}
	\theta \leq \kappa\sqrt{\frac{1}{ n}} \land 
	\frac{1}{16s},
	\end{align}
	where $\kappa$ is some sufficiently small positive constant, then
	\begin{align*}
	    \liminf_{n \rightarrow \infty} \gamma_{\rm{oracle}}\{\cS[\cG_1(G_*), \theta]\} = 1.
	\end{align*}
\end{theorem}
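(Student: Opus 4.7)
The plan is to adapt the Le Cam--type statistical query lower bound argument used in \cite{feldman2015statistical,wang2015sharp,yi2016more} to the Ising clique detection problem. The idea is to exhibit, for every algorithm $\Psi \in \mA(T)$ with $T \leq d^p$, a deterministic oracle whose responses are simultaneously valid under the null $\PP_0$ and under at least one alternative $\PP_{\Theta_\cI}$, so that the algorithm's output has the same distribution under the two hypotheses and its risk is forced to $1$.

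Concretely, I would take the adversarial oracle to be the constant map $Z_q = \EE_0[q(\bX)]$. Under $\PP_0$ this is trivially valid with zero error, so the algorithm produces a deterministic query sequence $q_1, \ldots, q_T$ and a deterministic output $\psi_0$. Under $\PP_{\Theta_\cI}$ the same oracle is valid provided $|\EE_0[q(\bX)] - \EE_{\Theta_\cI}[q(\bX)]| \leq \|q(\bX)\|_{\psi_1}\tau$ holds for every $q \in \cQ_\Psi$. If some $\cI$ satisfies this, then the algorithm again outputs $\psi_0$ under $\PP_{\Theta_\cI}$, giving $\PP_0(\psi_0 = 1) + \PP_{\Theta_\cI}(\psi_0 = 0) = 1$. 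To exhibit such $\cI$ I would draw $\cI$ uniformly at random among all $s$-subsets of $\overline V$, apply Chebyshev's inequality and a union bound over $\cQ_\Psi$. Noting that $\eta(\cQ_\Psi) \asymp \log|\cQ_\Psi|$ and that the hardest case (smallest $\tau$) corresponds to $|\cQ_\Psi|$ polynomial in $d$, this reduces the task to the moment bound
\begin{align*}
\max_{q \in \cQ_\Psi} \frac{\EE_\cI\big[(\EE_{\Theta_\cI}[q(\bX)] - \EE_0[q(\bX)])^2\big]}{\|q(\bX)\|_{\psi_1}^2} \leq \frac{\tau^2}{2|\cQ_\Psi|}.
\end{align*}

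To prove the moment bound I would do Fourier analysis on $\{\pm 1\}^\cI$. The likelihood ratio $L_\cI = d\PP_{\Theta_\cI}/d\PP_0$ depends only on $\bX_\cI$ and equals $2^s$ times the Curie--Weiss density with parameter $\theta$ on $s$ spins (Lemma~\ref{CWN:lemma}). Expanding $L_\cI - 1$ in the characters $\chi_S(\bx) = \prod_{i\in S}x_i$ with $S \subseteq \cI$, only even-size $S$ contribute by the $\bx \leftrightarrow -\bx$ symmetry of Curie--Weiss, and the coefficients $\rho_{|S|}$ are permutation-symmetric Curie--Weiss $|S|$-point correlations. This gives $\EE_{\Theta_\cI}[q] - \EE_0[q] = \sum_{S \subseteq \cI,\, |S|\text{ even},\, |S|\geq 2} \rho_{|S|}\hat q(S)$. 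After squaring and averaging over $\cI$ the cross terms carry $\PP_\cI(S\cup S' \subseteq \cI) \leq (s/d)^{|S\cup S'|}$, which is the crucial sparsity factor. Bounds of the form $|\rho_{2k}| \lesssim (Cs\theta)^k$ in the high-temperature regime $s\theta \leq 1/16$ follow either from the cluster expansion used in the proof of Theorem~\ref{thm:lowerbound} or from the Gaussian coupling of Theorem~\ref{thm:CWN:closeto:Gaussian} applied to the magnetization, combined with the Rademacher moment bounds of Lemmas~\ref{tricky:combinatorial:lemma}--\ref{bound:on:a2m2}. Together with Plancherel's $\sum_S \hat q(S)^2 \leq \|q\|_{L_2(\PP_0)}^2 \lesssim \|q(\bX)\|_{\psi_1}^2$ and Cauchy--Schwarz on the disjoint off-diagonal, the final estimate has leading order $(s^4/d^2)\theta^2 \|q(\bX)\|_{\psi_1}^2$.

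Plugging $\theta^2 \leq \kappa^2/n$, $\tau^2 \asymp \log(|\cQ_\Psi|)/n$, $|\cQ_\Psi| \leq d^{O(p)}$, and $s^4/d^2 \leq d^{-2\eta}$ into the required inequality reduces it to $\kappa^2 |\cQ_\Psi| d^{-2\eta} \lesssim \log|\cQ_\Psi|$, which holds for $\kappa$ a sufficiently small constant depending on $p$ and $\eta$. The main obstacle is the Fourier-analytic control of the off-diagonal cross terms in the expansion of $(\mu_\cI - \mu_0)^2$: a naive Cauchy--Schwarz on $\sum_{S \neq S'}\hat q(S)\hat q(S')$ would cost a combinatorial factor of order $\binom{d}{2}$ and destroy the estimate. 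The argument must therefore separate the contributions according to $|S \cap S'|$ and exploit the $(s/d)^{|S\cup S'|}$ factor for disjoint $(S,S')$, which is precisely where the sparsity assumption $s \leq d^{(1-\eta)/2}$ is fundamentally consumed.
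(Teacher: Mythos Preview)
Your proposal has a genuine gap at the union-bound step, and the gap cannot be repaired with the second-moment machinery you set up.

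First, the claim $|\cQ_\Psi|\leq d^{O(p)}$ is unjustified: in the oracle model of Definition~\ref{def:querymodel} the potential query space $\cQ_\Psi$ is an arbitrary subset of $\cQ^*$ and can be uncountable, even though the algorithm only issues $T\leq d^p$ queries along any realized trajectory. Moreover your ``hardest case'' reasoning is inverted: larger $|\cQ_\Psi|$ increases $\tau$ only logarithmically, while the Chebyshev--union bound you need carries a full factor of $|\cQ_\Psi|$, so the required inequality $\kappa^2\,|\cQ_\Psi|\,d^{-2\eta}\lesssim \log|\cQ_\Psi|$ fails as soon as $|\cQ_\Psi|\gtrsim d^{2\eta}$, regardless of how small a constant $\kappa$ you pick.

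Second, even if you correctly redirect the union bound to the $T$ queries actually asked (this is Lemma~\ref{lemma:distinguish} in the paper), your second-moment estimate $\EE_\cI[(\EE_{\Theta_\cI}q-\EE_0 q)^2]\lesssim (s^4/d^2)\theta^2\|q\|_{\psi_1}^2$ combined with Chebyshev only yields $|\cG(q)|/|\cG^*|\lesssim \kappa^2 d^{-2\eta}$. This polynomial decay cannot absorb $T\leq d^p$ for $p>2\eta$, and the theorem allows arbitrary $p$. The paper overcomes this by a fundamentally different mechanism: Lemma~\ref{lemma:ChiSquareLowerBound} turns the definition of $\cG^+(q)$ into a \emph{lower} bound $\EE_{G,G'\sim U[\cG^+(q)]}|V(G)\cap V(G')|^2>1/(n\theta^2)\geq 1/\kappa^2$ on the average pairwise overlap within $\cG^+(q)$; Lemma~\ref{lemma:graphexpectationboundoraclecomplowerbound} then exploits the clique geometry (the ratio $m_{j+1}/m_j\geq d^\eta$) to show that any subset of $\cG^*$ with such large internal overlap must satisfy $|\cG^+(q)|/|\cG^*|\lesssim d^{-\eta(1/(\kappa\sqrt 2)-2)}$. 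This decay is exponential in $1/\kappa$, which is exactly what is needed to beat $T\leq d^p$ by taking $\kappa$ a small constant depending on $p$ and $\eta$. Your Fourier calculation of the mean-shift variance is a reasonable alternative to the paper's Eulerian-graph bound on $\EE_0[L_\Theta L_{\Theta'}]$, but it is plugged into the wrong inequality; the missing idea is the overlap-counting argument that converts a large-overlap lower bound into a small-cardinality upper bound.
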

\begin{proof}[Proof of Theorem~\ref{thm:complowerbound_oracle}]
	We denote by $G_\emptyset$ the empty graph, Similar to the computational lower bound analysis in Section~\ref{section:complowerbound}, we only need to consider the case where $G_*$ is an $s$-clique. Therefore we set $\cG^*$ to be the set of graphs isomorphic to $G_*$, and let $S^* = \{ \theta \rmA_{G}: G\in \cG^* \}$. Each parameter matrix $\Theta\in S^*$ can be represented by a graph $G\in \cG^*$. In the following, we always denote by $\Theta$ the parameter matrix with underlying graph $G$, and by $\Theta'$ the parameter matrix with underlying graph $G'$. For a graph $G$, in order to successfully detect it with the worst-case oracle, a test has to utilize at least one query $q$ that can distinguish $G$ from $G_\emptyset$. We define
	\begin{align*}
	\cG(q) = \{ G\in \cG^*: | \EE_{\Theta} q(\bX) - \EE_{0} q(\bX) | \geq \|q(\bX)\|_{\psi_1,0} \cdot\tau \},
	\end{align*}
	where $\|q(\bX)\|_{\psi_1,0}$ is the $\psi_1$-norm of $q(\bX)$ when $\bX$ follows the distribution $\PP_0$, and $\tau$ is defined in Definition \ref{oracle}. By the definition of $\cG(q)$, if $T\cdot\sup_{q\in \cQ_{\Psi}} |\cG(q)| < |\cG^*|$, then there must be some $G'\in \cG^*$ such that none of the $T$ queries used by the test can distinguish $G$ from $G_\emptyset$. Therefore the worst case oracle that returns $\EE_{\Theta'} q(\bX)$ when $\bX \sim\PP_0$ can still satisfy Definition \ref{oracle} but will make all the tests powerless. This gives the following lemma. 
	\begin{lemma}\label{lemma:distinguish}
		For any algorithm $\Psi$ that queries the oracle at most $T$ rounds, if $T\cdot\sup_{q\in \cQ_{\Psi}}|\cG(q)| < |\cG^*|$, then there exists an oracle $r_n\in R_n(\cQ_\Psi)$ defined in Definition~\ref{oracle} such that
		$\liminf_{n\rightarrow \infty} \gamma_{\rm{oracle}}(S^*)\geq 1. $
	\end{lemma}
	\begin{proof}
		See Section~\ref{subsec:auxiliaryproof} for a detailed proof.
	\end{proof}
	
	By Lemma~\ref{lemma:distinguish}, to prove $\liminf_{n \rightarrow \infty} \gamma_{\rm{oracle}}\{\cS[\cG_1(G_*), \theta]\} = 1$, it suffices to show that $T\cdot\sup_{q\in \cQ_{\Psi}}|\cG(q)| / |\cG^*|$ is  asymptotically smaller than one. In the rest of the proof, for any $q\in\cQ_{\Psi}$, we derive an upper bound on $|\cG(q)|$. To do so, we first split $\cG(q)$ into two subsets $\cG^+(q)$ and $\cG^-(q)$, which are given by
	\begin{align}
	&\cG^+(q) = \{ G \in \cG^* : \EE_\Theta [q(\bX)] - \EE_0[q(\bX)] > \|q(\bX)\|_{\psi_1,0}\cdot \tau \},\label{eq:Cplus}\\
	&\cG^-(q) = \{ G \in \cG^* : \EE_0 [q(\bX)] - \EE_\Theta[q(\bX)] > \|q(\bX)\|_{\psi_1,0}\cdot \tau \}.\label{eq:Cminus}
	\end{align}
	We now bound $|\cG^+(q)|$. $|\cG^-(q)|$ can be bounded in exactly the same way.
	The following lemma summarizes an inequality derived from the definition \eqref{eq:Cplus}.   
	\begin{lemma}\label{lemma:ChiSquareLowerBound}
		For any query function $q$,
		\begin{align}\label{eq:ChiSquareLowerBound}
		\frac{1}{|\cG^+(q)|^2} \sum_{G,G'\in \cG^+(q)} \EE_0\bigg[\frac{\ud \PP_\Theta}{\ud\PP_{0}} \frac{\ud \PP_{\Theta'}}{\ud\PP_{0}}\bigg] > 1 + \frac{1}{n}.
		\end{align}
	\end{lemma}
	\begin{proof}
		See Section~\ref{subsec:auxiliaryproof} for a detailed proof.
	\end{proof}
	It remains to calculate the left-hand side of \eqref{eq:ChiSquareLowerBound}. By Lemma~\ref{boundpolydiff1}, we have
	\begin{align*}
    \EE_{0 }\bigg[ \frac{\PP_{\Theta}}{\PP_{0 }}\frac{\PP_{\Theta'}}{\PP_{0 }} \bigg] &\leq 1 + | E(G)\cap  E(G')|\theta^2 + \Delta_{G,G'} \theta^3 \\
    &\quad+ \sum _{k\geq 4} q_k[G\oplus G',V(G)\cap V(G')] \theta^k.
    \end{align*}
For $| E(G)\cap  E(G')|$, we use the trivial bound that $| E(G)\cap  E(G')| \leq | V(G)\cap  V(G')|^2/2$. For $q_k[G\oplus G',V(G)\cap V(G')]$, $k\geq 4$, we apply the bound given by Lemma~\ref{boundpolydiff2} and obtain
\begin{align*}
    q_k[G\oplus G',V(G)\cap V(G')] &\leq k\cdot 2^{k-2}\cdot |V(G)\cap V(G')|^2\cdot(\|\rmA_{G\oplus G'}\|_1 \lor \|\rmA_{G\oplus G'}\|_F)^{k-2}\\
    &\leq k\cdot 2^{k-2}\cdot |V(G)\cap V(G')|^2\cdot(2s)^{k-2}\\
    &\leq 2^{k-2}\cdot 2^{k-2}\cdot |V(G)\cap V(G')|^2\cdot(2s)^{k-2}\\
    &= 8^{k-2} \cdot s^{k-2}\cdot |V(G)\cap V(G')|^2.
\end{align*}
Therefore by the assumption that $\theta \leq (16s)^{-1}$, we have
\begin{align*}
    \sum_{k\geq 4} q_k[G\oplus G',V(G)\cap V(G')] \theta^k &\leq 64|V(G)\cap V(G')|^2 s^2\theta^4 \\
    &\leq |V(G)\cap V(G')|^2 \theta^2/4. 
\end{align*}
For $\Delta_{G,G'}$, we use a bound similar to Lemma~\ref{boundpolydiff2} but more specific for cliques. If a triangle has one edge in $E(G)$ and two edges in $E(G')$, then the two vertices of the edge in $E(G)$ must be in $V(G)\cap V(G')$. Therefore, an upper bound of the number of triangles that have one edge in $E(G)$ and two edges in $E(G')$ is given by the following procedure:
\begin{itemize}
	\item Pick an edge $e$ from $E[ G_{ V(G)\cap V(G') } ]$.
	\item Pick a common neighbour of the two vertices of edge $e$.
\end{itemize}
Therefore by the trivial bound $\big|E[ G_{ V(G)\cap V(G') } ]\big|, \big|E[ G'_{ V(G)\cap V(G') } ]\big| \leq |V(G)\cap V(G')|^2/2$, we have
\begin{align*}
    \Delta_{G,G'} &\leq \big|E[ G_{ V(G)\cap V(G') } ]\big| \cdot \|\rmA_{G'}\|_1 + \big|E[ G'_{ V(G)\cap V(G') } ]\big| \cdot \|\rmA_{G}\|_1 \\
    &\leq |V(G)\cap V(G')|^2\cdot s.
\end{align*}
Therefore, we have
\begin{align*}
    \EE_{0 }\bigg[ \frac{\PP_{\Theta}}{\PP_{0 }}\frac{\PP_{\Theta'}}{\PP_{0 }} \bigg] &\leq 1 + | V(G)\cap  V(G')|^2\theta^2/2 + |V(G)\cap V(G')|^2\cdot s \cdot\theta^3 \\
    &\quad + |V(G)\cap V(G')|^2 \theta^2/4\\
    &\leq 1 + | V(G)\cap  V(G')|^2\theta^2.
\end{align*}
	Denote by $U[\cG^+(q)]$ uniformly choosing a graph in $\cG^+(q)$. 
	Then by Lemma~\ref{lemma:ChiSquareLowerBound}, we get
	\begin{align}\label{eq:countCq1}
	\frac{1}{n} &< \frac{1}{|\cG^+(q)|^2} \sum_{G,G'\in \cG^+(q)}  |V (G)\cap V( G')|^2\theta^{2}\\
	&\leq \theta^2 \cdot \sup_{G\in \cG^*} \EE_{G'\sim U[\cG^+(q)]} |V (G)\cap V(G')|^2.\nonumber
	\end{align}
	\eqref{eq:countCq1} gives an lower bound of the expectation defined on the right-hand-side. In the following, we utilize this lower bound to derive an upper bound of $|\cG^+(q)|$. Inspired my similar results given in \cite{fan2018curse,lu2018edge}, we give the following lemma.
	
	\begin{lemma}\label{lemma:graphexpectationboundoraclecomplowerbound}
	    For $j=0,\ldots,s$, define $m_j = \max_{G\in\cG^*}|\{ G'\in \cG^*:|V (G)\cap V( G')|  = s - j\}|$.
	    For $k\leq |\cG^*|$, let $\mG(k) = \{\cG \subseteq \cG^*: |\cG| = k \}$ and $l(k) = \max\{r\leq s: \sum_{j=0}^r m_j \leq k \} $. Then we have
	    \begin{align*}
	        \sup_{G\in \cG^*} \sup_{\cG \in \mG(k)} \EE_{G'\sim U(\cG)} |V (G)\cap V(G')|^2 \leq \frac{\sum_{j=0}^{l(k)} (s-j)^2 m_j}{\sum_{j=0}^{l(k)}  m_j}.
	    \end{align*}
	\end{lemma}
	The intuition of Lemma~\ref{lemma:graphexpectationboundoraclecomplowerbound} is that, among all sets of graphs $\cG$ with cardinality $k$ (i.e., sets of graphs $\cG\in \mG(k)$), the ones that maximize the expectation $\EE_{G'\sim U(\cG)} |V (G)\cap V(G')|^2$ consist of graphs that make $|V (G)\cap V(G')|^2$ as large as possible.
    Let
	$
	\zeta = \inf_{0\le j \le s-1}  m_{j+1} / m_j .
	$
	Then for clique detection problem we have
	\begin{align*}
	\zeta = \inf \frac{m_{j+1}}{m_j} = \inf \left[\binom{s}{s-j-1}\binom{d-s}{j+1} \right]\bigg/ \left[\binom{s}{s-j}\binom{d-s}{j} \right] \geq \frac{d}{s^2} \geq d^{\eta}
	\end{align*}
	Clearly for large enough $d$ we have $\zeta > 2$. Let $h(j)=(s-j)^2$. Then by assumption, for $i<j$ we have
	$
	m_i\zeta^j-m_j\zeta^i<0
	$, $h(i) - h(j)>0$ and therefore $(m_i\zeta^j-m_j\zeta^i)[h(i) - h(j)]\leq0$. Similarly, for $i\geq j$ the same inequality $(m_i\zeta^j-m_j\zeta^i)[h(i) - h(j)]\leq0$ still holds. Therefore we have $\sum_{0\leq i, j\leq l(k)} (m_i\zeta^j-m_j\zeta^i)[h(i) - h(j)] \leq 0$. Rearranging terms gives
	\begin{align}\label{eq:mbound1}
	\frac{ \sum_{j=0}^{l(k)}  h(j)  m_j }{\sum_{j=0}^{l(k)} m_j } \leq \frac{ \sum_{j=0}^{l(k)}  h(j)  \zeta^j }{\sum_{j=0}^{l(k)} \zeta^j } = \frac{ \sum_{j=0}^{l(k)}  h(j)  \zeta^{-(s-j)} }{\sum_{j=0}^{l(k)} \zeta^{-(s-j)} }.
	\end{align} 
	We now bound the right-hand-side of \eqref{eq:mbound1}. Note that $\zeta^{-1} \leq 1/8$ for large enough $d$. For the numerator, we have
	\begin{align*}
	    \sum_{j=0}^{l(k)}  h(j)  \zeta^{-(s-j)} = \sum_{i= s-l(k)}^s i^2 \zeta^{-i} \leq [s-l(k)]^2 \zeta^{-[s-l(k)]} +  \sum_{i= s-l(k)+1}^s i^2 \zeta^{-i}.
	\end{align*}
	Since $s-l(k)+1 \geq 1$, for $i\geq s-l(k)+1 $ we have $i^2 \leq [s-l(k)+1]^2 4^{i - s+l(k)-1}$. Therefore,
	\begin{align*}
	    \sum_{j=0}^{l(k)}  h(j)  \zeta^{-(s-j)} &\leq [s-l(k)+1]^2\zeta^{-[s-l(k)+1]}\cdot \sum_{i= s-l(k)+1}^s (4\zeta^{-1})^{i - s+l(k)-1}\\
	    &\quad + [s-l(k)]^2 \zeta^{-[s-l(k)]} \\
	    & \leq  2[s-l(k)+1]^2\zeta^{-[s-l(k)+1]} + [s-l(k)]^2 \zeta^{-[s-l(k)]}\\
	    &\leq 2[s-l(k)+1]^2\zeta^{-[s-l(k)]}.
	\end{align*}
	For the denominator of the right-hand-side of \eqref{eq:mbound1}, we have
	$ \sum_{j=0}^{l(k)} \zeta^{-(s-j)} \geq \zeta^{-[s - l(k)]}$. 
	Therefore, we have
	\begin{align}\label{eq:mbound2}
	    \frac{ \sum_{j=0}^{l(k)}  h(j)  \zeta^{-(s-j)} }{\sum_{j=0}^{l(k)} \zeta^{-(s-j)}} \leq 2[s-l(k)+1]^2.
	\end{align}
	By \eqref{eq:mbound2}, \eqref{eq:countCq1} and Lemma~\ref{lemma:graphexpectationboundoraclecomplowerbound}, for $k = |\cG^+(q)|$ we have
	\begin{align*}
	    2[s-l(k)+1]^2 \geq \frac{1}{n}.
	\end{align*}
	Therefore, for large enough $d$ we have
	\begin{align}\label{eq: KminuslBound}
	s-l(k) \geq \sqrt{  \frac{1}{2\theta^2n}  } - 1.
	\end{align}
	On the other hand, by the definition of $l(k)$, we have  
	\begin{align}\label{eq:upper_bound_Cq}
	|\cG^+(q)| \leq \sum_{j=0}^{l(k)+1 } m_j \leq  m_s \cdot \sum_{j=0}^{l(k)+1 } \zeta^{j - s} \leq \frac{\zeta^{-[ s  - l(k) -1]} |\cG^*|}{1 - \zeta^{-1}} \leq 2 \zeta^{-[ s  - l(k)- 1 ] } | \cG^*| ,
	\end{align}
	where the last inequality follows from the fact that $\zeta^{-1} \leq 1/2$ for large enough $d$. Plugging \eqref{eq: KminuslBound} into \eqref{eq:upper_bound_Cq} gives
	\begin{align*}
	|\cG^+(q)| \leq 2 |\cG^*| \exp \Bigg[ -\log (\zeta)\cdot \Bigg( \sqrt{  \frac{1}{2\theta^2n}  } -2  \Bigg)  \Bigg].
	\end{align*}
	Applying the same analysis to $ |\cG^-(q)|$, we obtain
	\begin{align*}
	|\cG^-(q)| \leq 2 |\cG^*| \exp \Bigg[ -\log (\zeta)\cdot \Bigg( \sqrt{  \frac{1}{2\theta^2n}  } -2  \Bigg)\Bigg  ].
	\end{align*}
	Therefore we have
	\begin{align*}
	    |\cG(q)| \leq 4 |\cG^*| \exp \Bigg[ -\log (\zeta)\cdot \Bigg( \sqrt{  \frac{1}{2\theta^2n}  } -2  \Bigg)\Bigg  ].
	\end{align*}
	Since the inequality above holds for all $q\in \cQ_{\Psi}$, we have
	\begin{align*}
	T\cdot\frac{\sup_{q \in \cQ_{\Psi}} |\cG(q)|}{|\cG^*|} \leq 4\exp \Bigg[ \log (T) - \Bigg(\sqrt{  \frac{1}{2\theta^2n} } - 2 \Bigg)\cdot \log \zeta    \Bigg].
	\end{align*}
	If $T \leq d^p$, then
	\begin{align*}
	T\cdot\frac{\sup_{q \in \cQ_{\Psi}} |\cG(q)|}{|\cG^*|} \leq \exp \Bigg[\log (4) + p \log (d) - \Bigg(\sqrt{  \frac{1}{2\theta^2n} } - 2 \Bigg)\cdot \log \zeta    \Bigg].
	\end{align*}
	Let $\kappa < [\sqrt{2}(2+p/\eta)]^{-1}$. Then if $\theta\leq \kappa\sqrt{\frac{1}{ n}}$, for large enough $d$ we have
	\begin{align*}
	&\log (4) + p\log (d) - \Bigg(\sqrt{  \frac{1}{2\theta^2n} } - 2 \Bigg)\cdot \log \zeta \\
	&\qquad\quad \leq \log (4) + p\log (d) - \eta\Bigg(\sqrt{  \frac{1}{2\theta^2n} } - 2 \Bigg)\log d \leq -1,
	\end{align*}
	and therefore $T\cdot\sup_{q \in \cQ_{\Psi}} |\cG(q)| /|\cG^*| < 1$.  By Lemma~\ref{lemma:distinguish}, there exists an oracle $r$ such that $\liminf_{n\rightarrow \infty} \gamma_{\rm{oracle}}(S^*)\geq 1$. This completes the proof. 
\end{proof}

\subsection{Proofs of Auxiliary Lemmas}\label{subsec:auxiliaryproof}

\begin{proof}[Proof of Lemma~\ref{lemma:distinguish}]
	We consider an algorithm $\Psi$ with query space $\cQ_{\Psi}$ and $T_\Psi = T$. If $T\cdot\sup_{q\in \cQ_{\Psi}}|\cG(q)| < |\cG^*|$, then for any $T$ queries $q_1,\ldots,q_T \in \cQ_{\Psi}$, there exists $G_0\in \cG \backslash \bigcup_{t = 1}^T \cG(q_t)$. Let $\Theta_0 = \theta \rmA_{G_0}$ be the parameter matrix with underlying graph $G_0$. 
	Then by definition, for $t=1,\ldots, T$ we have
	\begin{align*}
	|\EE_{{\Theta_0}}q_t(\bX) - \EE_{0}q_t(\bX) | \leq \|q_t(\bX)\|_{\psi_1, 0}\cdot \tau.
	\end{align*}
	We set $r$ to be the oracle that returns $Z_{q_t}$ such that
	\begin{align*}
	&\PP_{0}(Z_{q_t} = \EE_{{\Theta_0}} [q_t(\bX)]) = 1,\\
	&\PP_{\Theta}(Z_{q_t} = \EE_{{\Theta}} [q_t(\bX)]) = 1,~G\in \cG_1.
	\end{align*}
	Then clearly
	\begin{align*}
	\PP_{0}( |Z_{q_t} - \EE_{{\Theta_0}} [q_t(\bX)]| \leq \|q_t(\bX)\|_{\psi_1, 0}\cdot \tau_{q_t} ) = 1,
	\end{align*}
	and hence $r$ satisfies the definition~\ref{def:querymodel}. However for $t=1,\ldots,T$, the oracle always returns the same $Z_{q_t}$ under $\PP_0$ and $\PP_{\Theta_0}$. Therefore we have
	\begin{align*}
	\PP_{0}(\psi =1) + \PP_{\Theta_0}(\psi = 0) = 1.
	\end{align*}
	This completes the proof.
\end{proof}

\begin{proof}[Proof of Lemma~\ref{lemma:ChiSquareLowerBound}]
	By \eqref{eq:Cplus}, we have
	\begin{align*}
	\|q( \bX)\|_{\psi_1,0}\cdot \tau &< \frac{1}{|\cG^+(q)|} \sum_{G\in\cG^+(q)} \left\{\EE_\Theta [{q}( \bX)] - \EE_0 [{q}( \bX)]\right\}\\
	&= \EE_0\Bigg\{{q}( \bX)\cdot \frac{1}{|\cG^+(q)|}\sum_{G\in\cG^+(q)}\left[\frac{\ud \PP_\Theta}{\ud\PP_0}( \bX)-1\right] \Bigg\}.
	\end{align*}
	Applying Cauch-Schwartz inequality on the right-hand side above gives
	\begin{align}\label{eq:ChiSquareLowerBound1}
	\|q( \bX)\|_{\psi_1,0}\cdot \tau < \underbrace{ \{ \EE_{0} [{q}^2( \bX)]\}^{1/2}}_{\dr (i)} \cdot \underbrace{\biggl(\EE_0\biggl\{\frac{1}{|\cG^+(q)|}\sum_{G\in\cG^+(q)} \biggl[\frac{\ud \PP_{\Theta}}{\ud \PP_0}( \bX)  - 1\biggr] \biggr\}^2\biggr)^{1/2}}_{\dr (ii)}.
	\end{align}
	For term (i), by the definition of $\psi_1$-norm we have  
	\begin{align}\label{eq:ChiSquareLowerBound2}
	(\EE_{0} \{[{q}( \bX)]^2\})^{1/2}\leq 2\| q( \bX) \|_{\psi_1,0}.  
	\end{align}
	For term (ii), we have 
	\begin{align}\label{eq:ChiSquareLowerBound3}
	&\biggl[\EE_{0}\biggl(\biggl\{\frac{1}{|\cG^+(q)|}\sum_{G\in\cG^+(q)} \biggl[\frac{\ud \PP_{\Theta}}{\ud \PP_0}( \bX)  - 1\biggr] \biggr\}^2\biggr)\biggr]^{1/2} \notag\\
	&\quad = \biggl(\frac{1}{|\cG^+(q)|^2}\sum_{G,G'\in\cG^+(q)}   \EE_{0} \biggl\{ \biggl[ \frac{\ud \PP_{\Theta}}{\ud \PP_0}( \bX)  - 1\biggr] \cdot \biggl[\frac{\ud \PP_{\Theta'}}{\ud \PP_0}( \bX)  - 1\biggr] \biggr\}\biggr)^{1/2}   \notag\\
	&\quad= \biggl\{ \frac{1}{|\cG^+(q)|^2}\sum_{G,G'\in\cG^+(q)}  \EE_{0} \biggl[ \frac{\ud \PP_{\Theta}}{\ud \PP_0}\frac{\ud \PP_{\Theta'}}{\ud \PP_0}( \bX)\biggr] - 1\bigg\} ^{1/2}
	\end{align}
	Plugging \eqref{eq:ChiSquareLowerBound2} and \eqref{eq:ChiSquareLowerBound3} into \eqref{eq:ChiSquareLowerBound1} and using the bound $\tau \geq \sqrt{\frac{1}{n}}$, we obtain 
	\begin{align*}
	\frac{1}{|\cG^+(q)|^2} \sum_{G,G'\in \cG^+(q)} \EE_0\left[\frac{\ud \PP_\Theta}{\ud\PP_0} \frac{\ud \PP_{\Theta'}}{\ud\PP_0}\right] > 1 + \frac{1}{n}.
	\end{align*}
	Therefore we conclude the proof. 
\end{proof}

\begin{proof}[Proof of Lemma~\ref{lemma:graphexpectationboundoraclecomplowerbound}]For any $G\in \cG^*$, we have
\begin{align*}
    \EE_{G'\sim U(\cG)} |V (G)\cap V(G')|^2 = \sum_{j=0}^s (s-j)^2 |\{G'\in \cG: |V(G)\cap V(G')| = s-j\} |.
\end{align*}
	We define
	\begin{align*}
	\overline{m} = k - \sum_{j=0}^{l(k)} m_j.
	\end{align*}
	Note that $h(j) := (s-j)^2$ is a decreasing function of $j$, and $\sum_{G'\in \cG}   |V (G)\cap V( G')|^2$ is a sum of $m_1+\ldots + m_{l(k)} + \overline{m}=k$ terms, with at most $m_j$ terms being $h(j)$. 
	Therefore by \eqref{eq:countCq1}, we have
	\begin{align*}
	\sup_{\cG \in \mG(k)} \EE_{G'\sim U(\cG)} |V (G)\cap V(G')|^2  &\leq  \frac{ \sum_{j=0}^{l(k)} h(j)\cdot m_j  + h[l(k)+1] \cdot \overline{m}}{\sum_{j=0}^{l(k)} m_j + \overline{m}}
	\\
	&\leq \frac{ \sum_{j=0}^{l(k)} h(j)\cdot  m_j }{\sum_{j=0}^{l(k)} m_j }.
	\end{align*}
	This finishes the proof.
\end{proof}

{
\bibliographystyle{ims}
\bibliography{sandwich,graphbib}
}

\end{document}